\providecommand{\U}[1]{\protect\rule{.1in}{.1in}}
\newtheorem{theorem}{Theorem}
\newtheorem{corollary}[theorem]{Corollary}
\newtheorem{definition}[theorem]{Definition}
\newtheorem{example}[theorem]{Example}
\newtheorem{lemma}[theorem]{Lemma}
\newtheorem{proposition}[theorem]{Proposition}
\newtheorem{remark}[theorem]{Remark}
\newenvironment{proof}[1][Proof]{\noindent\textbf{#1.} }{\ \rule{0.5em}{0.5em}}
\begin{document}

\title{ Weak forms of topological and measure theoretical equicontinuity:
relationships with discrete spectrum and sequence entropy}
\date{\vspace{-5ex}}
\author{Felipe Garc\'{\i}a-Ramos\\felipegra@math.ubc.com\\University of British Columbia}
\maketitle

\begin{abstract}
We define weaker forms of topological and measure theoretical equicontinuity
for topological dynamical systems, and we study their relationships with
sequence entropy and systems with discrete spectrum.

We show that for topological systems equipped with ergodic measures having
discrete spectrum is equivalent to $\mu$-mean equicontinuity.

In the purely topological category we show that minimal subshifts with zero
topological sequence entropy are strictly contained in the diam-mean
equicontinuous systems; and that transitive almost automorphic subshifts are
diam-mean equicontinuous if and only if they are regular (i.e. the maximal
equicontinuous factor map is 1-1 on a set of full Haar measure).

For both categories we find characterizations using stronger versions of the
classical notion of sensitivity. As a consequence we obtain a dichotomy
between discrete spectrum and a strong form of measure theoretic sensitivity.

\end{abstract}
\tableofcontents

\section{Introduction}

A topological dynamical system (TDS), $(X,T)$, is a continuous action $T$ on a
compact metric space $X$. The dynamical behaviour of these systems can range
from very rigid to very chaotic. Equicontinuity represents predictability. A
TDS is equicontinuous if the family $\{T^{i}\}$ is equicontinuous, or
equivalently, if whenever two points $x,y\in X$ are close, then $T^{i}%
(x),T^{i}(y)$ stay close for \emph{all} $i$. The prototype for an
equicontinuous TDS is a rotation on a compact abelian group, and it is well
known that any transitive equicontinuous TDS is topologically conjugate to
such a rotation. Sensitive dependence on initial conditions (sensitivity) is
considered a weak form of chaos. Auslander-Yorke \cite{auslanderyorke} showed
that a minimal TDS is either sensitive or equicontinuous. In \cite{Fominst}
Fomin introduced a weaker form of equicontinuity called mean-L-stable (or mean
equicontinuity) which requires that if $x,y\in X$ are close then
$T^{i}(x),T^{i}(y)$ stay close for \emph{many} $i$.

A classical result of Halmos and von Neumann~\cite{halmos1942operator} states
that an ergodic measure-preserving transformation (MPT) $T$ has discrete
spectrum if and only if it is measure-theoretically isomorphic to a rotation
$S$ on a compact abelian group; here, the measure on the group is the Haar
probability measure, and the spectrum refers to the spectrum of the operator
induced by $T$ on $L^{2}$.

Consider the hybrid situation of a TDS that is also an MPT, i.e., a continuous
map $T$ on a compact metric space $X$ endowed with a Borel probability measure
$\mu$ such that $T$ preserves $\mu$. Physical models of systems at very low
temperatures, like quasicrystals, can be modelled by TDS with discrete
spectrum \cite{quasicrystals}.\ If an ergodic TDS $T$ has discrete spectrum,
it is natural to ask how much of the equicontinuity of a rotation, as a TDS,
must be preserved by the isomorphism between $T$ and the rotation.

Gilman~\cite{Gilman1}\cite{Gilman2} introduced a notion of $\mu$%
-equicontinuity for cellular automata and later
Huang-Lu-Ye~\cite{mtequicontinuity} introduced a different definition of $\mu
$-equicontinuity (which under some conditions are equivalent,
see~\cite{mueqtds}) and showed that $\mu$-equicontinuous systems have discrete
spectrum. We introduce a weakening of both $\mu$-equicontinuity and mean
equicontinuity that we call $\mu$-mean equicontinuity, and we show that if
$(X,\mu,T)$ is an ergodic system then it has discrete spectrum if and only if
it is $\mu$-mean equicontinuous (Corollary \ref{principal}).

For this result, we make use of Kushnirenko's characterization of MPT's with
discrete spectrum as those with zero measure-theoretic sequence
entropy~\cite{0036-0279-22-5-R02}.

We also define $\mu-$mean sensitivity, and we show that ergodic topological
systems are either $\mu-$mean equicontinuous or $\mu-$mean sensitive. This
implies that every ergodic TDS $(X,\mu,T)$ is either $\mu-$mean sensitive or
has discrete spectrum (Corollary \ref{principal}). These results can be
interpreted in two different ways. First that systems with discrete spectrum
are predictable in the sense that they are $\mu-$mean equicontinuous; and that
systems that don't have pure discrete spectrum are chaotic in the sense that
the are $\mu-$mean sensitive. As a corollary we can develop a notion of
sensitivity for purely measure preserving transformations. We show that an
ergodic MPT is either measurably sensitive or has discrete spectrum (Theorem
\ref{measuret}).

We may ask if some of these results hold at the topological level. The
topological version of Halmos-Von Neumann Theorem states that for transitive
TDS, equicontinuous maps can be characterized as those with topological
discrete spectrum, i.e., the induced operator on $C(X)$ has discrete
spectrum~(see e.g. \cite{walters2000introduction}). It is easy to see that any
equicontinuous TDS has zero topological entropy. Similar to the
measure-theoretic sequence entropy one can define topological sequence
entropy. A null system is a TDS that has zero topological sequence entropy. It
is well known that equicontinuity implies nullness, but the converse is
false~\cite{goodman1974topological}. Nevertheless, one can ask if there is a
sense in which every null TDS is \textquotedblleft nearly\textquotedblright%
\ equicontinuous. Indeed, in the minimal case there is. Any TDS has a unique
maximal equicontinuous factor \cite{auslander}, and Huang-Li--Shao-Ye
\cite{huang2003null} showed that for any minimal null TDS $(X,T)$, the factor
map from $X$ to its maximal equicontinuous factor is 1-1 on a residual set
(i.e., $(X,T)$ is almost automorphic). We strengthen this result for subshifts
in Corollary \ref{nullquasi}, by showing that the factor map is 1-1 on a set
of full Haar measure (i.e., $(X,T)$ is regular).

In order to establish Corollary \ref{nullquasi}, we introduce another weak
topological form of equicontinuity that we call diam-mean equicontinuity
(stronger than mean equicontinuity). We show that for minimal TDS, nullness
implies a form of diam-mean equicontinuity (Corollary \ref{nullweak}) and that
an almost automorphic subshift is diam-mean equicontinuous if and only if it
is regular (Theorem \ref{aa}).

In conclusion for minimal subshifts we have the following implications:
\begin{align*}
\text{Top. discrete spectrum }  &  =\text{equicontinuity}\\
&  \Longrightarrow\text{nullness}\\
&  \Longrightarrow\text{diam-mean equicontinuity}\\
&  \Longrightarrow\text{mean equicontinuity}\\
&  \Longrightarrow\mu-\text{mean equicontinuity (for every ergodic measure
}\mu\text{)}\\
&  =\text{every ergodic measure has discrete spectrum}%
\end{align*}

In Section 5 we explain how these results can be generalized to amenable
semigroup actions.

Mean equicontinuous $\mathbb{Z-}$systems were recently studied in
\cite{li2013mean}. They independently obtain Theorem \ref{sensi} and they
proved that if $(X,T)$ is mean equicontinuous and transitive and $\mu$ is
ergodic then the system has discrete spectrum. This was an open question from
\cite{scarpellini1982stability}. This result can also be obtained with
Corollary \ref{principal}.

\textbf{Acknowledgements: }I would like to thank Karl Petersen and Tomasz
Downarowicz for conversations that inspired this paper, Brian Marcus for his
suggestions and always enjoyable conversations, and Boris Solomyak, Xiangdong
Ye and the referee for their valuable comments.

The author of this paper was supported by a CONACyT PhD fellowship.

Starting on August 2015 the author is now affiliated to IMPA and University of
Sao Paulo. 

\section{Topological dynamical systems}

A $\mathbb{G-}$\textbf{topological dynamical system (}$\mathbb{G-}%
$\textbf{TDS)} is a pair $(X,T),$ where $X$ is a compact metric space,
$\mathbb{G}$ a semigroup and $T:=\left\{  T^{i}:i\in\mathbb{G}\right\}  $ is a
$\mathbb{G}-$ continuous action on $X.$ If $\mathbb{G=Z}_{+}^{d}$, we simply
say $(X,T)$ is a TDS$.$ In Sections 2, 3 and 4 we use $\mathbb{G}$
$=\mathbb{Z}_{+}^{d}.$ All the results hold for countable discrete abelian
actions; some are more general, see Section 5 for details.

The metric and $\varepsilon-$closed balls on a compact metric space $X$ will
be denoted by $d$ and $B_{\varepsilon}(x)$ respectively.

\bigskip Mathematical definitions of chaos have been widely studied. Most of
them require the system to be sensitive. A TDS $(X,T)$ has \textbf{sensitive
dependence on initial conditions (}or is \textbf{sensitive)} if there exists
$\varepsilon>0$ such that for every open set $A\subset X$ there exists $x,y\in
A$ and $i\in$ $\mathbb{G}$ such that $d(T^{i}x,T^{i}y)>\varepsilon.$ \ On the
other hand equicontinuity represents predictable behaviour. A TDS is
\textbf{equicontinuous} if $T$ is an equicontinuous family. Auslander-Yorke
showed that a minimal TDS is either sensitive or equicontinuous
\cite{auslanderyorke}. A problem with this classification is that
equicontinuity is a strong property and not adequate for subshifts; a subshift
is equicontinuous if and only if it is finite (see for example \cite{mueqca}).

\subsection{Mean equicontinuity and mean sensitivity}

\bigskip In this section we define mean equicontinuity and mean sensitivity
and we adapt Auslander-Yorke's dichotomy to this set up.

\begin{definition}
Let $S\subset\mathbb{G}$. We denote with $F_{n}$ the $n-$cube $\left[
0,n\right]  ^{d}.$ We define the \textbf{lower density of S }as%
\[
\underline{D}(S):=\liminf_{n\rightarrow\infty}\frac{\left\vert S\cap
F_{n}\right\vert }{\left\vert F_{n}\right\vert },
\]
and the \textbf{upper density of S }as
\[
\overline{D}(S):=\limsup_{n\rightarrow\infty}\frac{\left\vert S\cap
F_{n}\right\vert }{\left\vert F_{n}\right\vert }.
\]

\end{definition}

The following properties are easy to prove and will be used throughout the paper.

\begin{lemma}
\label{basico}Let $S,S^{\prime}\subset\mathbb{G}$, $i\in\mathbb{G}$, and
$F\subset\mathbb{G}$ finite set$.$ We have that

$\cdot\underline{D}(S)=\underline{D}(i+S)$ and $\overline{D}(S)=\overline
{D}(i+S).$

$\cdot$\underline{$D$}$(S)+\overline{D}(S^{c})=1.$

$\cdot$If $\underline{D}(S)+\overline{D}(S^{\prime})>1$ then $S\cap S^{\prime
}\neq\emptyset.$

$\cdot\underline{D}(S):=\liminf_{n\rightarrow\infty}\frac{\left\vert S\cap
F_{n}\diagdown F\right\vert }{\left\vert F_{n}\diagdown F\right\vert }$

$\cdot\overline{D}(S):=\limsup_{n\rightarrow\infty}\frac{\left\vert S\cap
F_{n}\diagdown F\right\vert }{\left\vert F_{n}\diagdown F\right\vert }$
\end{lemma}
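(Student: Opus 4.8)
The plan is to reduce every item to elementary counting on the cubes $F_n$, using that $\left\vert F_n\right\vert=(n+1)^d$ grows like $n^d$, so that any additive or boundary correction of order $o(n^d)$ disappears after dividing by $\left\vert F_n\right\vert$ and passing to $\liminf$ or $\limsup$. Throughout I would use two elementary facts about real sequences: that $\liminf_n(1-a_n)=1-\limsup_n a_n$, and that if $b_n\to 0$ then $a_n+b_n$ and $a_n$ have the same $\liminf$ and $\limsup$.

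I would begin with the complementarity identity, since it is the cleanest and drives the intersection property. Because $S\cap F_n$ and $S^c\cap F_n$ partition $F_n$, we have $\left\vert S\cap F_n\right\vert+\left\vert S^c\cap F_n\right\vert=\left\vert F_n\right\vert$, hence $\tfrac{\left\vert S\cap F_n\right\vert}{\left\vert F_n\right\vert}=1-\tfrac{\left\vert S^c\cap F_n\right\vert}{\left\vert F_n\right\vert}$; taking $\liminf$ and applying the first elementary fact gives $\underline{D}(S)=1-\overline{D}(S^c)$. For the intersection property I would argue by contraposition: if $S\cap S'=\emptyset$ then $S'\subset S^c$, so by monotonicity of $\overline{D}$ (immediate from $\left\vert S'\cap F_n\right\vert\le\left\vert S^c\cap F_n\right\vert$) and the identity just proved, $\overline{D}(S')\le\overline{D}(S^c)=1-\underline{D}(S)$, that is $\underline{D}(S)+\overline{D}(S')\le 1$, contradicting the hypothesis $\underline{D}(S)+\overline{D}(S')>1$.

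For the two statements about deleting a finite set $F$, I would note that once $n$ is large enough that $F\subset F_n$ (which happens eventually, as $F$ is finite and $\bigcup_n F_n=\mathbb{G}$), both numerator and denominator change only by the constants $\left\vert S\cap F\right\vert$ and $\left\vert F\right\vert$; thus $\tfrac{\left\vert S\cap F_n\diagdown F\right\vert}{\left\vert F_n\diagdown F\right\vert}=\tfrac{\left\vert S\cap F_n\right\vert-\left\vert S\cap F\right\vert}{\left\vert F_n\right\vert-\left\vert F\right\vert}$, and since $\left\vert F_n\right\vert\to\infty$ this ratio differs from $\tfrac{\left\vert S\cap F_n\right\vert}{\left\vert F_n\right\vert}$ by a quantity tending to $0$, so the $\liminf$ and $\limsup$ are unchanged.

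Finally, for translation invariance I would write $\left\vert(i+S)\cap F_n\right\vert=\left\vert S\cap(F_n-i)\right\vert$ and compare $F_n-i$ with $F_n$: since $\left\vert\,\left\vert S\cap A\right\vert-\left\vert S\cap B\right\vert\,\right\vert\le\left\vert A\triangle B\right\vert$, it suffices to bound $\left\vert(F_n-i)\triangle F_n\right\vert$. This is the only genuine estimate in the lemma and is where I expect the (mild) work to lie: the symmetric difference of two translates of the cube $[0,n]^d$ is a union of boundary slabs of thickness at most $\left\vert i\right\vert$, so its cardinality is $O(n^{d-1})$, which is $o(\left\vert F_n\right\vert)$. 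Dividing by $\left\vert F_n\right\vert$ and passing to the limit then yields $\underline{D}(i+S)=\underline{D}(S)$ and $\overline{D}(i+S)=\overline{D}(S)$, completing the lemma.
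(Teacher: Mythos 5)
Your proof is correct, and all four arguments are the natural ones. The paper itself offers no proof of this lemma (it simply declares the properties easy); the only related argument in the paper, the proof of the first bullet of the general amenable-semigroup version in Section 5, derives translation invariance from invariance of the measure together with the F\o lner condition $\nu\left(\left(i+F_{n}\right)\bigtriangleup F_{n}\right)/\nu(F_{n})\rightarrow0$, and your boundary-slab estimate $\left\vert \left(F_{n}-i\right)\bigtriangleup F_{n}\right\vert =O(n^{d-1})=o(\left\vert F_{n}\right\vert)$ is precisely the concrete verification of that F\o lner condition for the cubes $[0,n]^{d}$, so your route is the same approach instantiated for $\mathbb{Z}_{+}^{d}$.
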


\begin{definition}
Let $(X,T)$ be a TDS. We say $x\in X$ is a \textbf{mean equicontinuous point}
if for every $\varepsilon>0$ there exists $\delta>0$ such that if $y\in
B_{\delta}(x)$ then
\[
\overline{D}(i\in\mathbb{G}:d(T^{i}x,T^{i}y)>\varepsilon)<\varepsilon
\]
(equivalently $\underline{D}(i\in\mathbb{G}:d(T^{i}x,T^{i}y)\leq
\varepsilon)\geq1-\varepsilon).$ We say $(X,T)$ is \textbf{mean equicontinuous
(or mean-L-stable)} if every $x\in X$ is a mean equicontinuous point. We say
$(X,T)$ is \textbf{almost mean equicontinuous }if the set of mean
equicontinuity points is residual.
\end{definition}

Mean equicontinuous systems were introduced by Fomin \cite{Fominst}. They have
been studied in \cite{auslander1959mean}, \cite{oxtoby1952},
\cite{scarpellini1982stability} and \cite{li2013mean}.

\bigskip Using the fact that a continuous function on a compact set is
uniformly continuous we will see that $(X,T)$ is mean equicontinuous if and
only if it is uniformly mean equicontinuous i.e. for every $\varepsilon>0$
there exists $\delta>0$ such that if $d(x,y)\leq\delta$ then $\overline
{D}(i\in\mathbb{G}:d(T^{i}x,T^{i}y)>\varepsilon)<\varepsilon$ (see Remark
\ref{uniform}).

\begin{definition}
We denote the set of mean equicontinuity points by $E^{m}$ and we define
\[
E_{\varepsilon}^{m}:=\left\{  x\in X:\exists\delta>0\text{ }\forall y,z\in
B_{\delta}(x),\text{ }\underline{D}\left\{  i\in\mathbb{G}:d(T^{i}%
y,T^{i}z)\leq\varepsilon\right\}  \geq1-\varepsilon\right\}  .
\]

\end{definition}

Note that $E^{m}=\cap_{\varepsilon>0}E_{\varepsilon}^{m}.$

\begin{lemma}
\label{invariant}Let $(X,T)$ be a TDS. The sets $E^{m}$, $E_{\varepsilon}^{m}$
are inversely invariant (i.e. $T^{-j}(E^{m})\subseteq E^{m}$, $T^{-j}%
(E_{\varepsilon}^{m})\subseteq E_{\varepsilon}^{m}$ for all $j\in\mathbb{G)}$
and $E_{\varepsilon}^{m}$ is open.
\end{lemma}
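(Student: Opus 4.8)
The plan is to prove three separate assertions about the sets $E^m$ and $E_\varepsilon^m$: inverse invariance of each, and openness of $E_\varepsilon^m$. I would handle them in that order, since the invariance arguments are nearly identical and the openness is essentially immediate from the definition.

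First I would establish inverse invariance of $E_\varepsilon^m$, which is the heart of the matter. Fix $j\in\mathbb{G}$ and suppose $x\in E_\varepsilon^m$; I must show $T^{-j}(x)$—more precisely any $x'$ with $T^j(x')=x$—lies in $E_\varepsilon^m$. Take $x'$ with $T^j x' = x$ and let $\delta>0$ be the witness for $x$. By continuity of $T^j$ at $x'$ there is $\delta'>0$ with $T^j(B_{\delta'}(x'))\subseteq B_\delta(x)$. Now given $y',z'\in B_{\delta'}(x')$, the points $y:=T^j y'$ and $z:=T^j z'$ lie in $B_\delta(x)$, so the defining inequality gives $\underline D\{i:d(T^i y,T^i z)\le\varepsilon\}\ge 1-\varepsilon$. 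The key observation is that $d(T^i y,T^i z)=d(T^{i+j}y',T^{i+j}z')$, so the set $\{i:d(T^i y,T^i z)\le\varepsilon\}$ is exactly the $j$-translate of $S':=\{i:d(T^i y',T^i z')\le\varepsilon\}$, intersected appropriately. By the first bullet of Lemma \ref{basico}, translation preserves lower density, so I can conclude $\underline D(S')\ge 1-\varepsilon$, witnessing $x'\in E_\varepsilon^m$ with the same $\delta'$. Taking the intersection over $\varepsilon>0$ and using $E^m=\cap_\varepsilon E_\varepsilon^m$ immediately yields inverse invariance of $E^m$.

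The translation bookkeeping is the one place requiring care: shifting the index by $j$ sends $\{i\in\mathbb{G}:\dots\}$ to a translate, but a translate of a subset of $\mathbb{G}=\mathbb{Z}_+^d$ need not stay inside $\mathbb{G}$. This is precisely why Lemma \ref{basico} is invoked in its stated form, and why the last two bullets (densities are unchanged when one deletes a finite set $F$ from the window $F_n$) are useful: the finitely many indices lost or gained at the boundary under the translation do not affect the liminf. I would phrase the density comparison so that the discrepancy between $S'$ and the translate is contained in a fixed finite set, then appeal to those bullets. This is the step I expect to be the main (though still routine) obstacle.

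Finally, openness of $E_\varepsilon^m$ is direct from the definition. If $x\in E_\varepsilon^m$ with witness $\delta$, then for any $x''\in B_{\delta/2}(x)$ I claim $\delta/2$ witnesses membership of $x''$: indeed $B_{\delta/2}(x'')\subseteq B_\delta(x)$, so every pair $y,z\in B_{\delta/2}(x'')$ already satisfies the required density bound. Hence $B_{\delta/2}(x)\subseteq E_\varepsilon^m$, so $E_\varepsilon^m$ is open. Note this openness argument does not transfer to $E^m$, consistent with the lemma only asserting openness of $E_\varepsilon^m$.
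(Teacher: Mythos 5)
Your proof is correct and follows essentially the same route as the paper's: pull back a ball under $T^{j}$ by continuity, apply the defining property of $E_{\varepsilon}^{m}$ at the image point, shift the index set by $j$ using the translation invariance of lower density from Lemma \ref{basico}, and use the identical $\delta/2$ argument for openness. If anything, you are more explicit than the paper, which states the density estimate only for pairs involving the center point $T^{j}x$ and leaves the translation-of-densities step implicit.
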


\begin{proof}
Let $j\in\mathbb{G},$ $\varepsilon>0,$ and $x\in T^{-j}E_{\varepsilon}^{m}.$
There exists $\eta>0$ such that if $d(T^{j}x,z)\leq\eta$ then $\underline{D}%
\left\{  i:d(T^{i+j}x,T^{i}z)\leq\varepsilon\right\}  \geq1-\varepsilon.$
There exists $\delta>0$ such that if $d(x,y)<\delta$ then $d(T^{j}%
x,T^{j}y)<\eta$ (and hence $\underline{D}\left\{  i:d(T^{i+j}x,T^{i+j}%
y)\leq\varepsilon\right\}  \geq1-\varepsilon).$ We conclude that $x\in
E_{\varepsilon}^{m}.$ This implies $E^{m}$ is also inversely
invariant.\newline

Let $x\in E_{\varepsilon}^{m}$ and $\delta>0$ be a constant that satisfies the
property of the definition of $E_{\varepsilon}^{m}.$ If $d(x,w)<\delta/2$ then
$w\in E_{\varepsilon}^{m}.$ Indeed if $y,z\in B_{\delta/2}(w)$ then $y,z\in
B_{\delta}(x).$
\end{proof}

\begin{definition}
A TDS $(X,T)$ is \textbf{mean sensitive} if there exists $\varepsilon>0$ such
that for every open set $U$ there exist $x,y\in U$ such that%
\[
\overline{D}(i\in\mathbb{G}:d(T^{i}x,T^{i}y)>\varepsilon)>\varepsilon.
\]

\end{definition}

\begin{definition}
Let $(X,T)$ be a TDS. We say $(X,T)$ is \textbf{transitive} if for every open
sets $U$ and $V$ there exists $i\in\mathbb{G}$ such that $T^{i}U\cap
V\neq\emptyset.$

We say $x\in X$ is a \textbf{transitive point} if $\left\{  T^{i}%
x:i\in\mathbb{G}\right\}  $ is dense. If every $x\in X$ is transitive then we
say the system is \textbf{minimal}.
\end{definition}

If $(X,T)$ is transitive then $X$ contains a residual set of transitive
points. If $X$ has no isolated points and $(X,T)$ has a transitive point then
$(X,T)$ is transitive \cite{kolyada334some}. If $(X,T)$ is sensitive then $X$
has no isolated points.

It is not hard to see that mean sensitive systems have no mean equicontinuity
points, as a matter of fact we have the following dichotomies.

\begin{theorem}
\label{sensi}A transitive system is either almost mean equicontinuous or mean
sensitive. A minimal system is either mean equicontinuous or mean sensitive.
\end{theorem}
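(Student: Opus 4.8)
The plan is to reduce everything to the behavior of the open, inversely-invariant sets $E_\varepsilon^m$ from Lemma \ref{invariant}. First I would record two elementary facts. The sets are nested, $E_{\varepsilon_1}^m \subseteq E_{\varepsilon_2}^m$ whenever $\varepsilon_1 \le \varepsilon_2$, so that $E^m = \bigcap_n E_{1/n}^m$ is a countable intersection; this lets me apply Baire category on the equicontinuous side. Second, by Lemma \ref{basico}, the statement $x \notin E_\varepsilon^m$ unwinds to: every ball $B_\delta(x)$ contains points $y,z$ with $\overline{D}\{i : d(T^i y, T^i z) > \varepsilon\} > \varepsilon$ (negate $\underline{D}\ge 1-\varepsilon$ and use $\underline{D}(S)+\overline{D}(S^c)=1$). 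This translates emptiness of some $E_\varepsilon^m$ directly into mean sensitivity.

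The heart of the argument is the following dichotomy for each fixed $\varepsilon$: in a transitive system $E_\varepsilon^m$ is either empty or dense, and in a minimal system it is either empty or all of $X$. For the transitive statement I would combine inverse invariance with the density of transitive points: if $E_\varepsilon^m$ is nonempty and open, then the dense orbit of any transitive point $x$ meets it, say $T^j x \in E_\varepsilon^m$, whence $x \in T^{-j}E_\varepsilon^m \subseteq E_\varepsilon^m$; since the transitive points are dense, so is $E_\varepsilon^m$. For the minimal statement I would instead pass to the complement, which is closed and forward invariant, and recall that in a minimal system the orbit closure of any of its points is all of $X$, so a nonempty closed forward-invariant set must equal $X$; hence $E_\varepsilon^m$ is empty or $X$.

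With this in hand the theorem follows by splitting into cases. If every $E_{1/n}^m$ is dense, then $E^m=\bigcap_n E_{1/n}^m$ is a dense $G_\delta$, so the mean equicontinuity points are residual and the system is almost mean equicontinuous; in the minimal case each $E_{1/n}^m = X$, so $E^m = X$ and the system is mean equicontinuous. Otherwise some $E_\varepsilon^m$ fails to be dense, hence is empty by the dichotomy, so every point fails the defining property; then for any open $U$ I pick $x\in U$ and $\delta$ with $B_\delta(x)\subseteq U$ and extract $y,z\in U$ with $\overline{D}\{i : d(T^i y, T^i z) > \varepsilon\} > \varepsilon$, which is precisely mean sensitivity with constant $\varepsilon$.

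I anticipate the main obstacle to be the empty-or-dense lemma, since it is exactly what lets me avoid transporting witnessing pairs from the non-equicontinuous region into an arbitrary open set. The delicate point is that it is inverse invariance, not forward invariance, that makes a dense orbit detect membership in $E_\varepsilon^m$: one must line up the direction of the inclusion $T^{-j}E_\varepsilon^m\subseteq E_\varepsilon^m$ with the statement that the orbit enters the open set. Once that is arranged, the two regimes (residual versus empty) are manifestly exhaustive, and the dichotomy drops out.
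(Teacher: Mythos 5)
Your proof is correct and follows essentially the same route as the paper's: both hinge on Lemma \ref{invariant} (openness and inverse invariance of $E_{\varepsilon}^{m}$) to get the empty-or-dense dichotomy under transitivity, then conclude residuality of $E^{m}=\cap_{n}E_{1/n}^{m}$ by Baire category when every $E_{1/n}^{m}$ is nonempty, and unwind emptiness of some $E_{\varepsilon}^{m}$ (via $\underline{D}(S)+\overline{D}(S^{c})=1$) into mean sensitivity, with minimality upgrading dense to all of $X$ through inverse invariance. The only cosmetic differences are that you obtain density of $E_{\varepsilon}^{m}$ from the dense set of transitive points rather than applying transitivity directly to the pair of open sets $\left(X\setminus\overline{E_{\varepsilon}^{m}},\,E_{\varepsilon}^{m}\right)$, and you dispense with the paper's preliminary case split appealing to Akin--Auslander for non-sensitive systems, which is indeed logically unnecessary.
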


\begin{proof}
Let $(X,T)$ be a transitive TDS.

If $(X,T)$ is not sensitive then by \cite{AkinAuslander} it is almost
equicontinuous and hence almost mean equicontinuous.

Let $(X,T)$ be a sensitive TDS (hence $X$ has no isolated points). We will
show that $E_{\varepsilon}^{m}$ is either empty or dense.

Assume $E_{\varepsilon}^{m}$ is non-empty and not dense. Then $U=X\diagdown
\overline{E_{\varepsilon}^{m}}$ is a non-empty open set. Since the system is
transitive and $E_{\varepsilon}^{m}$ is open (Lemma \ref{invariant}) there
exists $t\in\mathbb{G}$ such that $U\cap T^{-t}(E_{\varepsilon}^{m}$ $)$ is
non empty. By Lemma \ref{invariant} we have that $U\cap T^{-t}(E_{\varepsilon
}^{m}$ $)\subset U\cap E_{\varepsilon}^{m}=\emptyset;$ a contradiction.
\newline If $E_{\varepsilon}^{m}$ is non-empty for every $\varepsilon>0$ then
we have that $E^{m}=\cap_{n\geq1}E_{1/n}^{m}$ is a residual set; hence the
system is almost mean equicontinuous.

If there exists $\varepsilon>0$ such that $E_{\varepsilon}^{m}$ is empty, then
for any open ball $U=B_{\delta}(x)$ there exist $y,z\in B_{\delta}(x)$ such
that $\underline{D}\left\{  i\in\mathbb{G}:d(T^{i}y,T^{i}z)\leq\varepsilon
\right\}  \leq1-\varepsilon;$ this means that $\overline{D}\left\{
i\in\mathbb{G}:d(T^{i}y,T^{i}z)>\varepsilon\right\}  >\varepsilon.$ It follows
that $(X,T)$ is mean sensitive.

Now suppose $(X,T)$ is minimal and almost mean equicontinuous. For every $x\in
X$ and every $\varepsilon>0$ there exists $t\in\mathbb{G}$ such that
$T^{t}x\in E_{\varepsilon}^{m}.$ Since $E_{\varepsilon}^{m}$ is inversely
invariant $x\in E_{\varepsilon}^{m}$ and hence $x\in E^{m}.$
\end{proof}

An analogous result appeared in \cite{li2013mean} for $\mathbb{G=Z}_{+}$.

It will be useful to describe mean equicontinuity in terms of the Besicovitch pseudometric.

\begin{definition}
\label{besi}We define $\Delta_{\delta}(x,y):=\left\{  i\in\mathbb{G}:\text{
}d\mathbb{(}T^{i}x,T^{i}y)>\delta\right\}  $ and the \textbf{Besicovitch
pseudometric} as $d_{b}(x,y):=\inf\left\{  \delta>0:\overline{D}%
(\Delta_{\delta}(x,y))<\delta\right\}  .$ By identifying points that are at
pseudo-distance zero we obtain a metric space $\left(  X/d_{b},d_{b}\right)  $
that will be called the \textbf{Besicovitch space. }The projection
$f_{b}:(X,d)\rightarrow(X/d_{b},d_{b})$ will be called the \textbf{Besicovitch
projection. }The $\varepsilon-$closed balls of the Besicovitch pseudometric
will be denoted by $B_{\varepsilon}^{b}(x).$
\end{definition}

One can check that in fact this is a pseudometric using that $\overline
{D}(S)+\overline{D}(S^{\prime})\geq\overline{D}(S\cup S^{\prime})$.

It is not difficult to see that if $x\in X$ is a mean equicontinuous point
then $f_{b}$ is continuous at $x.$ This implies the Besicovitch projection is
continuous if and only if $(X,T)$ is mean equicontinuous.

\begin{remark}
\label{uniform}If $(X,T)$ is mean equicontinuous then $f_{b}$ is continuous
and hence $f_{b}$ is uniformly continuous; this means that $(X,T)$ is
uniformly mean equicontinuous i.e. for every $\varepsilon>0$ there exists
$\delta>0$ such that if $d(x,y)\leq\delta$ then \underline{$D$}$(i\in
\mathbb{G}:d(T^{i}x,T^{i}y)\leq\varepsilon)\geq1-\varepsilon.$
\end{remark}

\begin{remark}
The Besicovitch pseudometric is sometimes expressed with an equivalent metric
using averages. For example if $\mathbb{G=Z}_{+}$ then%
\[
d_{b}(x,y):=\limsup_{n\rightarrow\infty}\frac{1}{n}\sum_{i=1}^{n}%
d(T^{i}x,T^{i}y).
\]

\end{remark}

In \cite{blanchard1998cellular} equicontinuity with respect to the Besicovitch
pseudometric (of the shift) was studied for cellular automata; this is a
different property than mean equicontinuity.

It is well known that transitive equicontinuous systems are minimal. We give a
similar result by weakening one hypothesis and strengthening the other.

\begin{definition}
A TDS $(X,T)$ is \textbf{strongly transitive} if for every open set $U$ there
exists a transitive point $x\in U$ that returns to $U$ with positive lower density.
\end{definition}

\begin{theorem}
\label{sensitivity}Every strongly transitive mean equicontinuous system is minimal.
\end{theorem}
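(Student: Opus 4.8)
The plan is to show that every point $z\in X$ is transitive, which is equivalent to showing that for every nonempty open set $V$ the forward orbit $\{T^i z:i\in\mathbb{G}\}$ meets $V$. Fix such a $z$ and $V$. First I would shrink $V$: choose $v\in V$ and $\rho>0$ with $B_{2\rho}(v)\subseteq V$, and set $V'=\{p:d(p,v)<\rho\}$, a nonempty open subset of $V$ with the property that any point lying within $\rho$ of a point of $V'$ still belongs to $V$. The whole argument then reduces to producing a single $j\in\mathbb{G}$ with $T^j z\in V$.

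The key idea is to apply strong transitivity not to a neighborhood of $z$ but to the target set $V'$ itself. This yields a transitive point $x'\in V'$ that returns to $V'$ with positive lower density, i.e. the return set $R=\{i:T^i x'\in V'\}$ satisfies $\underline{D}(R)=c>0$. Now I would invoke uniform mean equicontinuity (Remark \ref{uniform}): fixing any $\gamma$ with $0<\gamma<c$ and applying it with tolerance $\min(\rho,\gamma)$ produces a $\delta>0$ such that $d(a,b)\le\delta$ forces $\underline{D}\{j:d(T^j a,T^j b)\le\rho\}\ge 1-\gamma$. Since $x'$ is transitive its orbit is dense, so there is some $n$ with $d(T^n x',z)\le\delta$; applying the previous inequality to the pair $(T^n x',z)$ gives a set $G=\{j:d(T^{n+j}x',T^j z)\le\rho\}$ with $\underline{D}(G)\ge 1-\gamma$. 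On the other hand the shifted return set $W=\{j:T^{n+j}x'\in V'\}$ differs from $R$ by a translation and a finite set, so by the translation- and finite-modification-invariance clauses of Lemma \ref{basico} we have $\underline{D}(W)=c$, hence $\overline{D}(W)\ge c$. Because $\underline{D}(G)+\overline{D}(W)\ge (1-\gamma)+c>1$, the intersection criterion of Lemma \ref{basico} yields some $j\in G\cap W$; for this $j$ we have $T^{n+j}x'\in V'$ and $d(T^{n+j}x',T^j z)\le\rho$, whence $T^j z\in V$ by the choice of $V'$.

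The step I expect to be the main obstacle is exactly the choice of which set to feed into strong transitivity. The naive route---take a transitive point near $z$ and push its single, guaranteed visit to $V$ across to $z$ via mean equicontinuity---fails, because one visit carries zero density and can never overcome the density-$\gamma$ error coming from mean equicontinuity in the intersection lemma. Strong transitivity is what repairs this: applied to $V'$ it manufactures a point whose visits to $V'$ have positive lower density $c$, and this positive density is precisely the surplus needed to beat $\gamma$ in $\underline{D}(G)+\overline{D}(W)>1$. The remaining care is bookkeeping: ensuring $\gamma<c$ is chosen before $\delta$, handling the backward shift of $R$ by $n$ through the finite-set clauses of Lemma \ref{basico}, and keeping the tolerance $\rho$ consistent with the shrinking $V'\subseteq V$.
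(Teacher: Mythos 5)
Your proof is correct and is essentially the paper's own argument: both apply strong transitivity to (a shrinking of) the target ball to obtain a transitive point whose returns have positive lower density $c$, bring that point's orbit $\delta$-close to the arbitrary point via transitivity, and then combine (uniform) mean equicontinuity with the density-intersection bullet of Lemma \ref{basico} to produce a common time landing the arbitrary point's orbit in the target. The only minor caveat is that for $\mathbb{G}=\mathbb{Z}_{+}^{d}$ with $d\geq2$ the difference $R\setminus(n+\mathbb{G})$ is not finite but merely of upper density zero (which still suffices); on the other hand, your explicit choice of $\gamma<c$ handles the strict inequality needed in the intersection lemma more carefully than the paper's own use of the tolerance $\min\{\varepsilon/2,a\}$.
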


\begin{proof}
Let $x,y\in X$ and $\varepsilon>0.$ Since the system is strongly transitive
there exists a transitive point $z\in B_{\varepsilon/2}(y)$ such that
$a:=\underline{D}\left\{  i:T^{i}z\in B_{\varepsilon/2}(y)\right\}  >0.$ Since
the system is mean equicontinuous there exists $\delta>0$ such that if $w\in
B_{\delta}(x)$ then $d_{b}(x,w)\leq\min\left\{  \varepsilon/2,a\right\}  .$
There exists $t_{1}\in\mathbb{G}$ such that $T^{t_{1}}z\in B_{\delta}(x).$ By
Lemma \ref{basico} (third bullet) there exists $t_{2}\in\mathbb{G}$ such that
$T^{t_{2}}z\in B_{\varepsilon/2}(y)$ and $d(T^{t_{2}}x,T^{t_{2}}%
z)\leq\varepsilon/2;$ thus $T^{t_{2}}x\in B_{\varepsilon}(y).$ This means the
system is minimal.
\end{proof}

A similar result is known for a null system ( Definition
\ref{topologicalsequence}), i.e.\ every Banach transitive null system is
minimal \cite{huang2003null}. It is an open question whether every transitive
null $\mathbb{Z}_{+}-$system is minimal (see \cite{huang2003null} and
\cite{glasner2009local}).

\section{Measure theoretical results}

Measure theoretical equicontinuity for TDS with respect to Borel probability
measures has been studied in ~\cite{Gilman1}, \cite{Cadre2005375},
\cite{mtequicontinuity} and \cite{mueqtds}. A natural question is to ask how
this concept relates to other known forms of rigidity for ergodic systems, for
example discrete spectrum (see Definition \ref{ds}). In
\cite{mtequicontinuity} it was shown that $\mu$-equicontinuous systems have
discrete spectrum. However, the converse is not true; for example Sturmian and
regular Toeplitz subshifts (equipped with their unique ergodic measure) are
not $\mu-$equicontinuous but have discrete spectrum.

In this section we introduce $\mu-$mean equicontinuity (a measure theoretical
form of mean equicontinuity) and $\mu-$mean sensitivity. The main result of
this section states that an ergodic TDS has discrete spectrum if and only if
it is $\mu-$mean equicontinuous if and only if it is not $\mu-$mean sensitive
(Corollary \ref{principal}).

\bigskip A $\mathbb{G}$\textbf{\ -measure preserving transformation
($\mathbb{G-}$MPT)} is a triplet $(M,\mu,T)$ where $(M,\mu)$ is a standard
probability space and $T:=\left\{  T^{i}:i\in\mathbb{G}\right\}  $ is a
$\mathbb{G}-$ measure preserving action on $M.$ When we say a system is
ergodic we mean that it is measure preserving and ergodic.

\subsection{$\bigskip\mu-$Mean equicontinuity}

We denote Borel probability measures of $X$ by $\mu$ and we define
$\mathcal{B}_{X}^{+}:=\left\{  A\text{ is Borel}:\mu(A)>0\right\}  .$

\begin{definition}
Let $(X,T)$ be a TDS and $\mu$ a (not necessarily invariant) Borel probability
measure on $X.$ We say $(X,T)$ is $\mu-$\textbf{mean equicontinuous} if for
every $\kappa>0$ there exists a compact set $M$ such that $\mu(M)>1-\kappa$
and $T\shortmid_{M}$is mean equicontinuous i.e. for every $x\in M$ and every
$\varepsilon>0$ there exists $\delta>0$ such that if $y\in B_{\delta}(x)\cap
M$ then $\overline{D}(i\in\mathbb{G}:d(T^{i}x,T^{i}y)>\varepsilon
)<\varepsilon$ (this implies that $f_{b}\shortmid_{M}$ is continuous)$.$
\end{definition}

\begin{remark}
By approximation arguments we could equivalently define $\mu-$mean
equicontinuity by asking that $M$ is simply Borel (and not necessarily compact).
\end{remark}

This definition may remind the reader of Lusin's theorem. In fact we can use
this to obtain information about $\mu-$mean equicontinuous systems.

\begin{definition}
Let $X$ be a compact metric space, $\mu$ a Borel probability measure on $X$,
and $Y$ a metric space$.$

A set $A\subset X$ is $\mu-$\textbf{measurable} if $A$ is in the sigma-algebra
generated by the completion of $\mu.$

A function $f:X\rightarrow Y$ $\ $is $\mu-$\textbf{measurable }if for every
Borel set $B$ we have that $f^{-1}(B)$ is $\mu-$measurable.

A function $f:X\rightarrow Y$ is $\mu-$\textbf{Lusin (}or \textbf{Lusin
measurable}) if for every $\kappa>0$ there exists a compact set $M\subset X$
such that $\mu(M)>1-\kappa$ and $f\mid_{M}$ is continuous.
\end{definition}

\bigskip It is not difficult to see that $(X,T)$ is $\mu-$mean equicontinuous
if and only if $f_{b}$ (see Definition \ref{besi}) is $\mu-$Lusin.

Every $\mu-$Lusin function is $\mu-$measurable. The converse is true if $Y$ is
separable (Lusin's theorem); this fact is generalized in the following result.

\begin{theorem}
[Lusin's theorem \cite{simonnet1996measures} pg. 63/145]\label{simon}Let $X$
be a compact metric space, $\mu$ a Borel probability measure on $X$, $Y$ a
metric space$,$ and $f:X\rightarrow Y$ be a function such that there exist
$X^{\prime}\subset X$ such that $\mu(X^{\prime})=1$ and $f(X^{\prime})$ is
separable. We have that $f$ is $\mu-$Lusin if and only if for every open ball
$B,$ $f^{-1}(B)$ is $\mu-$measurable.
\end{theorem}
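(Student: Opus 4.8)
The forward implication is immediate: the excerpt already records that every $\mu$-Lusin function is $\mu$-measurable, so in particular $f^{-1}(B)$ is $\mu$-measurable for every open set, and a fortiori for every open ball $B$. The whole content of the statement is therefore the converse, and I would prove it by the classical Lusin argument of approximating $f$ by simple (locally constant) functions, adapted so that only the separability of $f(X')$, rather than of all of $Y$, is ever used. The point is that $\mu$ is carried by the full-measure set $X'$, so every measure-theoretic estimate can be made with balls centered in $f(X')$.

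Fix $\kappa>0$. Since $\mu(X')=1$ and $f(X')$ is separable, for each $n$ I would choose a countable family of open balls $\{B_{n,k}\}_{k}$ of radius $1/n$, centered at points $y_{n,k}$ of a fixed countable dense subset of $f(X')$, whose union covers $f(X')$. By hypothesis each $A_{n,k}:=f^{-1}(B_{n,k})$ is $\mu$-measurable, and $\bigcup_k A_{n,k}\supseteq X'$ has full measure. Disjointifying via $A_{n,k}':=A_{n,k}\setminus\bigcup_{j<k}A_{n,j}$ yields pairwise disjoint $\mu$-measurable sets of full total measure with $f(A_{n,k}')\subseteq B_{n,k}$. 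Now I would invoke inner regularity: a finite Borel measure on a compact metric space is inner regular with respect to compact sets, and this passes to the $\mu$-completion; hence for each $n$ I can select finitely many indices $k\le K_n$ and compact sets $C_{n,k}\subseteq A_{n,k}'$ so that the finite disjoint union $M_n:=\bigsqcup_{k\le K_n}C_{n,k}$ satisfies $\mu(X\setminus M_n)<\kappa/2^{n+1}$. Setting $M:=\bigcap_n M_n$, the set $M$ is compact and $\mu(X\setminus M)\le\sum_n\mu(X\setminus M_n)<\kappa$, so $\mu(M)>1-\kappa$.

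It remains to verify that $f\shortmid_M$ is continuous, which is where the construction pays off. For each $n$ define $g_n\colon M_n\to Y$ to be the map equal to the constant $y_{n,k}$ on $C_{n,k}$. Because the sets $C_{n,k}$ with $k\le K_n$ are finitely many pairwise disjoint compact sets, they lie at mutually positive distance, so $g_n$ is continuous; and for $x\in C_{n,k}$ we have $f(x)\in B_{n,k}$, whence $d(f(x),g_n(x))<1/n$ on $M_n\supseteq M$. Thus $g_n\shortmid_M\to f\shortmid_M$ uniformly, and since a uniform limit of continuous maps into a metric space is continuous, $f\shortmid_M$ is continuous. As $\kappa>0$ was arbitrary, $f$ is $\mu$-Lusin.

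The step I expect to demand the most care is the bookkeeping that makes each $M_n$ a \emph{finite} and \emph{disjoint} union of compact sets with controlled measure loss: one must truncate the countable disjoint decomposition to finitely many pieces and then shrink each piece to a compact subset while keeping the total error below $\kappa/2^{n+1}$, so that the losses still sum to less than $\kappa$ after intersecting over all $n$. Finiteness and disjointness are precisely what force the approximants $g_n$ to be continuous rather than merely measurable, so the delicate point is to retain both properties on the common refinement $M=\bigcap_n M_n$; the non-separability of $Y$ itself causes no trouble, since all relevant balls are centered in the separable set $f(X')$.
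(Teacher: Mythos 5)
Your proposal addresses a statement that the paper itself never proves: Theorem \ref{simon} is imported with a citation to Simonnet's book (pg.\ 63/145), so there is no internal proof to compare against, and your argument has to be judged on its own. Judged that way, it is correct and complete. The forward direction is fine as you state it, resting on the fact recorded in the paper that every $\mu$-Lusin function is $\mu$-measurable; making that fact self-contained is routine (if $M_n$ is compact with $\mu(M_n)>1-1/n$ and $f\mid_{M_n}$ continuous, then for open $B$ each $f^{-1}(B)\cap M_n$ is Borel, and $f^{-1}(B)\setminus\bigcup_n M_n$ sits inside a null set, so $f^{-1}(B)$ lies in the completion). Your converse is the classical Lusin construction, correctly adapted to the stated generality: the only preimages you ever need measurable are those of balls of radius $1/n$ centered at a countable dense subset of $f(X')$, which is exactly what the hypothesis provides, so the possible non-separability of $Y$ never intervenes. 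Each step checks out: the disjointification keeps $f(A_{n,k}')\subseteq B_{n,k}$; inner regularity of a finite Borel measure on a compact metric space with respect to compact sets does pass to the $\mu$-completion (replace a completion-measurable set by a Borel subset of the same measure, then approximate); truncating to finitely many indices and shrinking to compacts gives $\mu(X\setminus M_n)<\kappa 2^{-(n+1)}$, whence $M=\bigcap_n M_n$ is compact with $\mu(M)>1-\kappa$; finitely many pairwise disjoint compact sets are at positive mutual distance, so each locally constant $g_n$ is continuous on $M_n$; and the uniform limit theorem for maps into a metric space gives continuity of $f\mid_M$. This is in all likelihood essentially the proof in the cited reference, so nothing further is needed.
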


\begin{remark}
\label{este}Since
\[
d_{b}(x,y)=\inf\left\{  \delta>0:\limsup_{n\rightarrow\infty}\frac{\left\vert
\left\{  i\in\mathbb{G\mid}\text{ }d\mathbb{(}T^{i}x,T^{i}y)>\delta\right\}
\cap F_{n}\right\vert }{\left\vert F_{n}\right\vert }<\delta\right\}  ,
\]
and $\mu$ is Borel, $d_{b}(x,y)$ is a Borel function. This implies that for
every $\varepsilon>0$ and every $x\in X$ we have that $B_{\varepsilon}^{b}(x)$
is $\mu-$measurable.
\end{remark}

\begin{proposition}
\label{separ}Let $(X,T)$ be a TDS and $\mu$ a Borel probability measure. We
have that $(X,T)$ is $\mu-$mean equicontinuous if and only if there exists
$X^{\prime}\subset X$ such that $\mu(X^{\prime})=1$ and $(X^{\prime}\diagup
d_{b},d_{b})$ is separable.
\end{proposition}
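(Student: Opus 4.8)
The plan is to prove the two implications separately, exploiting the characterization (stated just before the proposition) that $(X,T)$ is $\mu$-mean equicontinuous if and only if the Besicovitch projection $f_b:(X,d)\to(X/d_b,d_b)$ is $\mu$-Lusin. The cleanest route is to connect Lusin measurability of $f_b$ with the separability of its image on a full-measure set, and for this I would lean directly on the generalized Lusin theorem (Theorem \ref{simon}) together with the Borel-measurability facts recorded in Remark \ref{este}.

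For the forward direction, suppose $(X,T)$ is $\mu$-mean equicontinuous, i.e. $f_b$ is $\mu$-Lusin. Then for each $n$ there is a compact set $M_n$ with $\mu(M_n)>1-1/n$ on which $f_b$ is continuous. Set $X^{\prime}=\bigcup_n M_n$, so $\mu(X^{\prime})=1$. On each $M_n$ the map $f_b$ is continuous, and $M_n$ is a compact (hence separable) metric space, so its continuous image $f_b(M_n)$ is separable in $(X/d_b,d_b)$. A countable union of separable subspaces of a metric space is separable, so $f_b(X^{\prime})=\bigcup_n f_b(M_n)$ is separable; this is exactly the statement that $(X^{\prime}/d_b,d_b)$ is separable (identifying $f_b(X^{\prime})$ with $X^{\prime}/d_b$).

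For the converse, suppose there is $X^{\prime}$ with $\mu(X^{\prime})=1$ such that $f_b(X^{\prime})$ is separable. I want to apply Theorem \ref{simon} with $f=f_b$ and $Y=X/d_b$. The hypotheses of that theorem require a full-measure set whose image is separable, which is precisely $X^{\prime}$, and that the preimage $f_b^{-1}(B)$ of every open ball be $\mu$-measurable. The open balls in the Besicovitch space are the sets $B_\varepsilon^b(x)$, and Remark \ref{este} records that $d_b(x,\cdot)$ is a Borel function, so each $f_b^{-1}(B_\varepsilon^b(x))=B_\varepsilon^b(x)$ is $\mu$-measurable. Hence Theorem \ref{simon} yields that $f_b$ is $\mu$-Lusin, which is the definition of $\mu$-mean equicontinuity.

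The main subtlety to handle carefully is the measurability hypothesis in the converse: an open ball in $(X/d_b,d_b)$ centered at an arbitrary point $\xi=f_b(x)$ pulls back to a set of the form $\{y:d_b(x,y)<\varepsilon\}$, and I must be sure these are the ``open balls'' Theorem \ref{simon} refers to and that their preimages are genuinely $\mu$-measurable rather than merely Borel in each fiber. Remark \ref{este} supplies the needed joint Borel measurability of $(x,y)\mapsto d_b(x,y)$ (or at least of $d_b(x,\cdot)$ for fixed $x$), so the pullback of a ball is $\mu$-measurable, and I expect this bookkeeping to be the only real obstacle; the separability arguments on both sides are routine once the Lusin characterization is invoked.
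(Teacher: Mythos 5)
Your proof is correct and takes essentially the same route as the paper: one direction applies the generalized Lusin theorem (Theorem \ref{simon}) with $f=f_b$, using Remark \ref{este} for the measurability of ball preimages, and the other takes $X'=\bigcup_n M_{1/n}$ from the Lusin property of $f_b$. You merely spell out the details (continuous images of compact sets are separable, countable unions of separable sets are separable) that the paper's terse proof leaves implicit.
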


\begin{proof}
Define $f:=f_{b}$.

If there exists $X^{\prime}\subset X$ such that $\mu(X^{\prime})=1$ and
$(X^{\prime}\diagup d_{b},d_{b})$ is separable apply Theorem \ref{simon} to
obtain that $f_{b}$ is $\mu-$Lusin and hence $(X,T)$ is $\mu-$mean equicontinuous.

If $f_{b}$ is $\mu-$Lusin it means that for every $\kappa>0$ there exists a
compact set $M_{\kappa}\subset X$ such that $\mu(M_{\kappa})>1-\kappa$ and
$f_{b}\mid_{M_{\kappa}}$ is continuous. This implies that $X^{\prime}%
=\cup_{n\in\mathbb{N}}M_{1/n}$ $\ $satisfies the desired conditions.
\end{proof}

Under some circumstances we can describe $\mu-$mean equicontinuous systems
using $\mu-$mean equicontinuity points.

\begin{definition}
\bigskip We say $x\in X$ is a $\mu-$\textbf{mean equicontinuous point} if for
every $\varepsilon>0$%
\[
\lim_{\delta\rightarrow0}\frac{\mu(B_{\delta}(x)\cap B_{\varepsilon}^{b}%
(x))}{\mu(B_{\delta}(x))}=1.
\]

\end{definition}

We can apply Theorem 16 in \cite{mueqtds} to obtain the following result.

\begin{theorem}
\label{meaneq}Let $(X,T)$ be a TDS and $\mu$ a Borel probability measure.
Consider the following properties.

1)$(X,T)$ is $\mu-$mean equicontinuous.

2) Almost every $x\in X$ is a $\mu-$mean equicontinuous point.

If $(X,\mu)$ satisfies the Lebesgue density theorem then $1)\Longrightarrow
2).$

If $(X,\mu)$ is Vitali (i.e. satisfies Vitali's covering theorem) then
$2)\Longrightarrow1).$
\end{theorem}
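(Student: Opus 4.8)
The plan is to prove the two implications separately, in each case passing back and forth between the Besicovitch pseudometric and the measure-theoretic data via the cited result. The statement to establish relates $\mu$-mean equicontinuity (equivalently, by the discussion above, that $f_b$ is $\mu$-Lusin) to the pointwise density condition defining $\mu$-mean equicontinuity points, under two different regularity hypotheses on $(X,\mu)$. The key observation throughout is that a point $x$ is a $\mu$-mean equicontinuity point exactly when, for every $\varepsilon>0$, the Besicovitch ball $B_\varepsilon^b(x)$ fills up the metric balls $B_\delta(x)$ in density as $\delta\to 0$; and that $f_b$ being continuous at $x$ (in the relevant measure-theoretic sense) is governed by how the preimages of Besicovitch balls sit inside small metric balls.

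First I would set up the dictionary between the abstract Theorem 16 of \cite{mueqtds} and the present situation. That theorem presumably says, for a general $\mu$-measurable (or Lusin) function $g$ into a metric space, that under the Lebesgue density theorem the Lusin property forces almost every point to be a density-continuity point of $g$, while under the Vitali covering theorem the converse holds. So the entire proof reduces to verifying that $g:=f_b$ satisfies the hypotheses of that theorem and that its notion of ``density-continuity point'' coincides with the condition $\lim_{\delta\to 0}\mu(B_\delta(x)\cap B_\varepsilon^b(x))/\mu(B_\delta(x))=1$. For the latter I would unwind the definition of $\mu$-mean equicontinuity point: saying $f_b$ is ``density continuous'' at $x$ means that for every Besicovitch-neighborhood of $f_b(x)$ — i.e.\ every $B_\varepsilon^b(x)$ — the metric balls $B_\delta(x)$ are eventually concentrated in that neighborhood, which is precisely the displayed limit. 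The measurability hypothesis needed to invoke \cite{mueqtds} is supplied by Remark \ref{este}, which guarantees each $B_\varepsilon^b(x)$ is $\mu$-measurable since $d_b$ is Borel.

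For the implication $1)\Rightarrow 2)$ I would argue as follows. Assume $(X,T)$ is $\mu$-mean equicontinuous, so $f_b$ is $\mu$-Lusin; by Proposition \ref{separ} there is $X'$ with $\mu(X')=1$ and $f_b(X')$ separable. Now invoke Theorem 16 of \cite{mueqtds} in the direction that uses the Lebesgue density theorem: since $f_b$ is Lusin and its range is essentially separable, almost every point is a point of density-continuity, which by the dictionary above is exactly a $\mu$-mean equicontinuity point. For the implication $2)\Rightarrow 1)$, I would assume almost every point is a $\mu$-mean equicontinuity point and apply the other half of Theorem 16, which under the Vitali covering hypothesis turns the pointwise density condition back into the Lusin property of $f_b$; separability of the range is again available, or follows since a set of full measure consisting of density-continuity points of a measurable function into a metric space lands in a separable subspace.

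The main obstacle I expect is not the dynamics but the careful matching of hypotheses: verifying that $f_b$, as a function into the (possibly nonseparable) Besicovitch space, genuinely meets the framework of Theorem 16 in \cite{mueqtds}, and that the ``continuity in density'' notion used there is literally the limit in the definition of a $\mu$-mean equicontinuity point. I would also need to confirm that invoking the Lebesgue density theorem and Vitali's covering theorem as \emph{hypotheses on $(X,\mu)$} is exactly what that cited theorem requires, rather than some stronger metric-measure condition. Once the translation is pinned down, both implications follow immediately by citation, so the real content is the bookkeeping in the two preceding paragraphs.
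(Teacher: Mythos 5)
Your proposal is correct and is essentially the paper's own argument: the paper proves this theorem precisely by applying Theorem 16 of \cite{mueqtds} to the Besicovitch projection $f_{b}$, with measurability of the balls $B_{\varepsilon}^{b}(x)$ supplied by Remark \ref{este} and the identification of $\mu$-mean equicontinuity with the $\mu$-Lusin property of $f_{b}$ (and of $\mu$-mean equicontinuity points with density-continuity points of $f_{b}$) serving as the translation. The bookkeeping you describe is exactly the intended, and only, content of the proof.
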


If $X$ is a Cantor space or $X\subset\mathbb{R}^{d}$ then for every Borel
measure $\mu,$ $(X,\mu)$ is Vitali and satisfies the Lebesgue density theorem.
For more information see \cite{mueqtds}.

Measure theoretic equicontinuity can been studied under non invariant
measures; for example the existence and ergodicity of limit measures of $\mu
-$equicontinuous cellular automata was studied in \cite{mueqca}. From now on
we will only study measure preserving systems.

\subsection{$\mu-$Mean sensitivity}

Measure theoretic forms of sensitivity have been studied in \cite{Gilman1},
\cite{Cadre2005375} and \cite{mtequicontinuity}. In particular in
\cite{mtequicontinuity} it was shown that ergodic TDS are either $\mu
-$equicontinuous or $\mu-$sensitive.

We also show that $\mu-$mean equicontinuity is a measurable invariant property
for TDS (this is not satisfied by $\mu-$equicontinuous TDS).

\begin{definition}
A TDS $(X,T)$ is $\mu-$\textbf{mean sensitive} if there exists $\varepsilon>0$
such that for every $A\in\mathcal{B}_{X}^{+}$ there exists $x,y\in A$ such
that $d_{b}(x,y)>\varepsilon$ (and hence $\overline{D}\left\{  i\in
\mathbb{G}:d(T^{i}x,T^{i}y)>\varepsilon\right\}  >\varepsilon)$. In this case
we say $\varepsilon$ is a $\mu-$\textbf{mean sensitivity constant. }
\end{definition}

\begin{definition}
\label{expansive}A TDS $(X,T)$ is $\mu-$\textbf{mean expansive} if there
exists $\varepsilon>0$ such that $\mu\times\mu\left\{  (x,y):d_{b}%
(x,y)>\varepsilon\right\}  =1.$
\end{definition}

The following fact is well known. We give a proof for completeness.

\begin{lemma}
\label{separable}Let $\mathcal{(}Y,d_{Y})$ be a metric space$.$ Suppose that
there is no uncountable set $A\subset Y$ and $\varepsilon>0$ such that
$d_{Y}(x,y)>\varepsilon$ for every $x,y\in A$ with $x\neq y,$ then $(Y,d_{Y})
$ is separable.
\end{lemma}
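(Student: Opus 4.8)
The plan is to produce a countable dense subset of $Y$ by assembling maximal $\varepsilon$-separated sets across a sequence of scales $\varepsilon\to 0$; the hypothesis controls the size of each such set, and maximality upgrades it into a net. First, fix $n\in\mathbb{N}$ and call a set $A\subset Y$ strictly $(1/n)$-separated if $d_Y(x,y)>1/n$ for all distinct $x,y\in A$. The family of all strictly $(1/n)$-separated subsets of $Y$, ordered by inclusion, satisfies the Zorn hypothesis, since the union of a chain of strictly $(1/n)$-separated sets is again strictly $(1/n)$-separated (any two distinct points already lie together in some member of the chain). Thus I would fix, for each $n$, a maximal strictly $(1/n)$-separated set $A_n$.

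Second, I would apply the assumption of the lemma with $\varepsilon=1/n$: since no strictly $(1/n)$-separated set is uncountable, each $A_n$ is countable, and therefore $D:=\bigcup_{n\in\mathbb{N}}A_n$ is a countable subset of $Y$.

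Third, I would verify that $D$ is dense. Given $y\in Y$ and $n\in\mathbb{N}$, maximality of $A_n$ forces $d_Y(y,a)\le 1/n$ for some $a\in A_n$; otherwise $A_n\cup\{y\}$ would be a strictly larger strictly $(1/n)$-separated set, contradicting maximality. Hence every $y\in Y$ is within $1/n$ of a point of $A_n\subset D$ for every $n$, so $y$ lies in the closure of $D$. Since $y$ was arbitrary, $D$ is dense and $(Y,d_Y)$ is separable.

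The essential content is the packing-versus-covering duality captured by maximal separated sets: the hypothesis bounds the packing number (keeping each $A_n$ countable), while maximality makes $A_n$ a $(1/n)$-net. The only points needing care are the Zorn's lemma application—checking that chain unions preserve the separation property—and the density step, where one notes that a point at distance $>1/n$ from all of $A_n$ could be adjoined. Both are routine, so I anticipate no genuine obstacle; if one wished to avoid Zorn entirely, the contrapositive could instead be argued by a transfinite greedy construction of an uncountable $\varepsilon$-separated set from a failure of separability, but the maximal-net argument is cleaner.
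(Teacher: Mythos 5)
Your proof is correct and follows essentially the same route as the paper's: both take maximal $\varepsilon$-separated sets via Zorn's lemma over a countable family of scales (you use $\varepsilon = 1/n$, the paper uses rational $\varepsilon$), invoke the hypothesis to make each countable, and use maximality to show their union is dense. You additionally spell out the chain-union check for Zorn's lemma and the density argument, which the paper leaves implicit.
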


\begin{proof}
For every $\varepsilon>0$ rational we define:%
\[
\mathcal{F}_{\varepsilon}:=\left\{  A\subset Y:d_{Y}(x,y)>\varepsilon\text{
}\forall x\neq y\in A\right\}  .
\]

Using Zorn's lemma we obtain that $\mathcal{F}_{\varepsilon}$ admits a maximal
element $M_{\varepsilon}$, which by hypothesis must be countable. Then
$M:=\cup_{\varepsilon\in\mathbb{Q}_{+}}M_{\varepsilon}$ is also countable. We
have that for every $x\in X$ and $\varepsilon>0$ there exists $y\in M$ such
that $d_{Y}(x,y)\leq\varepsilon.$
\end{proof}

\begin{lemma}
\label{ecu}Let $(X,\mu,T)$ be an ergodic TDS. Then $f(x):=\mu(B_{\varepsilon
}^{b}(x))\ $is constant for almost every $x\in X$ and equal to $\mu\times
\mu\left\{  (x,y):d_{b}(x,y)\leq\varepsilon\right\}  .$
\end{lemma}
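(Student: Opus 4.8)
The plan is to write $f$ as a fiber integral of an indicator on $X\times X$ and to show it is invariant under the action, so that ergodicity forces it to be constant, after which Tonelli's theorem identifies the value.

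Observe first that $B_{\varepsilon}^{b}(x)=\left\{y:d_{b}(x,y)\leq\varepsilon\right\}$, so setting $g(x,y):=\mathbf{1}[d_{b}(x,y)\leq\varepsilon]$ we have $f(x)=\int_{X}g(x,y)\,d\mu(y)$. By Remark \ref{este} the map $d_{b}\colon X\times X\to\mathbb{R}$ is Borel, hence $g$ is a bounded product-measurable function; by Tonelli's theorem $f$ is $\mu$-measurable and the double integral $\int\int g\,d\mu\,d\mu=(\mu\times\mu)\left\{(x,y):d_{b}(x,y)\leq\varepsilon\right\}$ is well defined.

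The key step is to establish that $d_{b}$ is invariant under the action, i.e. $d_{b}(T^{i}x,T^{i}y)=d_{b}(x,y)$ for every $i\in\mathbb{G}$. I would start from the identity $i+\Delta_{\delta}(T^{i}x,T^{i}y)=\Delta_{\delta}(x,y)\cap(i+\mathbb{G})$, so that by shift-invariance of upper density (Lemma \ref{basico}, first bullet) $\overline{D}(\Delta_{\delta}(T^{i}x,T^{i}y))=\overline{D}(\Delta_{\delta}(x,y)\cap(i+\mathbb{G}))$. A direct count gives $\underline{D}(i+\mathbb{G})=1$, whence $\overline{D}(\mathbb{G}\setminus(i+\mathbb{G}))=0$ by the second bullet of Lemma \ref{basico}; combining this with subadditivity of $\overline{D}$ yields $\overline{D}(\Delta_{\delta}(x,y)\cap(i+\mathbb{G}))=\overline{D}(\Delta_{\delta}(x,y))$. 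Comparing the defining infima then gives $d_{b}(T^{i}x,T^{i}y)=d_{b}(x,y)$.

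With this invariance in hand, for each $i\in\mathbb{G}$ I would change variables via $y\mapsto T^{i}y$, which is legitimate because $T^{i}$ preserves $\mu$, to get $f(T^{i}x)=\int\mathbf{1}[d_{b}(T^{i}x,T^{i}y)\leq\varepsilon]\,d\mu(y)$, and then apply the invariance of $d_{b}$ to conclude $f(T^{i}x)=\int\mathbf{1}[d_{b}(x,y)\leq\varepsilon]\,d\mu(y)=f(x)$. Thus $f$ is a measurable function invariant under every element of the action, so by ergodicity it equals a constant $c$ for $\mu$-almost every $x$. Integrating and applying Tonelli's theorem gives $c=\int_{X}f\,d\mu=(\mu\times\mu)\left\{(x,y):d_{b}(x,y)\leq\varepsilon\right\}$, as claimed. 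The main obstacle I anticipate is precisely the invariance of $d_{b}$ in the $\mathbb{Z}_{+}^{d}$ setting: unlike the one-dimensional case, $\mathbb{G}\setminus(i+\mathbb{G})$ is infinite, so one cannot simply delete a finite set and must instead argue that this remainder has density zero; everything else (measurability and the ergodic-averaging identification of the constant) is routine once the invariance is secured.
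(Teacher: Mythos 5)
Your proof is correct and takes essentially the same route as the paper: Remark \ref{este} for measurability of $d_{b}$, Fubini/Tonelli to identify $\int_{X}f\,d\mu$ with $\mu\times\mu\left\{ (x,y):d_{b}(x,y)\leq\varepsilon\right\}$, and ergodicity applied to the invariant function $f$. The only difference is that the paper simply asserts ``$f$ is $T$-invariant,'' whereas you supply the verification --- the identity $i+\Delta_{\delta}(T^{i}x,T^{i}y)=\Delta_{\delta}(x,y)\cap(i+\mathbb{G})$, the fact that $\mathbb{G}\setminus(i+\mathbb{G})$ has upper density zero in $\mathbb{Z}_{+}^{d}$, and the measure-preserving change of variables --- which correctly fills in the step the paper leaves implicit.
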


\begin{proof}
By Remark \ref{este} $d_{b}(x,y)$ is $\mu\times\mu-$measurable. This means
that $\left\{  (x,y):d_{b}(x,y)\leq\varepsilon\right\}  $ is $\mu\times\mu
-$measurable for every $\varepsilon>0$. Using Fubini's Theorem we obtain that
\begin{align*}
\mu\times\mu\left\{  (x,y):d_{b}(x,y)<\varepsilon\right\}   &  =\int_{X}%
\int_{X}1_{\left\{  (x,y):d_{b}(x,y)\leq\varepsilon\right\}  }d\mu(y)d\mu(x)\\
&  =\int_{X}\mu\left\{  y:d_{b}(x,y)\leq\varepsilon\right\}  d\mu(x)\\
&  =\int_{X}\mu(B_{\varepsilon}^{b}(x))d\mu(x).
\end{align*}

Since $f$ is $T$-invariant we conclude that $f(x)$ is constant for almost
every $x\in X$ and equal to $\mu\times\mu\left\{  (x,y):d_{b}(x,y)<\varepsilon
\right\}  .$
\end{proof}

\begin{theorem}
\label{strongsensitive}Let $(X,\mu,T)$ be an ergodic TDS. The following are equivalent:

$1)$ $(X,T)$ is $\mu-$mean sensitive.

$2)$ $(X,T)$ is $\mu-$mean expansive$.$

$3)$ There exists $\varepsilon>0$ such that for almost every $x,$
$\mu(B_{\varepsilon}^{b}(x))=0.$

$4)(X,T)$ is not $\mu-$mean equicontinuous.
\end{theorem}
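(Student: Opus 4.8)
The plan is to prove all four statements equivalent by closing the cycle $(2)\Leftrightarrow(3)\Rightarrow(1)\Rightarrow(4)\Rightarrow(3)$, using Lemma \ref{ecu} for the measure-theoretic bookkeeping and Lemma \ref{separable} together with Proposition \ref{separ} to pass between separability of the Besicovitch space and $\mu$-mean equicontinuity. I would first dispose of $(2)\Leftrightarrow(3)$, which is little more than a restatement of Lemma \ref{ecu}. For a fixed $\varepsilon$ that lemma gives an a.e. constant value $c_\varepsilon:=\mu(B_\varepsilon^b(x))$ equal to $\mu\times\mu\{(x,y):d_b(x,y)\le\varepsilon\}$. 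Since $\mu$-mean expansiveness with constant $\varepsilon$ asserts precisely $\mu\times\mu\{(x,y):d_b(x,y)>\varepsilon\}=1$, i.e. $\mu\times\mu\{d_b\le\varepsilon\}=0$, it holds if and only if $c_\varepsilon=0$, which is exactly $(3)$.

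Next, for $(2)\Rightarrow(1)$ I would use a Fubini-type positivity argument. If $\mu\times\mu\{d_b>\varepsilon\}=1$, then for any Borel $A$ with $\mu(A)>0$ we have $\mu\times\mu(A\times A)=\mu(A)^2>0$, so (the set $\{d_b>\varepsilon\}$ being measurable by Remark \ref{este}) the intersection $(A\times A)\cap\{d_b>\varepsilon\}$ has positive measure and is in particular nonempty; choosing a point $(x,y)$ in it yields $x,y\in A$ with $d_b(x,y)>\varepsilon$, which is $\mu$-mean sensitivity with constant $\varepsilon$.

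For $(1)\Rightarrow(4)$ I would argue the contrapositive. If $(X,T)$ is $\mu$-mean equicontinuous, then by Proposition \ref{separ} there is a full-measure $X'$ with $(X'/d_b,d_b)$ separable. Given any $\varepsilon>0$, a countable dense subset of $X'/d_b$ lifts to representatives $(x_n)\subset X'$ with $X'\subseteq\bigcup_n B_\varepsilon^b(x_n)$; each ball is $\mu$-measurable by Remark \ref{este}, so some $B_\varepsilon^b(x_n)$ has positive measure, and by the triangle inequality any two of its points are at $d_b$-distance at most $2\varepsilon$. Thus for every $\varepsilon$ there is a positive-measure set of $d_b$-diameter at most $2\varepsilon$, which contradicts $\mu$-mean sensitivity (choose $\varepsilon$ smaller than half the sensitivity constant); hence sensitivity forces non-equicontinuity.

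Finally, $(4)\Rightarrow(3)$ is the step I expect to be the main obstacle, since it converts a failure of separability into a measure contradiction while respecting that the constancy in Lemma \ref{ecu} holds only almost everywhere. Again I would take the contrapositive: assuming $(3)$ fails, so $c_\varepsilon>0$ for every $\varepsilon>0$, I would set $X':=\bigcap_n\{x:\mu(B_{1/n}^b(x))=c_{1/n}\}$, a countable intersection of full-measure sets, so $\mu(X')=1$. To apply Lemma \ref{separable} to $(X'/d_b,d_b)$, suppose toward a contradiction that some uncountable $A\subset X'$ is $\varepsilon$-separated; choosing $n$ with $1/n<\varepsilon/2$ makes the balls $\{B_{1/n}^b(x)\}_{x\in A}$ pairwise disjoint, and each has measure exactly $c_{1/n}>0$ (this is precisely where membership in $X'$ is essential, so that the a.e. constant is actually attained at every center), which is impossible for a probability measure. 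Hence no such $A$ exists, $(X'/d_b,d_b)$ is separable, and Proposition \ref{separ} gives $\mu$-mean equicontinuity, i.e. the negation of $(4)$. Together these implications close the equivalence of $(1)$ through $(4)$.
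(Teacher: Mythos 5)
Your proof is correct, and it leans on the same three pillars as the paper's: the Fubini/ergodicity Lemma \ref{ecu}, the Zorn-type Lemma \ref{separable}, and the separability characterization of Proposition \ref{separ}; your $(2)\Rightarrow(1)$ and $(4)\Rightarrow(3)$ are essentially the paper's arguments (your $(4)\Rightarrow(3)$ in fact fills in details the paper leaves implicit: the paper asserts disjointness of the $\varepsilon$-balls centered at an $\varepsilon$-separated set, where one really needs radius at most $\varepsilon/2$, and it never spells out why disjointness plus equal positive measure is absurd — your choice of $1/n<\varepsilon/2$ and the explicit appeal to membership in $X'$ so that the a.e.\ constant is attained at every center is exactly the right repair). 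Where you genuinely diverge is in how equicontinuity is shown incompatible with the sensitive side. The paper proves $2)\Rightarrow4)$: from the Lusin-type definition it extracts a compact $M$ of positive measure on which $f_{b}$ is uniformly continuous, covers $M$ by finitely many small metric balls, and plays a positive-measure ball against expansiveness. You instead prove $(1)\Rightarrow(4)$ contrapositively through Proposition \ref{separ}: separability of $(X'/d_{b},d_{b})$ gives a countable cover of a full-measure set by $d_{b}$-balls, one of which has positive measure and $d_{b}$-diameter at most $2\varepsilon$, contradicting sensitivity. Your route avoids compactness and uniform continuity entirely (everything is routed through separability, used in both directions), at the cost of invoking Proposition \ref{separ} twice; the paper's route keeps the Lusin picture in play, which is closer in spirit to the definition but requires the finite-cover/uniform-continuity bookkeeping. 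The two decompositions close the cycle differently — yours is $(2)\Leftrightarrow(3)$, $(2)\Rightarrow(1)\Rightarrow(4)\Rightarrow(3)$, the paper's is $2)\Rightarrow1)\Rightarrow3)\Rightarrow2)\Rightarrow4)\Rightarrow3)$ — but both are complete, and your direct proof of $(2)\Leftrightarrow(3)$ as a restatement of Lemma \ref{ecu} is a small simplification over the paper's detour through $1)$.
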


\begin{proof}
$2)\Rightarrow1)$

Let $A\in B_{X}^{+}.$ This means that $A\times A\in B_{X^{2}}^{+}.$ By
hypothesis we can find $(x,y)\in A\times A$ such that $\overline{D}\left\{
i:d(T^{i}x,T^{i}y)>\varepsilon\right\}  >\varepsilon.$

$1)\Rightarrow3)$

Suppose $(X,T)$ is $\mu-$mean sensitive (with $\mu-$mean sensitivity constant
$\varepsilon)$ and that $3)$ is not satisfied. This means there exists $x\in
X$ such that $B_{\varepsilon/3}^{b}(x)\in B_{X}^{+}.$ For any $y,z\in
B_{\varepsilon/3}^{b}(x)$ we have that $d_{b}(y,z)<\varepsilon.$ This
contradicts the assumption that $T$ is $\mu-$mean sensitive.

$3)\Rightarrow2)$

Using Lemma \ref{ecu} we obtain that $\mu\times\mu\left\{  (x,y):d_{b}%
(x,y)\leq\varepsilon\right\}  =0.$

$2)\Rightarrow4)$

If $(X,T)$ is $\mu-$mean expansive then there exists $\varepsilon>0$ such that
$\mu\times\mu\left\{  (x,y):d_{b}(x,y)>\varepsilon\right\}  =1.$ Suppose
$(X,T)$ is $\mu-$mean equicontinuous. This implies there exists a compact set
$M$ such that $\mu(M)$ $>0$ and $\left.  f_{b}\right\vert _{M}$ is continuous
(and hence uniformly continuous). This implies there exists $\delta>0$ such
that if $x,y\in M$ and $d(x,y)\leq\delta$ then $d_{b}(x,y)\leq\varepsilon/2.$
We can cover $M$ with finitely many $\delta/2-$balls, this implies one of them
must have positive measure. Using this and $\mu-$mean expansiveness we would
obtain that there exists $p,q\in M$ such that $d(p,q)\leq\delta$ and
$d_{b}(p,q)>\varepsilon;$ a contradiction to the continuity of $\left.
f_{b}\right\vert _{M}.$

$4)\Rightarrow3)$

Suppose $3)$ is not satisfied. By Lemma \ref{ecu} we have that for every
$n\in\mathbb{N}$ there exists a set of full measure $Y_{n}$ and $a_{n}>0$ such
that $\mu(B_{1/n}^{b}(x))=a_{n}$ for all $x\in Y_{n}.$ Let $Y:=\cap
_{n\in\mathbb{N}}Y_{n}.$ If $(Y\diagup d_{b},d_{b})$ is not separable then by
Lemma \ref{separable} there exists $\varepsilon>0$ and an uncountable set $A$
such that for every $x,y\in A$ such that $x\neq y$ we have that
$B_{\varepsilon}^{b}(x)\cap B_{\varepsilon}^{b}(y)=\emptyset.$ This is a
contradiction, hence $(Y\diagup d_{b},d_{b})$ is separable. Using Proposition
\ref{separ} we conclude $(X,T)$ is $\mu-$mean equicontinuous.
\end{proof}

\begin{definition}
Two $\ $measure preserving transformations, $(M,\mu,T)$ and $(M^{\prime}%
,\mu^{\prime},T^{\prime}),$ are\textbf{\ isomorphic (measurably)} if there
exists an a.e. bijective and measure preserving function $f:(M,\mu
)\rightarrow(M^{\prime},\mu^{\prime})$ such that $T^{\prime i}\circ f=f\circ
T^{i}$ for every $i\in\mathbb{G}$ and $f^{-1}$ is also measure preserving.

We say $(M^{\prime},\mu^{\prime},T^{\prime})$ is a \textbf{factor} of
$(M,\mu,T)$ if there exists a surjective and measure preserving function
$f:(M,\mu)\rightarrow(M^{\prime},\mu^{\prime})$ such that $T^{\prime i}\circ
f=f\circ T^{i}$ for every $i\in\mathbb{G}$ .
\end{definition}

\begin{proposition}
\label{isomorphism}Let $(X,\mu,T)$ and $(X^{\prime},\mu^{\prime},T^{\prime})$
be two ergodic topological systems. If $(X,T)$ is $\mu-$mean equicontinuous
and $(X^{\prime},\mu^{\prime},T^{\prime})$ is a factor of $(X,\mu,T)$ then
$(X^{\prime},T^{\prime})$ is $\mu^{\prime}-$mean equicontinuous.
\end{proposition}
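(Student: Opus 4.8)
The plan is to exploit the characterization from Proposition~\ref{separ}: a system is $\mu$-mean equicontinuous if and only if there is a full-measure set whose image in the Besicovitch space is separable. So I would reduce the whole statement to a separability transfer through the factor map $f$. First I would set up notation: let $d_b$ and $d_b'$ denote the Besicovitch pseudometrics on $X$ and $X'$ respectively, and let $f_b, f_b'$ be the corresponding Besicovitch projections. The key structural fact I need is that the factor map $f$ is (almost everywhere) non-expanding with respect to the Besicovitch pseudometrics, i.e. that
\[
d_b'(f(x),f(y)) \le d_b(x,y)
\]
for $\mu\times\mu$-almost every pair $(x,y)$. This should follow because $f$ intertwines the actions ($T'^i\circ f = f\circ T^i$), so the orbit of $f(x)$ under $T'$ is the $f$-image of the orbit of $x$ under $T$; combined with (uniform) continuity of $f$ one controls $d(T'^i f(x), T'^i f(y)) = d'(f(T^i x), f(T^i y))$ by $d(T^i x, T^i y)$, and the upper-density thresholds defining $d_b'$ are then dominated by those defining $d_b$.

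\smallskip

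Granting this, the argument is short. By hypothesis $(X,T)$ is $\mu$-mean equicontinuous, so by Proposition~\ref{separ} there is $X'' \subseteq X$ with $\mu(X'')=1$ such that $(X''/d_b, d_b)$ is separable; let $D\subseteq X''$ be a countable $d_b$-dense subset of $X''$. Set $Y := f(X'')\subseteq X'$; since $f$ is measure preserving and surjective onto a full-measure set, $\mu'(Y)=1$. I claim $f(D)$ is $d_b'$-dense in $Y$: given $f(x)\in Y$ with $x\in X''$ and $\eta>0$, pick $z\in D$ with $d_b(x,z)<\eta$, and then by the non-expanding inequality $d_b'(f(x),f(z)) \le d_b(x,z) < \eta$. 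Hence $(Y/d_b', d_b')$ is separable, and applying Proposition~\ref{separ} in the reverse direction to $(X',T')$ yields that $(X',T')$ is $\mu'$-mean equicontinuous.

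\smallskip

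The main obstacle is the non-expanding inequality itself, specifically handling it measure-theoretically rather than pointwise. Because $f$ is only a measurable factor map, the intertwining identity $T'^i\circ f = f\circ T^i$ and measure preservation hold only almost everywhere, so I cannot assume $f$ is everywhere continuous. I expect to circumvent this using Lusin's theorem (Theorem~\ref{simon}): for each $\kappa$ choose a compact $M\subseteq X$ with $\mu(M)>1-\kappa$ on which $f$ is uniformly continuous, run the density comparison on the part of the orbit landing in $M$, and control the exceptional indices via the pointwise ergodic theorem (the orbit spends a $\mu(M)$-fraction of its time in $M$). Care is needed to pass from the uniform continuity modulus on $M$ to the density thresholds, but the fifth bullet of Lemma~\ref{basico} (that the densities are unaffected by removing a finite set and, more importantly, are controlled by frequencies) should let me absorb the low-density exceptional set into the $\varepsilon$ in the definition of $d_b'$. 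An alternative, possibly cleaner route is to argue directly that $\mu$-mean equicontinuity of $(X,T)$ forces separability of $(X''/d_b,d_b)$, push the countable dense set forward, and verify density of $f(D)$ only on a further full-measure set where the intertwining and continuity estimates are valid simultaneously; either way the crux is making the Besicovitch-contraction of a merely measurable factor map rigorous.
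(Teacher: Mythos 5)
Your route is genuinely different from the paper's and is workable, but one of your stated ingredients is false as written and must be weakened. The paper argues by contrapositive: if $(X',T')$ is not $\mu'$-mean equicontinuous, then by Theorem~\ref{strongsensitive} it is $\mu'$-mean \emph{expansive}, i.e. $d_b'(x',y')>\varepsilon$ for $\mu'\times\mu'$-a.e.\ pair; Lusin's theorem applied to the factor map $f$ (a compact $K$ with $\mu(K)\geq 1-\varepsilon/4$ on which $f$ is uniformly continuous) together with the pointwise ergodic theorem (the orbit pair spends lower density at least $1-\varepsilon/2$ of its time in $K\times K$) then lifts the a.e.\ separation downstairs to a.e.\ separation upstairs, so $(X,T)$ is $\mu$-mean expansive, hence not $\mu$-mean equicontinuous. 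You instead push separability forward through $f$, using Proposition~\ref{separ} in both directions. The technical core is the same in both proofs --- Lusin for $f$ plus the ergodic theorem controlling the density of times the orbit pair lies in the good compact set --- but the contrapositive lets the paper work with a \emph{single} fixed $\varepsilon$ (the expansiveness constant), whereas your direct transfer needs a sequence $\varepsilon_n\to 0$ of Lusin sets and moduli, with the countable dense set $D$ and the reference points all placed inside one intersected full-measure set of generic points. That is heavier bookkeeping, but it buys a direct proof that does not route through the sensitivity dichotomy.

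Two concrete corrections. First, your ``key structural fact'' $d_b'(f(x),f(y))\leq d_b(x,y)$ is false in general, even almost everywhere: a measurable factor map carries no Lipschitz control (already for a circle rotation factoring onto itself with the metric doubled, $d_b'$ is twice $d_b$). What Lusin's theorem actually gives is a uniform-continuity statement: for every $\varepsilon>0$ there are $\kappa,\delta>0$ and a compact $M$ with $\mu(M)>1-\kappa$ such that, for a.e.\ pair, the times with $d'(T'^{i}f(x),T'^{i}f(y))>\varepsilon$ are contained in the times where $d(T^{i}x,T^{i}y)>\delta$ (upper density $<\delta$) together with the times the pair leaves $M\times M$ (upper density $<2\kappa$); you must then choose $\kappa$ and shrink $\delta$ so that $\delta+2\kappa\leq\varepsilon$ to conclude $d_b'(f(x),f(y))\leq\varepsilon$ whenever $d_b(x,y)<\delta$. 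This weaker statement is all your density-transfer argument needs --- pick $z\in D$ with $d_b(x,z)<\delta(\eta)$ rather than $<\eta$ --- so the slip does not break the architecture, but the $1$-Lipschitz claim should not be the stated lemma. Second, $\mu'(f(X''))=1$ is not immediate, since images of measurable sets under measurable maps need not be measurable; either invoke that such images are analytic, hence universally measurable (the spaces are standard), or sidestep the issue by taking $Y:=\left\{x'\in X':\forall n\ \exists w\in f(D),\ d_b'(x',w)\leq\varepsilon_n\right\}$, which is measurable because $d_b'$ is Borel (Remark~\ref{este}) and $f(D)$ is countable, contains the image of your good full-measure set, and is manifestly separable in $d_b'$, so Proposition~\ref{separ} applies.
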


\begin{proof}
We will denote by $d$ and $d^{\prime}$ the metrics of $X$ and $X^{\prime}$ respectively.

Suppose $(X^{\prime},T^{\prime})$ is not $\mu^{\prime}-$mean equicontinuous.
By the previous Theorem we have that $(X^{\prime},T^{\prime})$ is $\mu
^{\prime}-$mean expansive, i.e. there exists $\varepsilon>0$ and a set
$Y^{\prime}\subset X^{\prime}\times X^{\prime}$such that for every
$(x^{\prime},y^{\prime})\in Y^{\prime}$ we have that $d_{b}(x^{\prime
},y^{\prime})>\varepsilon$ and $\mu^{\prime}\times\mu^{\prime}(Y^{\prime})=1.$

Let $f:X\rightarrow X^{\prime}$ be the factor map. By Lusin's Theorem we know
that there exists a compact set $K\subset X$ such that $\mu(K)\geq
1-\varepsilon/4$ and $f\mid_{K}$ is continuous. This implies there exists
$\varepsilon_{1}>0$ such that if $f(x),f(y)\in f(K)$ and $d^{\prime
}(f(x),f(y))>\varepsilon$ then $d(x,y)>\varepsilon_{1}.$ This implies
$\overline{D}\left\{  i\in\mathbb{G}:d^{\prime}(T^{^{\prime}i}f(x),T^{^{\prime
}i}f(y)\text{ }>\varepsilon\right\}  \geq\varepsilon.$ We define
\begin{align*}
E_{1}(x,y)  &  :=\left\{  i\in\mathbb{G}:T^{i}x,T^{i}y\in K\text{ }\right\}
,\text{ and }\\
E_{2}(x,y)  &  :=\left\{  i\in\mathbb{G}:d(T^{i}x,T^{i}y)\text{ }%
>\varepsilon_{1}\right\}  .
\end{align*}
Using that $\mu(K)\geq1-\varepsilon/4$ and the ergodic theorem we have that
for almost every $x,y\in X$ we have that $(f(x),f(y))\in Y^{\prime}$ and
$\underline{D}(E_{1}(x,y))\geq1-\varepsilon/2.$ Since $\left\{  i\in
\mathbb{G}:d^{\prime}(T^{^{\prime}i}f(x),T^{^{\prime}i}f(y)\text{
}>\varepsilon\right\}  \subset E_{2}(x,y)$ we have that $\overline{D}%
(E_{2}(x,y))\geq\varepsilon.$ This implies that for a.e. $x,y\in X$ we have
that $d(T^{i}x,T^{i}y)>\varepsilon_{1}$ for every $i\in E_{1}(x,y)\cap
E_{2}(x,y),$ and that $\overline{D}(E_{1}(x,y)\cap E_{2}(x,y))\geq
\varepsilon/2.$ Hence $(X,\mu,T)$ is $\mu-$mean expansive (hence $\mu-$mean sensitive).
\end{proof}

\subsection{$\mu-$Mean sensitive pairs}

The notion of entropy pairs was introduced in \cite{blanchard1993disjointness}%
. Different kinds of pairs have been studied, in particular sequence entropy
pairs in \cite{Huang2004} and $\mu-$sensitive pairs \cite{mtequicontinuity}.
In \cite{mtequicontinuity} $\mu-$sensitive pairs were used to characterize
$\mu-$sensitivity; we introduce $\mu-$mean sensitive pairs to characterize
$\mu-$mean sensitivity.

\begin{definition}
[\cite{Huang2004}]Let $(X,\mu,T)$ be a measure preserving TDS. We say $(x,y)$
is a $\mu-$sequence entropy pair if for any finite partition $\mathcal{P},$
such that there is no $P\in\mathcal{P}$ such that $x,y\in\overline{P},$ there
exists $S\subset\mathbb{G}$ such that $h_{\mu}^{S}(\mathcal{P},T)>0.$
\end{definition}

The following result was proven for $\mathbb{Z}_{+}-$systems in
\cite{Huang2004} (Theorem 4.3), and was generalized in \cite{KerrMeasurable}
(Proposition 4.7 and Theorem (2) and Proposition 4.9)

\begin{theorem}
\label{seqpairs}An ergodic TDS $(X,\mu,T)$ is $\mu-$null if and only if there
are no $\mu-$sequence entropy pairs.
\end{theorem}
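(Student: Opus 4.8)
The plan is to prove the two implications separately. The forward direction is immediate from the definitions, so I would dispatch it first and reserve the real work for the converse. Recall that $\mu$-nullness means $h_{\mu}^{S}(\mathcal{P},T)=0$ for every finite partition $\mathcal{P}$ and every sequence $S\subset\mathbb{G}$. Given an off-diagonal pair $(x,y)$ with $x\neq y$, pick any finite Borel partition $\mathcal{P}$ all of whose elements have diameter strictly less than $d(x,y)$; since the closure of a set has the same diameter as the set, no $P\in\mathcal{P}$ can satisfy $x,y\in\overline{P}$. For this separating $\mathcal{P}$ and \emph{every} sequence $S$ we have $h_{\mu}^{S}(\mathcal{P},T)=0$ by nullness, so the defining requirement for $(x,y)$ to be a $\mu$-sequence entropy pair fails. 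Hence a $\mu$-null system has no (off-diagonal) $\mu$-sequence entropy pairs.

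For the converse I would argue contrapositively: assuming the system is not $\mu$-null, I would produce a $\mu$-sequence entropy pair. By hypothesis there is a sequence $S$ and a finite partition $\mathcal{P}$ with $h_{\mu}^{S}(\mathcal{P},T)>0$, and the task is to localize this positive sequence entropy to a single pair of points. The route I would take is the combinatorial independence machinery of Kerr--Li: positive sequence entropy of a partition along $S$ is equivalent to the existence of arbitrarily long independence sets (of positive measure) for the relevant names, and by a compactness/limit argument one extracts a non-diagonal $\mu$-IN-pair $(x,y)$, that is, a pair such that every product neighborhood has arbitrarily large independence sets. The final step is the identification of $\mu$-IN-pairs with $\mu$-sequence entropy pairs, which is exactly the local content established in \cite{KerrMeasurable}; combined with the observation that the set of $\mu$-IN-pairs is non-empty precisely when some sequence realizes positive entropy, this yields the desired pair.

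The main obstacle is this localization step: passing from ``one partition and one sequence $S$ detect positive entropy'' to a statement holding simultaneously at a single pair $(x,y)$ for all separating partitions (each possibly along its own sequence). This is precisely the local variational principle for sequence entropy, and I would not reprove it but invoke \cite{Huang2004} (Theorem 4.3) for $\mathbb{Z}_{+}$ together with its generalization in \cite{KerrMeasurable} (Propositions 4.7 and 4.9 and the accompanying characterization) for the general setting. It then only remains to check that their hypotheses apply here, which they do: $(X,\mu)$ is a standard probability space and, for $\mathbb{G}=\mathbb{Z}_{+}^{d}$ (or any countable discrete abelian group), $T$ is a measure-preserving action of the required type.
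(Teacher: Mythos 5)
Your proposal is correct and matches the paper's treatment: the paper gives no proof of Theorem \ref{seqpairs} at all, presenting it as a result quoted from \cite{Huang2004} (Theorem 4.3) and its generalization in \cite{KerrMeasurable} (Propositions 4.7 and 4.9), which are exactly the results you invoke for the substantive (converse) direction. Your elementary argument for the easy direction (a partition into pieces of diameter less than $d(x,y)$ separates any off-diagonal pair, and $\mu$-nullness kills its sequence entropy along every sequence) is a correct supplement to what the paper leaves implicit in the citation.
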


\begin{definition}
We say $(x,y)\in X^{2}$ is a $\mu-$\textbf{mean sensitive pair }if $x\neq y$
and for all open neighbourhoods $U_{x}$ of $x$ and $U_{y}$ of $y,$ there
exists $\varepsilon>0$ such that for every $A\in\mathcal{B}_{X}^{+}$\ there
exist $p,q\in A$ with $\overline{D}(i\in\mathbb{G}:T^{i}p\in U_{x}$ and
$T^{i}q\in U_{y})>\varepsilon.$ We denote the set of $\mu-$mean sensitive
pairs as $S_{\mu}^{m}(X,T).$
\end{definition}

\begin{theorem}
\label{sensipairs}Let $(X,\mu,T)$ be an ergodic TDS. Then $S_{\mu}%
^{m}(X,T)\neq\emptyset$ if and only if $(X,T)$ is $\mu-$mean sensitive.
\end{theorem}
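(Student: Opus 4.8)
The proof splits into two implications; the forward one is short and the converse carries all the weight. First I would prove that a single mean sensitive pair forces $\mu$-mean sensitivity. Suppose $(x_{0},y_{0})\in S_{\mu}^{m}(X,T)$. Since $x_{0}\neq y_{0}$, choose neighbourhoods $U\ni x_{0}$, $V\ni y_{0}$ with disjoint closures, so that $\eta:=d(\overline{U},\overline{V})>0$, and let $\varepsilon>0$ be the constant the definition supplies for this pair. For every $A\in\mathcal{B}_{X}^{+}$ pick $p,q\in A$ with $\overline{D}\{i:T^{i}p\in U,\ T^{i}q\in V\}>\varepsilon$. Whenever $T^{i}p\in U$ and $T^{i}q\in V$ one has $d(T^{i}p,T^{i}q)\geq\eta$, so with $\beta:=\min\{\eta,\varepsilon\}$ and any $\delta<\beta$ the inclusion $\{i:T^{i}p\in U,\ T^{i}q\in V\}\subseteq\Delta_{\delta}(p,q)$ gives $\overline{D}(\Delta_{\delta}(p,q))>\varepsilon\geq\beta>\delta$; hence $d_{b}(p,q)\geq\beta$. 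Thus $\beta/2$ is a $\mu$-mean sensitivity constant and $(X,T)$ is $\mu$-mean sensitive.

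For the converse I would first reduce, via Theorem \ref{strongsensitive}, to the assumption that $(X,T)$ is $\mu$-mean expansive: fix $\varepsilon>0$ with $\mu\times\mu\{(x,y):d_{b}(x,y)>\varepsilon\}=1$, so that $\overline{D}(\Delta_{\varepsilon}(x,y))\geq\varepsilon$ for $\mu\times\mu$-a.e.\ pair. For open sets $U,V$ set $g_{U,V}(x,y):=\overline{D}\{i:T^{i}x\in U,\ T^{i}y\in V\}$; as in Remark \ref{este} this is Borel, and by the shift-invariance of $\overline{D}$ (Lemma \ref{basico}) it is $T\times T$-invariant. Call $(U,V)$ \emph{admissible} if there is $\delta>0$ such that $\{g_{U,V}>\delta\}$ meets $A\times A$ for every $A\in\mathcal{B}_{X}^{+}$; unwinding the definition, $(x^{*},y^{*})$ is a $\mu$-mean sensitive pair exactly when $x^{*}\neq y^{*}$ and every pair of neighbourhoods of $x^{*}$ and $y^{*}$ is admissible.

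The plan is then a refinement scheme producing the pair. I would build nested neighbourhoods $U_{0}\supseteq U_{1}\supseteq\cdots\to x^{*}$ and $V_{0}\supseteq V_{1}\supseteq\cdots\to y^{*}$, with $\overline{U_{n}}\cap\overline{V_{n}}=\emptyset$ and diameters tending to $0$, each pair admissible; intersecting yields $x^{*}\neq y^{*}$ every neighbourhood pair of which contains some admissible $(U_{n},V_{n})$, hence a $\mu$-mean sensitive pair. The pigeonhole that drives each step is: cover $X$ (or the current $U$ and $V$) by finitely many sets of small diameter $V_{1},\dots,V_{n}$; since $\Delta_{\varepsilon}(x,y)\subseteq\bigcup_{a\neq b}\{i:T^{i}x\in V_{a},\ T^{i}y\in V_{b}\}$, subadditivity of $\overline{D}$ gives $\sum_{a\neq b}g_{V_{a},V_{b}}(x,y)\geq\varepsilon$ for a.e.\ $(x,y)$, so the invariant sets $\{g_{V_{a},V_{b}}>\varepsilon/2n^{2}\}$ cover $X\times X$ up to a null set, and being of full measure their union meets every $A\times A$.

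The hard part will be the quantifier upgrade hidden in that last sentence: a finite union of $T\times T$-invariant sets can be square-dense (even of full measure) while no single member is, because $\mu\times\mu$ need not be ergodic. Indeed, a product of a weakly mixing mean-expansive system with a rotation is mean expansive but not weakly mixing, and there the only mean sensitive pairs lie over the diagonal of the rotation factor, so the admissible box must be chosen to respect that structure. To overcome this I would pass to the ergodic decomposition $\mu\times\mu=\int\nu\,d\rho(\nu)$ of $T\times T$: since $\mu$ is ergodic, uniqueness of the decomposition forces both marginals of $\rho$-a.e.\ $\nu$ to equal $\mu$, and $\rho$-a.e.\ $\nu$ gives full measure to $\{d_{b}>\varepsilon\}$. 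On a fixed such component each $g_{V_{a},V_{b}}$ is $\nu$-a.e.\ constant (arguing as in Lemma \ref{ecu}), so the finite cover now does select a single box pair with a positive density constant on a $\nu$-full set, and the refinement can be run componentwise. The delicate remaining point is transferring $\nu$-a.e.\ positivity back to square-density against every $\mu$-positive $A$: this is where the correct component must be the one sitting over the diagonal of the maximal equicontinuous (Kronecker) factor, after which a Steinhaus/density argument — every $A\in\mathcal{B}_{X}^{+}$ contains pairs arbitrarily close in that isometric coordinate, precisely where the good set concentrates — yields that $\{g_{U_{n},V_{n}}>\delta_{n}\}$ meets $A\times A$. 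Carrying this through the nested construction produces the desired $\mu$-mean sensitive pair, and I expect the identification of the right ergodic component to be the main obstacle.
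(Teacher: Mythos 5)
Your forward implication is correct and essentially identical to the paper's. The converse, however, is not a proof: it is a plan whose decisive step is left open, as you yourself acknowledge. Concretely, your refinement scheme needs, at each stage, a \emph{single} box $(U,V)$ admissible for \emph{every} $A\in\mathcal{B}_{X}^{+}$, while the covering/pigeonhole argument only produces, for each fixed pair of points (hence with a choice depending on $A$), \emph{some} box with large visiting density. You correctly identify this quantifier gap, but the repair you sketch does not close it. The component of the ergodic decomposition of $\mu\times\mu$ ``sitting over the Kronecker diagonal'' is, in general, \emph{singular} with respect to $\mu\times\mu$ (already for a product of a weakly mixing system with a rotation, the relatively independent self-joining over the rotation factor is carried by the set $\left\{  (y_{1},z,y_{2},z)\right\}  $, which is $\mu\times\mu$-null). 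Hence the hypothesis $\mu\times\mu\left\{  (x,y):d_{b}(x,y)>\varepsilon\right\}  =1$ gives no information about that component, and your step ``$\rho$-a.e. $\nu$ gives full measure to $\left\{  d_{b}>\varepsilon\right\}  $'' cannot be applied to the one component you actually need; you would have to establish mean expansiveness along the diagonal joining separately, which is essentially the content of the theorem you are proving. The ergodicity of that component and the concluding Steinhaus argument are likewise asserted, not proved.

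The paper dissolves this difficulty with a change in proof architecture that your proposal misses: it negates the conclusion globally and merges the finitely many resulting bad sets using ergodicity of $\mu$ alone, never touching the decomposition of $\mu\times\mu$. Suppose $S_{\mu}^{m}(X,T)=\emptyset$; then every $(x,y)$ in the compact set $X^{\varepsilon}=\left\{  (x,y):d(x,y)\geq\varepsilon\right\}  $ admits a box $U_{x}\times U_{y}$ such that for each $\delta>0$ some $A_{\delta}(x,y)\in\mathcal{B}_{X}^{+}$ has visiting density at most $\delta$ to that box for all pairs from it. Compactness gives a finite subcover indexed by $F$; take $\delta=\varepsilon/\left\vert F\right\vert $, and use ergodicity of $\mu$ to find translates with $A:=\cap_{(x,y)\in F}T^{n(x,y)}A_{\delta}(x,y)\in\mathcal{B}_{X}^{+}$; translation invariance of upper density (Lemma \ref{basico}) preserves the bound $\delta$ for all pairs from $A$ and all boxes of $F$ simultaneously. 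Applying $\mu$-mean sensitivity to this single $A$ and pigeonholing over the $\left\vert F\right\vert $ boxes gives a contradiction. The point is that one never needs one box good for all $A$; one needs, for finitely many boxes at once, a common bad $A$ --- and that is exactly what ergodic merging of finitely many positive-measure sets provides. (If you want to keep your direct nested construction, the same negation-plus-merging argument carries out each nesting step as well; the ergodic decomposition of $\mu\times\mu$ and the Kronecker factor are never needed.)
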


\begin{proof}
If $(x,y)\in S_{\mu}^{m}(X,T)$ then there exists open neighbourhoods $U_{x}$
of $x$ and $U_{y}$ of $y$ (with $d(U_{x},U_{y})>0)$ and $\varepsilon>0$ such
that for every $A\in\mathcal{B}_{X}^{+}$\ there exist $p,q\in A$ and
$S\subset$ $\mathbb{G}$ with $\overline{D}(S)>\varepsilon$ such that
$T^{i}p\in U_{x}$ and $T^{i}q\in U_{y}$ for every $i\in S.$ This implies that
$\overline{D}\left\{  i\in\mathbb{G}:d(T^{i}x,T^{i}y)\geq d(U_{x}%
,U_{y})\right\}  >\varepsilon.$ Thus $(X,T)$ is $\mu-$mean sensitive.

Let $(X,T)$ be $\mu-$mean sensitive with sensitive constant $\varepsilon_{0}$
and $0<\varepsilon<\varepsilon_{0}$.

For $\varepsilon>0$ we define the compact set
\[
X^{\varepsilon}:=\left\{  (x,y)\in X^{2}\mid d(x,y)\geq\varepsilon\right\}  .
\]

Suppose that $S_{\mu}^{m}(X,T)=\emptyset.$ In particular this implies that for
every $(x,y)\in X^{\varepsilon}$ there exist open neighbourhoods of $x$ and
$y,$ $U_{x}$ and $U_{y}$, such that for every $\delta>0$ there exists a set
$A_{\delta}(x,y)\in\mathcal{B}_{X}^{+}$ such that
\[
\overline{D}\left\{  i\in\mathbb{G}:(T^{i}p,T^{i}q)\in U_{x}\times
U_{y}\right\}  \leq\delta
\]
for all $p,q\in A_{\delta}(x,y)$.

There exists a finite set of points $F\subset X^{\varepsilon}$ such that%
\[
X^{\varepsilon}\subseteq\cup_{(x,y)\in F}U_{x}\times U_{y}.
\]
Let $\delta=\varepsilon/\left\vert F\right\vert .$ Since $\mu$ is ergodic for
every $(x,y)\in F$ there exists $n(x,y)\in\mathbb{G}$ such that $A:=\cap
_{(x,y)\in F}T^{n(x,y)}A_{\delta}(x,y)\in\mathcal{B}_{X}^{+}$.$\ $Thus for
every $(x,y)\in F$
\[
\overline{D}\left\{  i\in\mathbb{G}:(T^{i}p,T^{i}q)\in U_{x}\times
U_{y}\right\}  \leq\delta,
\]
for every $p,q\in A.$

Since $\varepsilon$ is smaller than a sensitive constant there exist $p,q\in
A$ such that
\begin{align*}
\overline{D}\left\{  i\in\mathbb{G}:(T^{i}p,T^{i}q)\in X^{\varepsilon
}\right\}   &  >\varepsilon,\text{ and hence}\\
\overline{D}\left\{  i\in\mathbb{G}:(T^{i}p,T^{i}q)\in\cup_{(x,y)\in F}%
U_{x}\times U_{y}\right\}   &  >\varepsilon.
\end{align*}
We have a contradiction since this means there exists $(x^{\prime},y^{\prime
})\in F$ such that
\[
\overline{D}\left\{  i:(T^{i}p,T^{i}q)\in U_{x^{\prime}}\times U_{y^{\prime}%
}\right\}  >\varepsilon/\left\vert F\right\vert =\delta.
\]

\end{proof}

\bigskip The relationship between entropy (and sequence entropy) and
independence was studied in \cite{KerrMeasurable}. The following result shows
there is a relationship between $\mu-$mean sensitive pairs and a different
kind of measure theoretical independence pairs.

\begin{lemma}
\label{nind}\bigskip Let $(X,\mu,T)$ be an ergodic $TDS.$ Suppose that $x\neq
y$ and that for all open neighbourhoods $U_{x}$ of $x$ and $U_{y}$ of $y$,
there exists $\delta>0$ such that for every $N$ there exists $S_{N}%
\subset\mathbb{G}$, with $\left\vert S_{N}\right\vert \geq N,$ such that for
all $s_{i},s_{j}\in S_{N}$ we have that $\mu(T^{-s_{i}}U_{x}\cap T^{-s_{j}%
}U_{y})>\delta.$ Then $(x,y)\in S_{\mu}^{m}(T).$\newline
\end{lemma}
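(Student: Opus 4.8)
The plan is to fix the neighbourhoods $U_x,U_y$, extract the constant $\delta$ and the finite sets $S_N$ from the hypothesis, and then produce, inside an arbitrary $A\in\mathcal{B}_X^+$, a pair whose two orbits visit $U_x$ and $U_y$ synchronously with positive upper density; this is exactly what membership $(x,y)\in S_\mu^m(T)$ demands. The natural first move is a second moment computation on a single orbit. For $w\in X$ set
\[
\alpha_N(w):=\frac{1}{|S_N|}\sum_{s\in S_N}1_{U_x}(T^s w),\qquad \beta_N(w):=\frac{1}{|S_N|}\sum_{s\in S_N}1_{U_y}(T^s w).
\]
Integrating the product and using Fubini together with the hypothesis $\mu(T^{-s}U_x\cap T^{-s'}U_y)>\delta$ for distinct $s,s'\in S_N$ gives $\int_X\alpha_N\beta_N\,d\mu\ge(1-1/|S_N|)\delta$, so for large $N$ a Markov-type estimate yields a set $W_N$ with $\mu(W_N)\ge\delta/4$ on which both $\alpha_N$ and $\beta_N$ are at least $\delta/4$; that is, a definite proportion of points visit each of $U_x$ and $U_y$ along $S_N$ with frequency bounded below by a constant depending only on $\delta$.

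The second step is to convert these (a priori unsynchronised) single-orbit visits into synchronised visits of a genuine pair. Here I would use the offset trick: if $w\in T^{-s}U_x\cap T^{-s'}U_y$, then the pair $(w,T^{s'-s}w)$ satisfies $T^{s}w\in U_x$ and $T^{s}(T^{s'-s}w)=T^{s'}w\in U_y$, so the two orbits meet $U_x$ and $U_y$ at the common time $s$. Running over the pairs in $S_N$ and grouping them by the offset $m=s'-s$, one seeks an offset $m$ and a positive measure set of $w$ for which the synchronised return set $\{i:T^iw\in U_x,\ T^{i+m}w\in U_y\}$ is large; the pair $(w,T^mw)$ is then the candidate.

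The third step is localisation to the prescribed $A$. Since $(X,\mu,T)$ is ergodic and $\mu(A)>0$, Birkhoff's theorem gives, for a.e.\ $w$, that $\underline D\{k:T^kw\in A\}=\mu(A)>0$; applying this to $w$ and to $T^mw$, and intersecting with the positive measure sets produced above, lets me push a suitable time translate of the candidate pair back into $A$, using the translation invariance of upper density recorded in Lemma \ref{basico}. Choosing $\varepsilon$ of order $\delta^2$ should then deliver $p,q\in A$ with $\overline D(i\in\mathbb{G}:T^ip\in U_x\text{ and }T^iq\in U_y)>\varepsilon$.

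The hard part will be the second step. The hypothesis controls only the finite sets $S_N$, and a large proportion of synchronised returns inside $S_N$ does not by itself produce positive upper density with respect to the cubes $F_n$, since $S_N$ need carry no density in $\mathbb{G}$. Converting the $S_N$-frequencies into a genuine lower bound for the $F_n$ upper density, while simultaneously pinning down a single offset $m$ (a pigeonhole over offsets $s'-s$ that may range over an arbitrarily large set) and keeping both coordinates inside $A$, is where the real work lies; I expect ergodicity must be invoked a second time, via the ergodic theorem applied to the product transformation along the offset, to average the offset dependence away and recover an honest upper-density bound.
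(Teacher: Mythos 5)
There is a genuine gap, and it sits exactly where you predicted: steps 2--3 cannot be completed as described, because your quantifiers are in the wrong order. You fix an offset $m$ (extracted from the $S_N$'s by a second-moment argument and pigeonhole) \emph{before} looking at $A$, and then in step 3 you try to translate the pair $(w,T^m w)$ into $A$. That requires a single time $k$ with $T^k w\in A$ \emph{and} $T^{k+m}w\in A$, i.e.\ it requires $\mu(A\cap T^{-m}A)>0$ for your particular $m$. Birkhoff applied to $w$ and to $T^m w$ separately only gives two return-time sets each of lower density $\mu(A)$, and these can be disjoint whenever $\mu(A)\leq 1/2$; nothing forces them to meet. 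Your fallback idea, the ergodic theorem for the product transformation, also fails: $T\times T$ on $(X\times X,\mu\times\mu)$ is not ergodic unless $T$ is weakly mixing, so its Birkhoff averages need not converge to $\mu\times\mu$ of the target set. A secondary problem is that $s'-s$ need not exist at all, since $\mathbb{G}=\mathbb{Z}_+^d$ is only a semigroup. Finally, your instinct that frequencies along $S_N$ cannot produce $F_n$-density is correct, which is why the second-moment computation of step 1, while true, leads nowhere: $S_N$ carries no density information.

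The repair is to reverse the order: choose the pair of times from $S_N$ \emph{after} $A$ is given, and get the density from the ergodic theorem applied to a single set of measure $>\delta$, not from $S_N$. Concretely (this is the paper's proof): given $A\in\mathcal{B}_X^+$, take $N>1/\mu(A)$. The translates $\left\{ T^{-s}A\right\}_{s\in S_N}$ all have measure $\mu(A)$, so if all pairwise intersections were null the union would have measure $>1$; hence there are $s_1\neq s_2\in S_N$ with $\mu(T^{-s_1}A\cap T^{-s_2}A)>0$. For this same pair the hypothesis gives $\mu(W)>\delta$ where $W:=T^{-s_1}U_x\cap T^{-s_2}U_y$. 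By the pointwise ergodic theorem almost every point is generic for $W$, so we may pick $z\in T^{-s_1}A\cap T^{-s_2}A$ with $\underline{D}\left\{ i:T^iz\in W\right\} =\mu(W)>\delta$. Set $p:=T^{s_1}z$ and $q:=T^{s_2}z$; then $p,q\in A$, and by commutativity $T^ip=T^{s_1}(T^iz)\in U_x$ and $T^iq=T^{s_2}(T^iz)\in U_y$ for every return time $i$ of $z$ to $W$. This gives $\overline{D}\left\{ i:T^ip\in U_x\text{ and }T^iq\in U_y\right\} >\delta$ with $\varepsilon=\delta$ depending only on $U_x,U_y$, which is exactly membership in $S_\mu^m(T)$. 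Note that the hypothesis ``for every $N$ there exists $S_N$ with $\left\vert S_N\right\vert\geq N$'' is consumed precisely by the pigeonhole on translates of $A$ --- the step your outline is missing --- and that no second-moment estimate, offset pigeonhole, or product system is needed.
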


\begin{proof}
Let $A\in\mathcal{B}_{X}^{+}.$ There exist $N>0$ and $s_{1}\neq s_{2}\in
S_{N}$ such that
\[
\mu(T^{-s_{1}}A\cap T^{-s_{2}}A)>0.
\]

Let $W:=T^{-s_{1}}U_{x}\cap T^{-s_{2}}U_{y}.$ By the pointwise ergodic theorem
there exists a point $z\in T^{-s_{1}}A\cap T^{-s_{2}}A$ such that
$\underline{D}(S)=\mu(W)>\delta,$ where $S=\left\{  i>s_{1},s_{2}\mid
T^{i}z\in W\right\}  .$ Let $p:=T^{s_{1}}z$ and $q:=T^{s_{2}}z.$ We have that
$p,q\in A$ and $T^{i}p\in U_{x}$ and $T^{i}q\in U_{y}$ for every $i\in S.$This
means that $(x,y)\in S_{\mu}^{m}(T).$
\end{proof}

\subsection{\bigskip Discrete spectrum and sequence entropy}

3.4.1 {\large Sequence entropy}

\begin{definition}
[\cite{0036-0279-22-5-R02}]Let $(M,\mu,T)$ be a measure preserving
transformation. Given a finite measurable partition $\mathcal{P}$ of $X$ and
$S=\left\{  s_{n}\right\}  \subset\mathbb{G}$ we define $h_{\mu}%
^{S}(\mathcal{P},T):=\lim\sup_{n\rightarrow\infty}\frac{1}{n}H(\vee_{i=1}%
^{n}T^{-s_{i}}\mathcal{P)},$ and the \textbf{sequence entropy of }$(X,\mu
,T)$\textbf{\ with respect to }$S$ as $h_{\mu}^{S}(T):=\sup_{\mathcal{P}\text{
}}h_{\mu}^{S}(\mathcal{P},T)$. The system is said to be $\mu$\textbf{-null}
(or \textbf{zero sequence entropy}) if $h_{\mu}^{S}(T)=0$ for every
$S\subset\mathbb{G}.$
\end{definition}

The following remarkable lemma by Kushnirenko provides a connection between
entropy and functional analysis (which is in part responsible for the
connection between sequence entropy and discrete spectrum see section 3.4.2).

\begin{lemma}
[\cite{0036-0279-22-5-R02}]\label{Kushnirenko}Let $(M,\mu)$ be a probability
space and $\left\{  \xi_{n}\right\}  $ be a sequence of two-set partitions of
$M$, with $\xi_{n}=(P_{n},P_{n}^{c}).$ The closure of $\left\{  1_{P_{1}%
},1_{P_{2}}...\right\}  \subset L^{2}(M)$ is compact if and only if for all
subsequences%
\[
\lim_{n\rightarrow\infty}\frac{1}{n}H(\bigvee\limits_{i=1}^{n}\xi_{m_{i}})=0.
\]

\end{lemma}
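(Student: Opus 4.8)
The plan is to prove the two implications separately, using throughout the identity $\|1_{P_i}-1_{P_j}\|_{L^2}^2=\mu(P_i\triangle P_j)$, so that compactness of the closure is exactly total boundedness of $\{1_{P_n}\}$ in the symmetric-difference metric. The key analytic input is the elementary estimate: if $\mu(P\triangle Q)=\delta$ then $H(\xi_Q\mid\xi_P)\le\phi(\delta):=-\delta\log\delta-(1-\delta)\log(1-\delta)$. This follows because $\xi_Q$ is refined by $\xi_P\vee\eta$ with $\eta=(P\triangle Q,(P\triangle Q)^c)$, whence $H(\xi_Q\mid\xi_P)\le H(\eta\mid\xi_P)\le H(\eta)=\phi(\delta)$.

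For the direction ``compact $\Rightarrow$ every subsequence has vanishing rate'', I fix $\delta>0$ and a finite $\sqrt\delta$-net $\{1_{P_{k_1}},\dots,1_{P_{k_N}}\}$, where $N=N(\delta)$ depends only on the totally bounded set. For any subsequence $(m_i)$ and any $n$, each $P_{m_i}$ satisfies $\mu(P_{m_i}\triangle P_{k_{r(i)}})\le\delta$ for some net index $r(i)$, so by subadditivity and the estimate above
\[ H\Big(\bigvee_{i=1}^n\xi_{m_i}\Big)\le H\Big(\bigvee_{r=1}^N\xi_{k_r}\Big)+\sum_{i=1}^n H\Big(\xi_{m_i}\,\Big|\,\bigvee_{r=1}^N\xi_{k_r}\Big)\le N\log2+n\,\phi(\delta). \]
Dividing by $n$ and letting $n\to\infty$ gives $\limsup_n\frac1n H\le\phi(\delta)$; since $\delta$ is arbitrary and $\phi(\delta)\to0$, the rate vanishes, and crucially the bound is uniform over the choice of subsequence.

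For the converse I argue by contraposition. If the closure is not compact then $\{1_{P_n}\}$ is not totally bounded, so there are $\epsilon>0$ and a subsequence with $\mu(P_{m_i}\triangle P_{m_j})\ge\epsilon$ for all $i\ne j$. The point to stress is that pairwise separation alone does \emph{not} force a positive entropy rate along this sequence; one must pass to a further subsequence that is genuinely spread out. Exploiting that each $1_{P_{m_i}}$ takes values in $\{0,1\}$, a Ramsey-type (subsequence-principle) extraction produces a sub-subsequence whose finite-dimensional distributions converge to those of an exchangeable $\{0,1\}$-valued process $(Y_k)$; by de Finetti this is a mixture of i.i.d.\ Bernoulli$(\theta)$ laws with directing measure $m$ on $[0,1]$. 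Separation survives in the limit as $\int_0^1 2\theta(1-\theta)\,dm(\theta)=\mathbb{P}(Y_1\ne Y_2)\ge\epsilon$, so $m$ is not concentrated on $\{0,1\}$. Since $H(\bigvee_{i\le n}\xi_{m_i})$ is the Shannon entropy of $(1_{P_{m_1}},\dots,1_{P_{m_n}})$, its rate converges to the de Finetti value $\int_0^1\phi(\theta)\,dm(\theta)\ge c\int_0^1\theta(1-\theta)\,dm(\theta)\ge c\,\epsilon/2>0$, using the elementary inequality $\phi(\theta)\ge c\,\theta(1-\theta)$. Thus this subsequence has non-vanishing rate, contradicting the entropy hypothesis.

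The main obstacle is precisely this converse: converting $L^2$-separation into entropy. The difficulty is that a pairwise $\epsilon$-separated family can live inside a $\sigma$-algebra of very small entropy, so the naive chain-rule/conditional-expectation estimate fails to yield linear growth for the separated sequence itself; this is why a good subsequence must be selected. Making the extraction to an (almost) exchangeable subsequence rigorous, together with the attendant convergence of entropy rates via subadditivity and stationarity, is the technical heart of the argument.
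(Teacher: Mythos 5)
Your forward implication (compact $\Rightarrow$ vanishing rate along every subsequence) is correct and is the standard argument: total boundedness, the bound $H(\xi_Q\mid\xi_P)\le\phi(\mu(P\triangle Q))$, and the chain rule (just take $\delta<1/2$ so that $\phi$ is increasing on the range you use). Note that the paper offers no proof of this lemma at all (it is quoted from Kushnirenko), so I am judging your converse against the standard proof, and there it has a genuine gap: the pivotal claim that the entropy rate of the almost-exchangeable sub-subsequence ``converges to the de Finetti value $\int_0^1\phi(\theta)\,dm(\theta)$'' is not just unproved, it is false. Convergence of finite-dimensional distributions to an exchangeable law, even hereditary convergence with far-out marginals exactly equal to the limit, cannot control $H(1_{P_{m_1}},\dots,1_{P_{m_n}})$, because that quantity involves the joint law in dimension $n\to\infty$ while f.d.d.\ convergence constrains only blocks of fixed size. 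Concretely: for each $k\ge 2$, a classical construction ($k$-wise independent bits via BCH parity-check matrices) produces $L_k$ fair bits, pairwise and in fact $k$-wise independent, which are deterministic functions of only $O(k\log L_k)$ i.i.d.\ seed bits. Take $L_k=2^{k^3}$, use fresh seeds in each block, and let $1_{P_1},1_{P_2},\dots$ be the concatenation of the blocks on the seed space. Then $\mu(P_i\triangle P_j)=1/2$ for all $i\ne j$; any $r$ of the variables whose indices lie in blocks $k\ge r$ are exactly i.i.d.\ Bernoulli$(1/2)$, so this sequence and every further subsequence f.d.d.-converges to the exchangeable i.i.d.$(1/2)$ process, with $m=\delta_{1/2}$ and de Finetti value $\log 2$; and yet $\frac1n H(\bigvee_{i\le n}\xi_i)\to 0$, since the first $n$ variables are measurable over $o(n)$ seed bits. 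So every intermediate conclusion of your extraction can hold while the conclusion fails; any repair along your lines needs the coupling form of the subsequence principle (Berkes--P\'eter: a further subsequence and an exchangeable $(Y_k)$ on the same space with $\mu(1_{P_{m_k}}\ne Y_k)\to 0$) plus a Fano-type entropy comparison, which is a much heavier tool than Ramsey-type f.d.d.\ arguments.

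The standard proof closes exactly this gap with Hilbert-space weak compactness instead of exchangeability. From your $\epsilon$-separated subsequence, pass to a further subsequence with $1_{P_{m_i}}\rightharpoonup g$ weakly in $L^2$ and $\mu(P_{m_i})\to a$; letting $j\to\infty$ and then $i\to\infty$ in $\epsilon\le\mu(P_{m_i})+\mu(P_{m_j})-2\langle 1_{P_{m_i}},1_{P_{m_j}}\rangle$ gives $\int g(1-g)\,d\mu\ge\epsilon/2$, hence $c_0:=\int\phi(g)\,d\mu\ge\epsilon/2$ because $\phi(t)\ge t(1-t)$. Now choose $Q_1,Q_2,\dots$ from $\{P_{m_i}\}$ inductively: if $\mathcal{F}_n$ is the finite algebra generated by $\xi_{Q_1},\dots,\xi_{Q_n}$, then for each atom $A$ weak convergence gives $\mu(P_{m_i}\cap A)\to\int_A g\,d\mu$, so $H(\xi_{P_{m_i}}\mid\mathcal{F}_n)=\sum_A\mu(A)\,\phi\bigl(\mu(P_{m_i}\cap A)/\mu(A)\bigr)\to\sum_A\mu(A)\,\phi\bigl(\frac{1}{\mu(A)}\int_A g\,d\mu\bigr)\ge\int\phi(g)\,d\mu=c_0$, the last step being Jensen for the concave $\phi$ on each atom; hence you may pick $Q_{n+1}$ with $H(\xi_{Q_{n+1}}\mid\mathcal{F}_n)\ge c_0/2$. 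The chain rule then yields $H(\xi_{Q_1}\vee\dots\vee\xi_{Q_n})\ge(n-1)c_0/2$, i.e.\ a subsequence of positive rate. Note how this resolves precisely the two difficulties you flagged: the ``genuinely spread out'' subsequence is built one term at a time against the finite algebra of the terms already chosen, and it is the finite-dimensionality of $L^2(\mathcal{F}_n)$ that upgrades weak convergence to convergence of the nonlinear functional $H(\cdot\mid\mathcal{F}_n)$ --- no exchangeable limit object is ever needed.
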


Given a measure preserving system and a non-trivial measurable partition
$\left\{  B,B^{c}\right\}  $ we can associate a shift invariant measure
$\mu_{B}$ on $\left\{  0,1\right\}  ^{\mathbb{G}}$ as follows. We first define
the function $\phi_{B}:X\rightarrow\left\{  0,1\right\}  ^{\mathbb{G}}$ with
$(\phi(x))_{i}=0$ if and only if $T^{i}x\in B$ and we define $\mu_{B}=\phi
_{B}\mu.$

\begin{theorem}
\label{main}Let $(X,\mu,T)$ be an ergodic TDS. If $(X,T)$ is $\mu-$mean
equicontinuous then it is $\mu-$null.
\end{theorem}

\begin{proof}
We will show there exists a factor map $\phi_{B}:(X,\mu,T)\rightarrow(\left\{
0,1\right\}  ^{\mathbb{G}},\mu_{B},\sigma)$ such that $(\left\{  0,1\right\}
^{\mathbb{G}},\sigma)$ is not $\phi_{B}\mu-$mean equicontinuous. By
Proposition \ref{isomorphism} we conclude $(X,T)$ cannot be $\mu-$mean equicontinuous.

Since the system is not $\mu-$null there exists a two set partition
$\mathcal{P=(}B,B^{c})$ and sequence $S=\left\{  s_{n}\right\}  \subset
\mathbb{G}$ such that $h_{\mu}^{S}(\mathcal{P},T)>0.$

Now we consider the factor $(\left\{  0,1\right\}  ^{\mathbb{Z}},\mu
_{B},\sigma).$

Define the partition $\xi_{i}\mathcal{=(}\left\{  x_{i}=0\right\}  ,\left\{
x_{i}=1\right\}  ).$ By Lemma \ref{Kushnirenko} we have that the closure of
$\left\{  1_{\left\{  x_{s_{1}}=0\right\}  },1_{\left\{  x_{s_{2}}=0\right\}
}...\right\}  $ is not compact, and hence it is not totally bounded. So there
exists $\varepsilon>0$ such that for every $N\in\mathbb{N}$ there exists
$S_{N}\subset S$ with $\left\vert S_{N}\right\vert =N$ with
\begin{equation}
\mu(\left\{  x_{i}=0\right\}  \bigtriangleup\left\{  x_{j}=0\right\}
)=\int\left\vert 1_{\left\{  x_{i}=0\right\}  }-1_{\left\{  x_{j}=0\right\}
}\right\vert ^{2}d\mu\geq\varepsilon\text{ } \label{l2}%
\end{equation}

for every $i\neq j\in S^{\prime}.$

Now we want to show $(\left\{  0,1\right\}  ^{\mathbb{Z}},\sigma)$ is $\mu
_{B}-$mean sensitive. Let $A\in\mathcal{B}_{\left\{  0,1\right\}
^{\mathbb{Z}}}^{+}$ and $N$ sufficiently large$.$ There exist $s\neq t\in
S_{N}$ such that
\[
\mu(\sigma^{-s}A\cap\sigma^{-t}A)>0.
\]

Let $W:=\left\{  x:x_{s}\neq x_{t}\right\}  $ and $k\in\mathbb{N}$ such that
$s,t\in F_{k}.$ By the pointwise ergodic theorem there exists $z\in\sigma
^{-s}A\cap\sigma^{-t}A$ such that $\underline{D}(S_{W})=\mu(W),$ where
\[
S_{W}:=\left\{  i\notin F_{k}\mid\sigma^{i}z\in W\right\}  .
\]
By (\ref{l2}) we have that $\mu_{B}(W)>\varepsilon;$ hence $\underline{D}%
(S_{W})>\varepsilon.$ Let $p:=\sigma^{s}z$ and $q:=\sigma^{t}z.$ Since
$\mathbb{G}$ is commutative we have that $p_{i}\neq$ $q_{i}$ for every $i\in
S_{W}$.

We conclude that for every $A\in\mathcal{B}_{\left\{  0,1\right\}
^{\mathbb{Z}}}^{+}$ there exist $p,q\in A$ such that $\overline{D}\left\{
i\in\mathbb{G}:p_{i}\neq\text{ }q_{i}\right\}  >\varepsilon.$ This means that
$(\left\{  0,1\right\}  ^{\mathbb{Z}},\sigma)$ is $\mu_{B}-$mean sensitive,
hence not $\mu_{B}-$mean equicontinuous$.$
\end{proof}

3.4.2 {\large Discrete spectrum}

\bigskip

A measure preserving transformation is a \textbf{measurable isometry }if it is
isomorphic to an isometry (a TDS that satisfies $d(x,y)=d(T^{i}x,T^{i}y)$ for
all $i$).

A measure preserving transformation on a probability space $(M,\mu,T)$
generates a unitary linear operator on the Hilbert space $L^{2}(M,\mu),$ by
$U_{T}:f\mapsto f\circ T.$

\begin{definition}
\label{ds} We say $(M,\mu,T)$ has \textbf{discrete spectrum} if there exists
an orthonormal basis for $L^{2}(M,\mu)$ which consists of eigenfunctions of
$U_{T}.$
\end{definition}

The Halmos-Von Neumann theorem states ergodic system is a measurable isometry
if and only if it has discrete spectrum \cite{halmos1942operator}.

Kushnirenko's theorem states that an ergodic system is $\mu-$null if and only
if it has discrete spectrum \cite{0036-0279-22-5-R02}.

A $\mathbb{G-}$measure preserving transformation on a probability space
$(M,\mu,T)$ generates a family of unitary linear operator on the Hilbert space
$L^{2}(M,\mu),$ by $U_{T^{i}}:f\mapsto f\circ T^{i}.$ We say $(M,\mu,T)$ has
\textbf{discrete spectrum} if $L^{2}(M,\mu)$ is the direct sum of finite
dimensional $U_{T}$-invariant subspaces. Mackey showed that Halmos and Von
Neumann's Theorem holds for locally compact group actions
\cite{mackey1964ergodic}.

Kushnirenko's result was generalized for discrete actions by Kerr-Li in
\cite{KerrMeasurable}.

\begin{theorem}
\label{discrete}Let $(X,\mu,T)$ be an ergodic TDS. If $(X,\mu,T)$ has discrete
spectrum then it is $\mu-$mean equicontinuous.
\end{theorem}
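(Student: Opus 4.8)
The plan is to reduce to an isometry via the Halmos--Von Neumann theorem and then transport $\mu$-mean equicontinuity back across the measurable isomorphism using Proposition \ref{isomorphism}. Since $(X,\mu,T)$ is ergodic and has discrete spectrum, the Halmos--Von Neumann theorem (in the form quoted above, valid for our $\mathbb{G}$-actions by Mackey's extension) tells us that $(X,\mu,T)$ is a measurable isometry, i.e.\ it is measurably isomorphic to some isometry $(Y,\nu,S)$, where $(Y,S)$ is a TDS with $d_{Y}(S^{i}u,S^{i}v)=d_{Y}(u,v)$ for all $i\in\mathbb{G}$, and $\nu$ is the (ergodic, since ergodicity is an isomorphism invariant) invariant measure transported from $\mu$.

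The next step is to observe that the isometry $(Y,S)$ is trivially $\nu$-mean equicontinuous. Indeed, an isometry is equicontinuous: for every $\varepsilon>0$, taking $\delta=\varepsilon$ gives $d_{Y}(u,v)\le\delta\Rightarrow d_{Y}(S^{i}u,S^{i}v)=d_{Y}(u,v)\le\varepsilon$ for \emph{all} $i$, so that $\overline{D}\{i:d_{Y}(S^{i}u,S^{i}v)>\varepsilon\}=0<\varepsilon$. Hence every point of $Y$ is a mean equicontinuity point, and taking the compact set $M=Y$ in the definition of $\mu$-mean equicontinuity shows that $(Y,S)$ is $\nu$-mean equicontinuous. (Equivalently, one checks that $d_{b}\le d_{Y}$ for an isometry, so $f_{b}$ is $1$-Lipschitz and $(Y/d_{b},d_{b})$ is a continuous image of the compact metric space $Y$, hence separable; Proposition \ref{separ} then gives the same conclusion.)

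Finally, the measurable isomorphism $f\colon(Y,\nu)\to(X,\mu)$ is in particular a surjective, measure-preserving, $\mathbb{G}$-equivariant map, so $(X,\mu,T)$ is a factor of the $\nu$-mean equicontinuous ergodic topological system $(Y,\nu,S)$. Applying Proposition \ref{isomorphism} with $(Y,\nu,S)$ in the role of the $\mu$-mean equicontinuous system and $(X,\mu,T)$ in the role of its factor, we conclude that $(X,T)$ is $\mu$-mean equicontinuous, as desired.

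I do not expect a serious obstacle here, since the theorem is an assembly of results already established: the genuine analytic content sits inside Proposition \ref{isomorphism}, whose proof uses Lusin's theorem to push mean-expansiveness through a merely \emph{measurable} factor map (the isomorphism $f$ is not assumed continuous, so one cannot naively transfer separability of the Besicovitch space). The only points requiring care are verifying that the isometry model $(Y,\nu,S)$ really is an ergodic \emph{topological} system so that Proposition \ref{isomorphism} applies, and correctly orienting the factor relation so that the isometry is the source and $(X,\mu,T)$ the factor.
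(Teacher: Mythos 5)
Your proposal is correct and follows exactly the paper's route: the paper's entire proof of Theorem \ref{discrete} is the one-line remark that it ``follows from the Halmos-Von Neumann Theorem and Proposition \ref{isomorphism}'', which is precisely your argument with the details filled in. Your expansion (the isometric model is trivially mean equicontinuous, hence $\nu$-mean equicontinuous, and Proposition \ref{isomorphism} transfers this across the measurable isomorphism with the isometry as source and $(X,\mu,T)$ as factor) is the intended reading, including the correct orientation of the factor relation.
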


\begin{proof}
This follows from the Halmos-Von Neumman Theorem and Proposition
\ref{isomorphism}.
\end{proof}

We have seen several characterizations of $\mu-$mean equicontinuity throughout
the paper we now chose the ones we consider the most interesting.

\begin{corollary}
\label{principal}Let $(X,\mu,T)$ be an ergodic TDS. Then $(X,\mu,T)$ satisfies
either a property on the left or on the right (which are all equivalent per
column):%
\[
\left.
\begin{array}
[c]{c}%
\mu-\text{mean equicontinuity}\\
\text{discrete spectrum}\\
\mu-\text{null}%
\end{array}
\right\vert
\begin{array}
[c]{c}%
\mu-\text{mean sensitivity}\\
\mu-\text{mean expansivity}\\
\text{there exists a }\mu-\text{mean sensitive pair}%
\end{array}
\]

\end{corollary}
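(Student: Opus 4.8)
The plan is to assemble the corollary entirely from results already established, since it is a compilation rather than a new argument: the two claims to verify are that the three properties in each column are mutually equivalent, and that the two columns are exact logical complements (so exactly one holds).

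First I would dispatch the left column by running a cycle through the three rows. Theorem \ref{main} gives that $\mu$-mean equicontinuity implies $\mu$-nullness. Kushnirenko's theorem, as quoted in Section 3.4.2, states that an ergodic system is $\mu$-null if and only if it has discrete spectrum, linking the second and third rows in both directions. Finally Theorem \ref{discrete} supplies that discrete spectrum implies $\mu$-mean equicontinuity. Concatenating, $\mu$-mean equicontinuity $\Rightarrow$ $\mu$-null $\Leftrightarrow$ discrete spectrum $\Rightarrow$ $\mu$-mean equicontinuity, so the three left-column properties are equivalent.

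For the right column, the equivalence of $\mu$-mean sensitivity and $\mu$-mean expansivity is exactly the equivalence of parts (1) and (2) of Theorem \ref{strongsensitive}, while the equivalence of $\mu$-mean sensitivity with the existence of a $\mu$-mean sensitive pair is Theorem \ref{sensipairs}; hence the three right-column properties are equivalent. For the dichotomy itself I would invoke the equivalence of (1) and (4) in Theorem \ref{strongsensitive}, which asserts that $\mu$-mean sensitivity is precisely the negation of $\mu$-mean equicontinuity. Since these are the top rows of the right and left columns respectively and are exact complements, exactly one of them holds; combined with the within-column equivalences just established, exactly one column holds in full.

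There is no genuine obstacle at the level of this corollary — the work reduces to lining up the prior theorems and observing that Theorem \ref{strongsensitive}(4) has already packaged the dichotomy. The only substantive input being invoked is Theorem \ref{main}, whose proof ran through Kushnirenko's lemma and the factor construction $\phi_B$; here that content is simply cited.
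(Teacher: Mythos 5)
Your proposal is correct and uses exactly the same ingredients as the paper's own proof, which simply cites Theorem \ref{discrete}, Theorem \ref{strongsensitive}, Theorem \ref{main}, Kushnirenko's theorem, and Theorem \ref{sensipairs}; your write-up just makes the logical assembly (the left-column cycle, the right-column equivalences, and the dichotomy via Theorem \ref{strongsensitive}(1)$\Leftrightarrow$(4)) explicit.
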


\begin{proof}
Apply Theorem \ref{discrete}, Theorem \ref{strongsensitive}, Theorem
\ref{main}, Kushnirenko's theorem and Theorem \ref{sensipairs}.
\end{proof}

\bigskip In \cite{scarpellini1982stability}\ it was asked if every mean
equicontinuous equipped with an ergodic measure has discrete spectrum.
Corollary \ref{principal} implies the answer of this question is positive.
This question has been independently solved directly in \cite{li2013mean}
using different tools.

\bigskip

\subsection{Sensitivity for measure preserving tranformations}

Using our results we can develop a notion of sensitivity for purely measure
preserving tranformations.

A topological model of a measure preserving transformation is a TDS that is
isomorphic to the transformation.

\begin{definition}
Let $(M,\mu,T)$ be a $\mathbb{G-}$measure preserving transformation. We say
$(M,\mu,T)$ is\textbf{\ measurably sensitive} if there exists a topological
model that is $\nu-$mean sensitive (where $\nu$ is the image of $\mu$ under
the isomorphism).
\end{definition}

\begin{theorem}
\label{measuret}Let $(M,\mu,T)$ be an ergodic $\mathbb{G-}$measure preserving
transformation. The following conditions are equivalent

$1)(M,\mu,T)$ is measurably sensitive.

$2)$Every topological model is $\nu-$mean sensitive ($\nu$ is the image of
$\mu$ under the isomorphism).

$3)$Every minimal topological model is mean sensitive.

$4)$ $(M,\mu,T)$ does not have purely discrete spectrum.
\end{theorem}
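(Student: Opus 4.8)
The plan is to exploit that discrete spectrum is an invariant of the measurable isomorphism class, so it is a property of $(M,\mu,T)$ alone and is shared by every topological model. The whole statement should then reduce to Corollary \ref{principal} together with the minimal dichotomy of Theorem \ref{sensi}, the only genuinely new content being the passage between the purely topological notion in $3)$ and the measure-theoretic notions in $1),2),4)$.

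First I would record the central observation. If $(X,\nu,T)$ is any topological model of $(M,\mu,T)$, then since the two systems are measurably isomorphic they have the same spectral type; in particular $(X,\nu,T)$ has discrete spectrum if and only if $(M,\mu,T)$ does. Applying Corollary \ref{principal} to the ergodic topological system $(X,\nu,T)$ then yields that $(X,T)$ is $\nu-$mean sensitive if and only if $(X,\nu,T)$ fails to have discrete spectrum, i.e. if and only if condition $4)$ holds. Crucially, the right-hand side does not depend on the chosen model. Using that every ergodic measure preserving transformation on a standard probability space admits at least one topological model, the equivalences $1)\Leftrightarrow 4)$ and $2)\Leftrightarrow 4)$ follow immediately: if one model is $\nu-$mean sensitive then $(M,\mu,T)$ lacks discrete spectrum, whence every model is $\nu-$mean sensitive, and conversely any one model witnesses $1)$.

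For the topological notion in $3)$ I would argue $4)\Rightarrow 3)$ as follows. Let $(X,T)$ be any minimal topological model, with its (necessarily ergodic) model measure $\nu$. By Theorem \ref{sensi} a minimal system is either mean equicontinuous or mean sensitive. If it were mean equicontinuous then its Besicovitch projection $f_{b}$ would be continuous, hence $\nu-$Lusin, so $(X,T)$ would be $\nu-$mean equicontinuous and therefore, by Corollary \ref{principal}, would have discrete spectrum; but then so would $(M,\mu,T)$, contradicting $4)$. Hence $(X,T)$ must be mean sensitive, which proves $3)$.

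Finally I would prove $3)\Rightarrow 4)$ by contraposition. If $(M,\mu,T)$ has discrete spectrum, then by the Halmos--Von Neumann theorem (in Mackey's form for group actions) it is measurably isomorphic to an ergodic rotation on a compact abelian group equipped with Haar measure. Viewed as a TDS this rotation is equicontinuous and, being ergodic, minimal; an equicontinuous system is mean equicontinuous and in particular not mean sensitive. Thus we have exhibited a minimal topological model that is not mean sensitive, contradicting $3)$. The step I expect to require the most care is precisely this interface between the purely topological sensitivity of $3)$ and the measure-theoretic conditions: the bridge is that topological mean equicontinuity of a minimal model passes to $\nu-$mean equicontinuity for its invariant measure (via continuity of $f_{b}$), which is exactly what licenses the invocation of Corollary \ref{principal} in the minimal setting.
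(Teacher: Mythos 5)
Your proof is correct and follows essentially the same route as the paper: Corollary \ref{principal} drives the equivalences among $1)$, $2)$ and $4)$; Theorem \ref{sensi} together with the observation that topological mean equicontinuity (continuity of $f_{b}$) implies $\nu-$mean equicontinuity gives $4)\Rightarrow3)$; and the Halmos--Von Neumann/Mackey rotation model gives $3)\Rightarrow4)$. The only cosmetic difference is that you transfer $\nu-$mean sensitivity between models via the measurable-isomorphism invariance of discrete spectrum, where the paper instead cites Proposition \ref{isomorphism}; both serve the same purpose, and your version also makes explicit two points the paper leaves implicit (the existence of at least one topological model, and the bridge from mean equicontinuity of a minimal model to $\nu-$mean equicontinuity).
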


\begin{proof}
$1)\Leftrightarrow2)$

Apply Proposition \ref{isomorphism} and Corollary \ref{principal}.

$2)\Leftrightarrow4)$

Apply Corollary \ref{principal}.

$3)\Rightarrow4)$

If $(M,\mu,T)$ has discrete spectrum then by Halmos- Von Neumann theorem there
exists a minimal equicontinuous topological model.

$4)\Rightarrow3)$

If a minimal topological model is not mean sensitive then it is mean
equicontinuous (by Theorem \ref{sensi}). This would imply the system is $\mu
-$mean equicontinuous and hence has discrete spectrum.
\end{proof}

\section{Topological results}

In this section we will see that the characterizations of Corollary
\ref{principal} do not hold in the topological setting.

It is known that equicontinuous systems have zero topological sequence entropy
with respect to every subsequence (also known as null systems, see Definition
\ref{topologicalsequence}) and that there exist sensitive null systems
\cite{goodman1974topological}.

In this section we define another weak form of equicontinuity (diam-mean
equicontinuity) and another strong form of sensitivity (diam-mean sensitivity).

We will see that for (not necessarily minimal) subshifts we have the following
picture:
\begin{align*}
\text{mean sensitivity}  &  \Longrightarrow\text{diam-mean sensitivity}\\
&  \Longrightarrow\text{not null}\\
&  \Longrightarrow\text{sensitivity.}%
\end{align*}
and that each implication is strict. The implications for minimal subshifts
are stated in the introduction of the paper.

We will also show that almost automorphic subshifts are regular if and only if
they are diam-mean equicontinuous (Theorem \ref{aa}). Since minimal null
systems are almost automorphic \cite{huang2003null} we obtain that minimal
null subshifts are regular almost automorphic (Corollary \ref{nullquasi}).

\subsection{Diam-mean equicontinuity and diam-mean sensitivity}

\begin{definition}
Let $A\subset X.$ We denote the diameter of $A$ as $diam(A).$
\end{definition}

We introduce the following notion.

\begin{definition}
\label{weakeq}Let $(X,T)$ be a TDS. We say $x\in X$ is a \textbf{diam-mean
equicontinuous point} if for every $\varepsilon>0$ there exists $\delta>0$
such that
\[
\overline{D}\left\{  i\in\mathbb{G}:diam(T^{i}B_{\delta}(x))>\varepsilon
\right\}  <\varepsilon.
\]
We say $(X,T)$ is \textbf{diam-mean equicontinuous }if every $x\in X$ is a
diam-mean equicontinuous point. We say $(X,T)$ is \textbf{almost diam-mean
equicontinuous }if the set of diam-mean equicontinuity points is residual.
\end{definition}

\begin{remark}
Equivalently $x\in X$ is a diam-mean equicontinuous point if for every
$\varepsilon>0$ there exists $\delta>0$ such that \underline{$D$}$\left\{
i\in\mathbb{G}:diam(T^{i}B_{\delta}(x))\leq\varepsilon\right\}  \geq
1-\varepsilon.$
\end{remark}

By adapting the proof that a continuous function on a compact space is
uniformly continuous one can show a TDS is diam-mean equicontinuous if and
only if it is uniformly diam-mean equicontinuous i.e. for every $\varepsilon
>0$ there exists $\delta>0$ such that $\overline{D}\left\{  i\in
\mathbb{G}:diam(T^{i}B_{\delta}(x))>\varepsilon\right\}  <\varepsilon.$

\begin{definition}
We denote the set of diam-mean equicontinuity points by $E^{w}$ and we define
\[
E_{\varepsilon}^{w}:=\left\{  x\in X:\exists\delta>0,\text{ s.t.
}\underline{D}\left\{  i\in\mathbb{G}:diam(T^{i}B_{\delta}(x))\leq
\varepsilon\right\}  \geq1-\varepsilon\right\}  .
\]

\end{definition}

Note that $E^{w}=\cap_{\varepsilon>0}E_{\varepsilon}^{w}.$

\begin{lemma}
\label{qinvariant}Let $(X,T)$ be a TDS. The sets $E^{w}$ and $E_{\varepsilon
}^{w}$ are inversely invariant (i.e. $T^{-j}(E^{w})\subseteq E^{w}$ and
$T^{-j}(E_{\varepsilon}^{w})\subseteq E_{\varepsilon}^{w}$ for all
$j\in\mathbb{G})$ and $E_{\varepsilon}^{w}$ is open.
\end{lemma}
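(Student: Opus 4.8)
The plan is to follow the template of Lemma \ref{invariant}, replacing the two-point condition $d(T^{i}y,T^{i}z)\leq\varepsilon$ by the diameter condition $diam(T^{i}B_{\delta}(x))\leq\varepsilon$. Three elementary facts drive the argument: the diameter is monotone under set inclusion; the lower density $\underline{D}$ is monotone under set inclusion and invariant under translations (Lemma \ref{basico}, first bullet); and $T^{j}$ is (uniformly) continuous. I also use commutativity of $\mathbb{G}$, so that $T^{i}T^{j}=T^{i+j}$.

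For inverse invariance of $E_{\varepsilon}^{w}$, I would fix $j\in\mathbb{G}$, $\varepsilon>0$ and $x$ with $T^{j}x\in E_{\varepsilon}^{w}$; let $\eta>0$ be a witness, so that $S_{\eta}:=\{i:diam(T^{i}B_{\eta}(T^{j}x))\leq\varepsilon\}$ has $\underline{D}(S_{\eta})\geq1-\varepsilon$. By continuity of $T^{j}$ choose $\delta>0$ with $T^{j}B_{\delta}(x)\subseteq B_{\eta}(T^{j}x)$. Then $T^{i+j}B_{\delta}(x)=T^{i}(T^{j}B_{\delta}(x))\subseteq T^{i}B_{\eta}(T^{j}x)$, so $diam(T^{i+j}B_{\delta}(x))\leq diam(T^{i}B_{\eta}(T^{j}x))$ and hence $S_{\eta}\subseteq S^{\prime}:=\{i:diam(T^{i+j}B_{\delta}(x))\leq\varepsilon\}$. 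Writing $S:=\{i:diam(T^{i}B_{\delta}(x))\leq\varepsilon\}$, one has $j+S^{\prime}\subseteq S$, so by translation invariance and monotonicity of $\underline{D}$, $\underline{D}(S)\geq\underline{D}(j+S^{\prime})=\underline{D}(S^{\prime})\geq\underline{D}(S_{\eta})\geq1-\varepsilon$. Thus $\delta$ witnesses $x\in E_{\varepsilon}^{w}$.

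Inverse invariance of $E^{w}$ then follows from $E^{w}=\cap_{n}E_{1/n}^{w}$ and the fact that preimages commute with intersections. For openness, given $x\in E_{\varepsilon}^{w}$ with witness $\delta$, any $w\in B_{\delta/2}(x)$ satisfies $B_{\delta/2}(w)\subseteq B_{\delta}(x)$, so $T^{i}B_{\delta/2}(w)\subseteq T^{i}B_{\delta}(x)$ and the good-index set for $w$ at radius $\delta/2$ contains the good-index set for $x$ at radius $\delta$; monotonicity of $\underline{D}$ gives the required lower bound, so $B_{\delta/2}(x)\subseteq E_{\varepsilon}^{w}$.

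I expect the only genuine subtlety to be the index translation in the inverse-invariance step: because $\mathbb{G}=\mathbb{Z}_{+}^{d}$ is merely a semigroup, one cannot directly subtract $j$ from indices, and the passage from the density of $S^{\prime}$ to that of $S$ must be routed through the translation invariance of $\underline{D}$ (Lemma \ref{basico}) together with the inclusion $j+S^{\prime}\subseteq S$ and monotonicity. Everything else is a verbatim diameter-analogue of the bookkeeping in Lemma \ref{invariant}.
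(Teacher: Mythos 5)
Your proof is correct and follows essentially the same route as the paper's: pull the witness radius $\eta$ back through the continuity of $T^{j}$, transfer the density estimate via translation invariance of $\underline{D}$ (Lemma \ref{basico}), and get openness from the half-radius triangle-inequality trick. You are in fact more explicit than the paper about the index-translation step $j+S^{\prime}\subseteq S$ and the monotonicity of $\underline{D}$, which the paper's terse "we conclude" leaves implicit.
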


\begin{proof}
Let $\varepsilon>0$ and $T^{j}x\in E_{\varepsilon}^{w}.$ There exists $\eta>0$
and $S\subset\mathbb{G}$ such that $\underline{D}(S)\geq1-\varepsilon\ $and
$d(T^{i+j}x,T^{i+j}z)\leq\varepsilon$ for every $i\in S$ and $z\in B_{\eta
}(T^{j}x).$ There exists $\delta>0$ such that if $d(x,y)<\delta$ then
$d(T^{j}x,T^{j}y)<\eta.$ We conclude that $x\in E_{\varepsilon}^{w}.$ Thus
$T^{-1}(E^{w})\subseteq E^{w}.$

Let $x\in E_{\varepsilon}^{w}$ and take $\delta>0$ a constant that satisfies
the property of the definition of $E_{\varepsilon}^{w}.$ If $d(x,w)<\delta/2$
then $w\in E_{\varepsilon}^{w}.$ Indeed if $y,z\in B_{\delta/2}(w)$ then
$y,z\in B_{\delta}(x).$
\end{proof}

\begin{definition}
A TDS $(X,T)$ is \textbf{diam-mean sensitive} if there exists $\varepsilon>0$
such that for every open set $U$ we have
\[
\overline{D}\left\{  i\in\mathbb{G}:diam(T^{i}U)>\varepsilon\right\}
>\varepsilon.
\]

\end{definition}

Other strong forms of "diameter" sensitivity have been studied in
\cite{0951-7715-20-9-006} the times of separation are taken to be cofinite
(complement is finite) or syndetic (bounded gaps).

The proof of the following result is analogous to that of Theorem \ref{sensi}
(using $E_{\varepsilon}^{w}$ instead of $E_{\varepsilon}^{m}$).

\begin{theorem}
\label{dichotomy}A transitive system is either almost diam-mean equicontinuous
or diam-mean sensitive. A minimal system is either diam-mean equicontinuous or
diam-mean sensitive.
\end{theorem}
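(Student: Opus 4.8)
The plan is to mimic the proof of Theorem \ref{sensi} line by line, simply replacing the sets $E^m_\varepsilon$ (mean equicontinuity) with the sets $E^w_\varepsilon$ (diam-mean equicontinuity), since the paper explicitly tells us the argument is analogous and all the structural ingredients have already been established. First I would handle the transitive case. The Auslander--Yorke-type step is identical: if $(X,T)$ is not sensitive then by \cite{AkinAuslander} it is almost equicontinuous, and an equicontinuity point is certainly a diam-mean equicontinuity point (shrinking $\delta$ makes $\mathrm{diam}(T^iB_\delta(x))$ small for all $i$), so the system is almost diam-mean equicontinuous. Hence assume $(X,T)$ is sensitive, so $X$ has no isolated points.

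\bigskip The crux is the dichotomy for $E^w_\varepsilon$: I would show that for each $\varepsilon>0$ the set $E^w_\varepsilon$ is either empty or dense. This is where Lemma \ref{qinvariant} does all the work, and it is the analogue of the key step in Theorem \ref{sensi}. Suppose $E^w_\varepsilon$ is nonempty but not dense, so $U=X\setminus\overline{E^w_\varepsilon}$ is a nonempty open set. Since $E^w_\varepsilon$ is open (Lemma \ref{qinvariant}) and the system is transitive, there is $t\in\mathbb{G}$ with $U\cap T^{-t}(E^w_\varepsilon)\neq\emptyset$; but inverse invariance (Lemma \ref{qinvariant}) gives $T^{-t}(E^w_\varepsilon)\subseteq E^w_\varepsilon$, so this intersection lies in $U\cap E^w_\varepsilon=\emptyset$, a contradiction. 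With the dichotomy in hand, I split into two cases. If $E^w_\varepsilon$ is nonempty (hence dense) for every $\varepsilon>0$, then $E^w=\bigcap_{n\geq1}E^w_{1/n}$ is a countable intersection of open dense sets, hence residual, and the system is almost diam-mean equicontinuous. If instead some $E^w_\varepsilon$ is empty, I would unwind the definition to produce diam-mean sensitivity: emptiness means that for every open ball $B_\delta(x)$ the defining lower-density condition fails, i.e. $\underline{D}\{i:\mathrm{diam}(T^iB_\delta(x))\le\varepsilon\}<1-\varepsilon$, which is exactly $\overline{D}\{i:\mathrm{diam}(T^iB_\delta(x))>\varepsilon\}>\varepsilon$, the definition of diam-mean sensitivity.

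\bigskip For the minimal case, I would argue as in Theorem \ref{sensi}: a minimal system is in particular transitive, so by the first part it is either diam-mean sensitive or almost diam-mean equicontinuous, and it suffices to upgrade ``almost'' to ``everywhere'' under minimality. Fix $x\in X$ and $\varepsilon>0$. By almost diam-mean equicontinuity $E^w_\varepsilon$ is residual, hence dense, and by minimality the orbit of $x$ meets the open set $E^w_\varepsilon$, giving some $t\in\mathbb{G}$ with $T^tx\in E^w_\varepsilon$. Inverse invariance (Lemma \ref{qinvariant}) then forces $x\in E^w_\varepsilon$; since $\varepsilon$ was arbitrary, $x\in\bigcap_{\varepsilon>0}E^w_\varepsilon=E^w$, so every point is a diam-mean equicontinuity point and the system is diam-mean equicontinuous.

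\bigskip I do not anticipate a serious obstacle, precisely because the paper has front-loaded the two facts that make the argument work: that $E^w_\varepsilon$ is open and inversely invariant (Lemma \ref{qinvariant}), which are exactly the properties exploited in the transitive dichotomy, and the Akin--Auslander result handling the non-sensitive case. The only point requiring minor care is the bookkeeping converting between the upper-density and lower-density formulations when extracting diam-mean sensitivity from the emptiness of $E^w_\varepsilon$, together with checking that an equicontinuity point really is a diam-mean equicontinuity point so that the ``not sensitive'' branch closes correctly.
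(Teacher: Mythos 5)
Your proof is correct and follows exactly the route the paper intends: the paper's own ``proof'' of Theorem \ref{dichotomy} is just the remark that one repeats the argument of Theorem \ref{sensi} with $E_{\varepsilon}^{w}$ in place of $E_{\varepsilon}^{m}$, using Lemma \ref{qinvariant} in place of Lemma \ref{invariant}, which is precisely what you carry out (including the two small points worth checking: that equicontinuity points are diam-mean equicontinuity points, and the density bookkeeping when some $E_{\varepsilon}^{w}$ is empty).
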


\subsection{Almost automorphic systems}

\bigskip Let $(X_{1},T_{1})$ and $(X_{2},T_{2})$ be two TDSs and
$f:X_{1}\rightarrow X_{2}$ a continuous function such that $T_{2}^{i}\circ
f=f\circ T_{1}^{i}$ for every $i\in\mathbb{G}$.

If $f$ is surjective we say $f$ is a \textbf{factor map} and $(X_{2},T_{2})$
is a \textbf{factor} of $(X_{1},T_{1}).$ If $f$ is bijective we say $f$ is a
\textbf{conjugacy} and $(X_{1},T_{1})$ and $(X_{2},T_{2})$ are
\textbf{conjugate (topologically)}.

Any TDS $(X,T)$ has a unique (up to conjugacy) \textbf{maximal equicontinuous
factor}\textit{\ }i.e\textit{.} an equicontinuous factor\textit{\ }%
$f_{eq}:(X,T)\rightarrow(X_{eq},T_{eq})$ such that if $f_{2}:$
$(X,T)\rightarrow(X_{2},T_{2})$ is a factor map such that $(X_{2},T_{2})$ is
equicontinuous then there exists a factor map $g:(X_{eq},T_{eq})\rightarrow
(X_{2},T_{2})$ such that $g\circ f_{eq}=f_{2}.$ The equivalence relation whose
equivalence classes are the fibers of $f_{eq\text{ }}$is called the
\textbf{equicontinuous structure relation}. This relation can be characterized
using the regionally proximal relation. We say that $x,y\in X$ are
\textbf{regionally proximal }if there exist sequences $\left\{  t_{n}\right\}
_{n=1}^{\infty}\subset\mathbb{G}$ and $\left\{  x^{n}\right\}  _{n=1}^{\infty
},\left\{  y^{n}\right\}  _{n=1}^{\infty}\subset X$ such that
\[
\lim_{n\rightarrow\infty}x^{n}=x,\text{ }\lim_{n\rightarrow\infty}%
y^{n}=y,\text{ and }\lim_{n\rightarrow\infty}d(T^{t_{n}}x^{n},T^{t_{n}}%
y^{n})=0.
\]
The equicontinuous structure relation is the smallest closed equivalence
relation containing the regional proximal relation (\cite{auslander} Chapter 9).

For mean equicontinuous systems we can characterize the maximal equicontinuous
factor map using the Besicovitch pseudometric (Definition \ref{besi}).

\begin{proposition}
\label{feq}Let $(X,T)$ be mean equicontinuous. Then $f_{eq}=f_{b}.$
\end{proposition}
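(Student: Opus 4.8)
The goal is to show that for a mean equicontinuous system $(X,T)$, the maximal equicontinuous factor map $f_{eq}$ coincides with the Besicovitch projection $f_b$. The plan is to exploit the universal property of $f_{eq}$: since every equicontinuous factor factors through $f_{eq}$, it suffices to show two things, namely that $(X/d_b, d_b)$ together with the induced map is an equicontinuous factor of $(X,T)$, and that $f_b$ identifies \emph{exactly} the same pairs as $f_{eq}$. Equivalently, I would prove that the equicontinuous structure relation (the fibers of $f_{eq}$) equals the relation $\{(x,y): d_b(x,y)=0\}$ (the fibers of $f_b$).

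First I would check that $(X/d_b, d_b)$ is genuinely an equicontinuous factor. By Remark \ref{uniform}, mean equicontinuity implies $f_b$ is (uniformly) continuous, so $X/d_b$ is a compact metric space and $f_b$ is a continuous factor map intertwining the actions (the action on $X/d_b$ is well-defined because $d_b$ is $T$-invariant, as $\Delta_\delta(T^jx,T^jy)$ is a translate of $\Delta_\delta(x,y)$ and lower/upper density is translation-invariant by Lemma \ref{basico}). Moreover the induced map on $X/d_b$ is an isometry for $d_b$, hence equicontinuous. By the universal property of the maximal equicontinuous factor, there is a factor map $g:(X_{eq},T_{eq})\to(X/d_b,d_b)$ with $g\circ f_{eq}=f_b$. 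This immediately gives one inclusion of fibers: if $f_{eq}(x)=f_{eq}(y)$ then $f_b(x)=f_b(y)$, i.e. $d_b(x,y)=0$.

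The reverse inclusion is the substantive part. I must show that $d_b(x,y)=0$ implies $f_{eq}(x)=f_{eq}(y)$, which I would establish by showing that $d_b(x,y)=0$ forces $(x,y)$ to lie in the regionally proximal relation, whose closed equivalence-relation closure is exactly the equicontinuous structure relation. Suppose $d_b(x,y)=0$; then for each $\delta>0$ the set $\{i: d(T^ix,T^iy)\le\delta\}$ has full upper (indeed lower) density, so it is nonempty and in fact contains points along which $T^ix$ and $T^iy$ are arbitrarily close. The idea is to feed this into the definition of regional proximality: taking $x^n=x$, $y^n=y$ and choosing $t_n$ from $\{i: d(T^ix,T^iy)\le 1/n\}$ yields $\lim_n d(T^{t_n}x,T^{t_n}y)=0$, witnessing that $(x,y)$ is regionally proximal. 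Hence $(x,y)$ lies in the equicontinuous structure relation, so $f_{eq}(x)=f_{eq}(y)$.

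Combining the two inclusions, the fibers of $f_{eq}$ and $f_b$ coincide, and since both are continuous factor maps onto compact metric spaces and $g\circ f_{eq}=f_b$ with $g$ injective on points (by the fiber equality), $g$ is a conjugacy identifying $(X_{eq},T_{eq})$ with $(X/d_b,d_b)$; under this identification $f_{eq}=f_b$. The step I expect to be the main obstacle is verifying cleanly that $d_b(x,y)=0$ produces a genuine regionally proximal pair rather than merely a mean-proximal one — one must be careful that the density condition supplies an honest sequence $t_n$ along which the orbits converge, and that the equicontinuous structure relation (the \emph{closed} equivalence relation generated by regional proximality) does not strictly enlarge the $d_b$-kernel; here the fact that $\{d_b=0\}$ is already a closed $T$-invariant equivalence relation (closedness following from continuity of $d_b$, which in turn uses mean equicontinuity) is what guarantees the two relations match exactly.
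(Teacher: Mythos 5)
Your proof is correct and follows essentially the same route as the paper's: one inclusion via the universal property of $f_{eq}$ (using continuity of $f_b$ from Remark \ref{uniform} and the fact that $T$ acts by $d_b$-isometries on $X/d_b$), and the reverse inclusion by observing that $d_b(x,y)=0$ yields a sequence $t_n$ with $d(T^{t_n}x,T^{t_n}y)\to 0$, so $(x,y)$ is regionally proximal and hence identified by $f_{eq}$. Your closing worry about the closed equivalence relation generated by regional proximality is already resolved by the first inclusion, so no extra argument is needed there.
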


\begin{proof}
We have that $f_{b}$ is continuous and $(X/d_{b},d_{b},T)$ is equicontinuous
so if $f_{eq}(x)=f_{eq}(y)$ then $f_{b}(x)=f_{b}(y).$ If $f_{b}(x)=f_{b}(y),$
then $d_{b}(x,y)=0$ so there exists a sequence $\left\{  t_{n}\right\}  $ such
that $\lim_{n\rightarrow\infty}d(T^{t_{n}}x,T^{t_{n}}y)=0;$ hence $x$ and $y$
are regionally proximal, hence $f_{eq}(x)=f_{eq}(y).$
\end{proof}

A transitive equicontinuous system is conjugate to a system where $\mathbb{G}$
acts as translations on a compact metric abelian group. If $(X,T)$ is a
transitive TDS we denote the maximal equicontinuous factor by $G_{eq}$ (since
it is a group)$.$ The TDS $(G_{eq},T_{eq})$ has a unique ergodic invariant
probability measure, the normalized Haar measure on $G_{eq}$; this measure has
full support and will be denoted by $\nu_{eq}.$

\begin{definition}
\label{auto} We say a TDS is \textbf{almost automorphic} if it is an almost
1-1 extension of its maximal equicontinuous factor i.e. if $f_{eq}^{-1}%
f_{eq}(x)=\left\{  x\right\}  $ on a residual set. A transitive almost
automorphic TDS is \textbf{regular }if
\[
\nu_{eq}\left\{  g\in G_{eq}:f_{eq}^{-1}(g)\text{ is a singleton}\right\}
=1.
\]

\end{definition}

\bigskip It is not difficult to see that if there exists a transitive point
$x\in X$ such that $f_{eq}^{-1}f_{eq}(x)=\left\{  x\right\}  $ then $(X,T)$ is
almost automorphic. Transitive almost automorphic systems are minimal.

Two well known families of almost automorphic systems are the Sturmian
subshifts (maximal equicontinuous factor is an irrational circle rotation) and
Toeplitz subshifts (maximal equicontinuous factor is an odometer, see Section
4.4 for definition and examples).

An important class of TDS are the shift systems. Let $\mathcal{A}$ be a
compact metric space (with metric $d_{\mathcal{A}}$). For $x\in\mathcal{A}%
^{\mathbb{G}}$ and $i\in\mathbb{G}$ we use $x_{i}$ to denote the $i$th
coordinate of $x$ and
\[
\sigma:=\left\{  \sigma^{i}:\mathcal{A}^{\mathbb{G}}\rightarrow\mathcal{A}%
^{\mathbb{G}}\text{ }\mid(\sigma^{i}x)_{j}=x_{i+j}\text{ for all }%
x\in\mathcal{A}^{\mathbb{G}}\text{and }j\in\mathbb{G}\right\}
\]
to denote the \textbf{shift maps}$.$ Using the (Cantor) product topology
generated by the topology of $\mathcal{A}$, we have that $\mathcal{A}%
^{\mathbb{G}}$ is a compact metrizable space. A subset $X\subset
\mathcal{A}^{\mathbb{G}}$ is a \textbf{general shift system} if it is closed
and $\sigma-$invariant$;$ in this case $(X,\sigma)$ is a TDS. Every TDS is
conjugate to a general shift system (by mapping every point to its orbit).

A general shift system is a \textbf{subshift} if $\mathcal{A}$ is finite.

\begin{remark}
\label{qeq}Let $X\subset\mathcal{A}^{\mathbb{G}}$ be a general shift system.
We have that $(X,\sigma)$ is diam-mean equicontinuous if and only if for all
$\varepsilon>0$ there exists $\delta>0$ such that for all $x\in X$ there
exists $S\subset\mathbb{G}$ with \underline{$D$}$(S)\geq1-\varepsilon,$ such
that if $d(x,y)\leq\delta$ then $d_{\mathcal{A}}(x_{i},y_{i})\leq\varepsilon$
$\ $for all $i\in S.$
\end{remark}

\begin{definition}
\bigskip Let $(X,\sigma)$ be a transitive general shift system. We define
\[
\mathcal{D}:=\left\{  g\in G_{eq}:\exists x,y\in X\text{ such that }%
f_{eq}(x)=f_{eq}(y)=g\text{ and }x_{0}\neq y_{0}\right\}  ,
\]
where $x_{0}$ and $y_{0}$ represent the $0th$ coordinates (or the value at the
identity of the semigroup) of $x$ and $y$ respectively.
\end{definition}

In general $\mathcal{D}$ is first category. If $\mathcal{A}$ is finite and
$f_{eq}(x)\notin\mathcal{D}$ there exists a neighbourhood $U_{f_{eq}(x)}$ such
that if $f_{eq}(y)\in$ $U_{f_{eq}(x)}$ then $x_{0}=y_{0}$. So if $(X,\sigma)$
is an almost automorphic subshift then $\mathcal{D}$ is closed and nowhere
dense. Also note that $(X,\sigma)$ is regular if and only if $\nu
_{eq}(\mathcal{D})=0.$

\begin{lemma}
\label{lemmasub}Let $(X,\sigma)$ be a minimal almost automorphic subshift and
$F\subset\mathbb{G}$ a finite set. Then for all $g\in G_{eq},$ there exists
$k\in\mathbb{G}$ such that $T_{eq}^{k+i}(g)\notin D$ for all $i\in F$.
\end{lemma}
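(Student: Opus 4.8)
The plan is to exploit that for an almost automorphic subshift the set $\mathcal{D}$ is closed and nowhere dense (as observed in the discussion immediately preceding the lemma) together with the minimality of the maximal equicontinuous factor. Recall that $(G_{eq},T_{eq})$ is a compact abelian group on which $\mathbb{G}$ acts by translations, say $T_{eq}^{i}(g)=g+\alpha_{i}$ where $i\mapsto\alpha_{i}$ is a homomorphism of $\mathbb{G}$ into $G_{eq}$; in particular each $T_{eq}^{i}$ is a \emph{homeomorphism} of $G_{eq}$, its inverse being translation by $-\alpha_{i}$. Since $(X,\sigma)$ is minimal, so is its factor $(G_{eq},T_{eq})$, and hence for the given $g$ the orbit $\mathcal{O}:=\left\{T_{eq}^{k}(g):k\in\mathbb{G}\right\}$ is dense in $G_{eq}$.

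First I would pull the obstruction set back along the finitely many times in $F$. For each $i\in F$ the set $(T_{eq}^{i})^{-1}(\mathcal{D})$ is the image of $\mathcal{D}$ under a homeomorphism, hence is again closed and nowhere dense. Therefore the finite union
\[
C:=\bigcup_{i\in F}(T_{eq}^{i})^{-1}(\mathcal{D})
\]
is closed and nowhere dense (a finite union of closed nowhere dense sets has empty interior, since the complement is a finite intersection of dense open sets and so is dense open). Consequently $U:=G_{eq}\diagdown C$ is open and dense, and in particular nonempty.

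Next I would combine this with the density of the orbit: a dense set meets every nonempty open set, so there exists $k\in\mathbb{G}$ with $T_{eq}^{k}(g)\in U$. By the definition of $U$ this means $T_{eq}^{k}(g)\notin(T_{eq}^{i})^{-1}(\mathcal{D})$ for every $i\in F$, and applying the homeomorphism $T_{eq}^{i}$ yields $T_{eq}^{k+i}(g)=T_{eq}^{i}\big(T_{eq}^{k}(g)\big)\notin\mathcal{D}$ for all $i\in F$, which is exactly the assertion of the lemma.

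The only point deserving attention, rather than a genuine obstacle, is that the time monoid $\mathbb{G}=\mathbb{Z}_{+}^{d}$ need not be a group, so one cannot directly invert the dynamics on $X$; however on $G_{eq}$ the maps $T_{eq}^{i}$ are \emph{group translations} and hence invertible regardless. It is precisely this invertibility that legitimizes pulling $\mathcal{D}$ back by $(T_{eq}^{i})^{-1}$ while preserving the ``closed and nowhere dense'' property, and thus makes the category/density argument go through.
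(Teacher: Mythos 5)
Your proof is correct and follows essentially the same route as the paper: the paper likewise forms the finite union $\mathcal{D}^{\prime}:=\cup_{i\in F}T_{eq}^{-i}(\mathcal{D})$, observes it is closed and nowhere dense, and uses density of the orbit of $g$ in $G_{eq}$ to pick $k$ with $T_{eq}^{k}(g)\notin\mathcal{D}^{\prime}$. Your additional remarks (preimages under the group translations $T_{eq}^{i}$ preserve nowhere density, and minimality passes to the factor) are exactly the details the paper leaves implicit.
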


\begin{proof}
Let $\mathcal{D}^{\prime}:=\cup_{i\in F}T_{eq}^{-i}(\mathcal{D}).$ This means
that $\mathcal{D}^{\prime}$ is also closed and nowhere dense. Thus there
exists $k$ such that $T_{eq}^{k}(g)\in G_{eq}-\mathcal{D}^{\prime},$ hence
$T_{eq}^{k+i}(g)\notin\mathcal{D}$ for all $i\in F.$
\end{proof}

\begin{theorem}
\label{aa}Let $(X,\sigma)$ be a minimal almost automorphic subshift. Then
$(X,\sigma)$ is regular if and only if it is diam-mean equicontinuous.
\end{theorem}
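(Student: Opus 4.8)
The plan is to prove both implications through the ``ambiguity sets'' of the subshift and to read these off from the orbit of $f_{eq}(x)$ relative to $\mathcal{D}$. First I would record the reformulation, valid for subshifts via Remark \ref{qeq}: fixing a threshold $\varepsilon_{0}>0$ so small that two points differing in the $0$th coordinate are at distance $>\varepsilon_{0}$, and writing $\Delta_{\delta}(x):=\{i\in\mathbb{G}:\exists y\in B_{\delta}(x),\ y_{i}\neq x_{i}\}$, one has $diam(\sigma^{i}B_{\delta}(x))>\varepsilon_{0}$ iff $i\in\Delta_{\delta}(x)$. Hence $(X,\sigma)$ is diam-mean equicontinuous iff for every $\varepsilon>0$ there is $\delta>0$ with $\overline{D}(\Delta_{\delta}(x))<\varepsilon$ for all $x$. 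I would also set $W_{a}:=f_{eq}(\{z:z_{0}=a\})$, a compact subset of $G_{eq}$; then $G_{eq}=\bigcup_{a}W_{a}$ and $\mathcal{D}=\bigcup_{a\neq b}(W_{a}\cap W_{b})$, so $\mathcal{D}$ is closed, and $g\notin\mathcal{D}$ means $g$ lies in exactly one $W_{a}$, i.e. the $0$th symbol is constant on $f_{eq}^{-1}(g)$.

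For the implication regular $\Rightarrow$ diam-mean equicontinuous, fix $\varepsilon>0$. Since $\mathcal{D}$ is closed with $\nu_{eq}(\mathcal{D})=0$, I would choose an open $V\supseteq\mathcal{D}$ with $\nu_{eq}(\overline V)<\varepsilon$ and a continuous $\phi\colon G_{eq}\to[0,1]$ equal to $1$ on $\overline V$ with $\int\phi\,d\nu_{eq}<\varepsilon$. On the compact set $K:=G_{eq}\setminus V$ every point lies in a single $W_{a}$, so $K\cap W_{a}$ and $W_{b}$ are disjoint compacta for $a\neq b$; let $\rho>0$ bound their distances from below. Because $T_{eq}$ acts by isometries and $f_{eq}$ is uniformly continuous with some modulus $\omega$, choosing $\delta$ with $\omega(\delta)<\rho$ forces $y_{i}=x_{i}$ whenever $y\in B_{\delta}(x)$ and $T_{eq}^{i}f_{eq}(x)\in K$ (otherwise $T_{eq}^{i}f_{eq}(y)$ would lie in some $W_{y_{i}}$ within $\omega(\delta)$ of $K\cap W_{x_{i}}$). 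Thus $\Delta_{\delta}(x)\subseteq\{i:T_{eq}^{i}f_{eq}(x)\in V\}$, and unique ergodicity of $(G_{eq},T_{eq},\nu_{eq})$ gives $\overline{D}\{i:T_{eq}^{i}f_{eq}(x)\in V\}\le\int\phi\,d\nu_{eq}<\varepsilon$ uniformly in $x$, which is exactly the required bound.

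For the converse I would prove the contrapositive: if $c:=\nu_{eq}(\mathcal{D})>0$ then $(X,\sigma)$ is not diam-mean equicontinuous. By the Birkhoff theorem for the ergodic rotation $(G_{eq},T_{eq},\nu_{eq})$ applied to $1_{\mathcal{D}}$, pick $g_{0}$ with $\overline{D}\{i:T_{eq}^{i}g_{0}\in\mathcal{D}\}=c$. Given any $\delta>0$, let $F_{M}$ be a window with $[x\!\mid_{F_{M}}]\subseteq B_{\delta}(x)$; by Lemma \ref{lemmasub} there is $k$ with $T_{eq}^{k+j}g_{0}\notin\mathcal{D}$ for all $j\in F_{M}$, and since upper density is translation invariant (Lemma \ref{basico}) the point $g_{1}:=T_{eq}^{k}g_{0}$ still satisfies $\overline{D}\{i:T_{eq}^{i}g_{1}\in\mathcal{D}\}=c$ while having a clean window $F_{M}$. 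Choosing any $x$ with $f_{eq}(x)=g_{1}$, for each $i$ with $T_{eq}^{i}g_{1}\in\mathcal{D}$ there is $q$ with $f_{eq}(q)=T_{eq}^{i}g_{1}$ and $q_{0}\neq x_{i}$; then $y:=\sigma^{-i}q$ lies in $f_{eq}^{-1}(g_{1})$, agrees with $x$ at every clean coordinate (in particular on $F_{M}$, so $y\in B_{\delta}(x)$), yet $y_{i}=q_{0}\neq x_{i}$. Hence $\{i:T_{eq}^{i}g_{1}\in\mathcal{D}\}\subseteq\Delta_{\delta}(x)$ and $\overline{D}(\Delta_{\delta}(x))\ge c$; as $\delta$ was arbitrary, the uniform bound fails for $\varepsilon=\min\{\varepsilon_{0},c\}$.

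The crux, and the step I expect to fight with, is the passage from the \emph{global} ambiguity recorded by $\mathcal{D}$ to a \emph{local} failure inside a single ball $B_{\delta}(x)$ for a set of coordinates of positive \emph{upper} density. Since diam-mean equicontinuity is governed by upper density rather than by Birkhoff averages, no invariant measure on the fibre product can be used directly: for an irregular system whose factor map is still a measurable isomorphism there need be no invariant measure charging $\{(p,q):p_{0}\neq q_{0}\}$, so one is forced to exhibit an actual point, which is precisely what the combination of Lemma \ref{lemmasub} with a $g_{0}$ realizing the upper density $c$ supplies. A secondary issue is that the branch $y=\sigma^{-i}q$ uses invertibility of $\sigma$; for one-sided ($\mathbb{Z}_{+}^{d}$) subshifts I would first pass to the natural extension, which is again a minimal almost automorphic subshift with the same $\mathcal{D}$ and $\nu_{eq}$, and transport the conclusion back, consistently with the extension to abelian group actions discussed in Section 5.
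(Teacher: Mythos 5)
Your proposal is correct in substance and follows the same skeleton as the paper's proof: both directions run through the closed set $\mathcal{D}$, Lemma \ref{lemmasub}, Remark \ref{qeq}, and ergodic averages on $(G_{eq},T_{eq},\nu_{eq})$. Two differences are worth recording. First, in the direction regular $\Rightarrow$ diam-mean equicontinuous, your compactness argument with the sets $W_{a}:=f_{eq}(\{z:z_{0}=a\})$ is a genuine improvement in rigor: the paper's corresponding step asserts that if $B_{\delta}(T_{eq}^{i}f_{eq}(x))\cap\mathcal{D}=\emptyset$ and $d(T_{eq}^{i}f_{eq}(x),T_{eq}^{i}f_{eq}(y))\leq\delta$ then $x_{i}=y_{i}$, which is not automatic with the same $\delta$ (the symbol map is only locally constant on $\mathcal{D}^{c}$, and balls in $G_{eq}$ need not be connected), whereas your separation constant $\rho$ between the disjoint compacta $K\cap W_{a}$ and $W_{b}$, combined with the uniform bound coming from unique ergodicity, supplies exactly the missing uniformization. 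Second, your converse is run as a contrapositive -- from $\nu_{eq}(\mathcal{D})=c>0$ you exhibit, inside a single $\delta$-ball, fiber-mates disagreeing on a set of upper density $c$ -- which is in substance the paper's Proposition \ref{strongbar} (not regular implies \underline{diam-mean} sensitive), while the paper's proof of Theorem \ref{aa} argues the implication directly via genericity of $f_{eq}(x)$; the two are interchangeable here.

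The one step that needs repair is $y:=\sigma^{-i}q$. In the paper's setting $\mathbb{G}=\mathbb{Z}_{+}^{d}$, so $\sigma$ is not invertible, and your fallback via the natural extension does not close the loop as stated: to \emph{transport the conclusion back} you would need that diam-mean equicontinuity of the one-sided system implies that of its two-sided extension, and that implication (one-sided balls and densities versus two-sided ones) is not obvious -- it is essentially a statement of the same difficulty as the theorem itself. Fortunately no extension is needed: by minimality $\sigma^{i}$ is surjective on $X$, so there exists $y$ with $\sigma^{i}y=q$, and then $T_{eq}^{i}f_{eq}(y)=f_{eq}(q)=T_{eq}^{i}g_{1}$ forces $f_{eq}(y)=g_{1}$ because $T_{eq}^{i}$ is injective (a translation on the group $G_{eq}$). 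With that substitution your construction goes through verbatim, including the agreement of $y$ with $x$ at clean coordinates. This surjectivity-plus-injectivity fact is also what implicitly justifies the paper's own claim that $\sigma^{i}(x)\in f_{eq}^{-1}(\mathcal{D})$ if and only if some $y$ in the fiber of $x$ satisfies $y_{i}\neq x_{i}$.
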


\begin{proof}
Suppose that $\nu_{eq}(\mathcal{D})=0.$ Hence for every $\varepsilon>0$ there
exists $\delta>0$ such that if $U:=\left\{  g\mid d(g,\mathcal{D})\leq
\delta\right\}  $ then $\nu_{eq}(U)<\varepsilon.$

Let $x\in X.$ We define $S_{x}:=\left\{  i\in\mathbb{G}\mid T_{eq}^{i}%
f_{eq}x\notin U\right\}  ;$ hence \underline{$D$}$(S)=1-\nu(U)\geq
1-\varepsilon$ (using the pointwise ergodic theorem)$.$ For every $i\in S_{x}$
we have that $B_{\delta}(T_{eq}^{i}f_{eq}(x))\cap\mathcal{D=\emptyset}$. This
implies that if $d(T_{eq}^{i}f_{eq}(x),T_{eq}^{i}f_{eq}(y))\leq\delta$ then
$x_{i}=y_{i}.$ There exists $\delta^{\prime}>0$ such that if $d(x,y)\leq
\delta^{\prime}$ then $d(T_{eq}^{i}f_{eq}(x),T_{eq}^{i}f_{eq}(y))\leq\delta.$
We can now prove that $x$ is a diam-mean equicontinuous point. If
$d(x,y)\leq\delta^{\prime}$ then $d(T_{eq}^{i}f_{eq}(x),T_{eq}^{i}%
f_{eq}(y))\leq\delta$ for every $i,$ and hence $x_{i}=y_{i}$ for every $i\in
S_{x}.$ Using Remark \ref{qeq} we conclude $(X,T)$ is diam-mean equicontinuous.

Now suppose $(X,\sigma)$ is diam-mean equicontinuous. We have that
$f_{b}=f_{eq}$(Proposition \ref{feq}).

Let $\varepsilon>0$ and $\delta>0$ given by Remark \ref{qeq}.

Let $x\in X.$ Note that $\sigma^{i}(x)\in f_{eq}^{-1}(\mathcal{D})$ if and
only if there exists $y\in f_{b}^{-1}\circ f_{b}(x)$ such that $y_{i}\neq
x_{i}$ $.$

Using Lemma \ref{lemmasub} there exists $k\in\mathbb{G}$ such that if
$f_{eq}(x)=f_{eq}(y)$ then $d(\sigma^{k}y,\sigma^{k}x)\leq\delta.$

Since $(X,\sigma)$ is a diam-mean equicontinuous there exists $S\subset$
$\mathbb{G}$, with \underline{$D$}$(S)\geq1-\varepsilon,$ such that if
$d(\sigma^{k}x,\sigma^{k}y)\leq\delta$ then $d(x_{i},y_{i})\leq\varepsilon$
for all $i\in S.$

This means that for every $\varepsilon>0$
\[
\overline{D}\left\{  i:\exists y\in f_{b}^{-1}\circ f_{b}(x)\text{ s.t.
}d_{\mathcal{A}}(x_{i},y_{i})>\varepsilon\right\}  \leq\varepsilon.
\]

Using that
\[
\mathcal{D}=\cup_{n\in\mathbb{N}}\left\{  g\in G_{eq}:\exists x,y\text{ s.t
}f_{eq}(x)=f_{eq}(y)=g\text{ and }d_{\mathcal{A}}(x_{0},y_{0})>1/n\right\}  ,
\]
and the pointwise ergodic theorem we conclude that $\nu_{eq}(\mathcal{D})=0.$
\end{proof}

We do not know if every minimal diam-mean equicontinuous TDS\ is almost automorphic.

\begin{definition}
A TDS is \textbf{\underline{\textbf{diam-mean}} equicontinuous} if for every
$\varepsilon>0$ there exists $\delta>0$ such that for every $x\in X$ we have
\underline{$D$}$\left\{  i\in\mathbb{G}:diam(T^{i}B_{\delta}(x))>\varepsilon
\right\}  <\varepsilon.$
\end{definition}

It is clear that every diam-mean equicontinuous system is
\underline{diam-mean} equicontinuous. We do not know if the converse is true
in general. We will show that under some conditions they are equivalent.

\begin{definition}
A TDS $(X,T)$ is \underline{diam-mean} sensitive if there exists
$\varepsilon>0$ such that for every open set $U$ we have \underline{$D$%
}$\left\{  i\in\mathbb{G}:diam(T^{i}U))>\varepsilon\right\}  >\varepsilon.$
\end{definition}

The proof of the following theorem is analogous to the proofs of Theorems
\ref{sensi} and \ref{dichotomy}.

\begin{proposition}
\label{lowerd}A minimal TDS is either \underline{diam-mean} equicontinuous or
\underline{diam-mean} sensitive.
\end{proposition}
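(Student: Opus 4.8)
The plan is to mimic the proofs of Theorems \ref{sensi} and \ref{dichotomy}, replacing the upper density $\overline{D}$ in the auxiliary ``equicontinuity point'' sets by the lower density $\underline{D}$. Concretely, for $\varepsilon>0$ I would introduce
\[
\underline{E}_{\varepsilon}^{w}:=\left\{x\in X:\exists\delta>0\text{ s.t. }\underline{D}\left\{i\in\mathbb{G}:diam(T^{i}B_{\delta}(x))>\varepsilon\right\}<\varepsilon\right\},
\]
the pointwise analogue of \underline{diam-mean} equicontinuity. The goal is to show that for a minimal system each $\underline{E}_{\varepsilon}^{w}$ is either empty or all of $X$, and then to read off the two alternatives of the dichotomy according to whether every $\underline{E}_{\varepsilon}^{w}$ is nonempty or some $\underline{E}_{\varepsilon}^{w}$ is empty.

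First I would establish, exactly as in Lemma \ref{qinvariant}, that $\underline{E}_{\varepsilon}^{w}$ is open and inversely invariant. Openness follows because if $\delta$ witnesses $x\in\underline{E}_{\varepsilon}^{w}$ and $d(x,w)<\delta/2$ then $B_{\delta/2}(w)\subseteq B_{\delta}(x)$, so $diam(T^{i}B_{\delta/2}(w))\leq diam(T^{i}B_{\delta}(x))$ for every $i$; since $\underline{D}$ is monotone under inclusion of sets, $\delta/2$ witnesses $w\in\underline{E}_{\varepsilon}^{w}$. Inverse invariance follows by the translation trick of Lemma \ref{qinvariant}: if $T^{j}x\in\underline{E}_{\varepsilon}^{w}$ with witness $\eta$, choose $\delta$ with $T^{j}B_{\delta}(x)\subseteq B_{\eta}(T^{j}x)$; then $\{i:diam(T^{i+j}B_{\delta}(x))>\varepsilon\}\subseteq\{i:diam(T^{i}B_{\eta}(T^{j}x))>\varepsilon\}$, and using $\underline{D}(j+S)=\underline{D}(S)$ (Lemma \ref{basico}) together with monotonicity one gets $x\in\underline{E}_{\varepsilon}^{w}$. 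With these two properties, minimality forces the empty-or-all alternative: if $\underline{E}_{\varepsilon}^{w}$ is nonempty then every orbit meets this nonempty open set, and inverse invariance pushes membership back to the orbit's starting point, so $\underline{E}_{\varepsilon}^{w}=X$.

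Next I would extract the two cases. If $\underline{E}_{\varepsilon}^{w}=X$ for every $\varepsilon>0$, then every point is a pointwise \underline{diam-mean} equicontinuity point, and a finite-subcover argument (using openness exactly as above) upgrades the pointwise $\delta$'s to a single $\delta$ valid for all $x$, which is precisely \underline{diam-mean} equicontinuity. If instead $\underline{E}_{\varepsilon}^{w}=\emptyset$ for some $\varepsilon$, then for every $x$ and every $\delta$ one has $\underline{D}\{i:diam(T^{i}B_{\delta}(x))>\varepsilon\}\geq\varepsilon$; since every open set $U$ contains a ball $B_{\delta}(x)$ with $diam(T^{i}B_{\delta}(x))\leq diam(T^{i}U)$, monotonicity yields $\underline{D}\{i:diam(T^{i}U)>\varepsilon\}\geq\varepsilon$ for every open $U$.

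The main obstacle is that, unlike in Theorem \ref{sensi}, both alternatives here are phrased with the same (lower) density, so the clean duality $\underline{D}(S)+\overline{D}(S^{c})=1$ that converted ``not equicontinuous'' into ``sensitive'' is unavailable; emptiness of $\underline{E}_{\varepsilon}^{w}$ only produces the non-strict bound $\underline{D}\{i:diam(T^{i}U)>\varepsilon\}\geq\varepsilon$, whereas \underline{diam-mean} sensitivity demands a strict inequality. I would resolve this by shrinking the constant: for any $\varepsilon'<\varepsilon$ one has $\{i:diam(T^{i}U)>\varepsilon\}\subseteq\{i:diam(T^{i}U)>\varepsilon'\}$, so $\underline{D}\{i:diam(T^{i}U)>\varepsilon'\}\geq\varepsilon>\varepsilon'$ for every open $U$, which is exactly \underline{diam-mean} sensitivity with constant $\varepsilon'$. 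Mutual exclusivity of the two alternatives is immediate, since equicontinuity with parameter $\varepsilon_{0}$ applied to a ball directly contradicts sensitivity with constant $\varepsilon_{0}$.
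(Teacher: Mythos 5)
Your overall strategy is exactly the one the paper intends: its ``proof'' of Proposition \ref{lowerd} is literally the remark that the argument is analogous to Theorems \ref{sensi} and \ref{dichotomy}, and you carry out that analogy (openness and inverse invariance of a pointwise set, minimality forcing it to be empty or all of $X$, compactness to uniformize the $\delta$'s). You also correctly identified the one place where the analogy is \emph{not} automatic: since both alternatives are stated with lower density, the duality $\underline{D}(S)+\overline{D}(S^{c})=1$ used in Theorem \ref{sensi} is unavailable, and your constant-shrinking trick (passing from $\varepsilon$ to $\varepsilon'<\varepsilon$) is a legitimate way to recover the strict inequality that \underline{diam-mean} sensitivity demands.

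There is, however, one step whose justification fails as written: inverse invariance of $\underline{E}_{\varepsilon}^{w}$. In your bad-set formulation you establish $A:=\{i:diam(T^{i+j}B_{\delta}(x))>\varepsilon\}\subseteq B:=\{i:diam(T^{i}B_{\eta}(T^{j}x))>\varepsilon\}$, whence $\underline{D}(j+A)=\underline{D}(A)\leq\underline{D}(B)<\varepsilon$. But the set you must bound is $S_{x}:=\{m\in\mathbb{G}:diam(T^{m}B_{\delta}(x))>\varepsilon\}$, and $j+A$ only accounts for $S_{x}\cap(j+\mathbb{G})$: the indices $m\notin j+\mathbb{G}$ are completely uncontrolled, and monotonicity applied to the containment $j+A\subseteq S_{x}$ goes the wrong way (it bounds $\underline{D}(S_{x})$ from \emph{below}). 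The repair is short, and there are two ways to do it. Either pass to complements via Lemma \ref{basico}: $x\in\underline{E}_{\varepsilon}^{w}$ iff $\overline{D}\{m:diam(T^{m}B_{\delta}(x))\leq\varepsilon\}>1-\varepsilon$, and for the good sets the containment $j+\{i:diam(T^{i}B_{\eta}(T^{j}x))\leq\varepsilon\}\subseteq\{m:diam(T^{m}B_{\delta}(x))\leq\varepsilon\}$ together with translation invariance and monotonicity of $\overline{D}$ gives the claim at once --- this is precisely what the paper's Lemma \ref{qinvariant} does, which is why its good-set formulation avoids the issue. Or, alternatively, observe that for $\mathbb{G}=\mathbb{Z}_{+}^{d}$ with the cube F\o lner sequence the set $\mathbb{G}\setminus(j+\mathbb{G})$ has zero upper density, so $\underline{D}(S_{x})\leq\underline{D}(j+A)+\overline{D}(\mathbb{G}\setminus(j+\mathbb{G}))<\varepsilon$. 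With either patch your proof is complete and coincides with the argument the paper has in mind.
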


\begin{definition}
Let $(X,T)$ be a TDS and $x\in X$. We denote the orbit of $x$ with $o_{T}(x)$.
\end{definition}

\begin{proposition}
\label{y}Let $(X,\sigma)$ be a transitive almost automorphic subshift . The
function $h:\mathcal{D}^{c}\rightarrow\mathcal{A}$ defined as $h(g)=(f_{eq}%
^{-1}(g))_{0}$ is continuous and there exists $y\in X$ such that

$\cdot y$ is transitive.

$\cdot f_{eq}^{-1}f_{eq}(y)$ is a singleton, in other words $o_{\sigma_{eq}%
}(y)\cap\mathcal{D=\emptyset}.$
\end{proposition}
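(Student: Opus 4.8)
The plan is to treat the two assertions separately, leaning on the fact established just before the statement that for a subshift $\mathcal{D}$ is closed and nowhere dense in $G_{eq}$, and on the defining feature of $\mathcal{D}^{c}$: if $g\notin\mathcal{D}$ then every point of the fibre $f_{eq}^{-1}(g)$ has the same $0$th coordinate. For continuity I first note that $h$ is well defined on $\mathcal{D}^{c}$ precisely because $(f_{eq}^{-1}(g))_{0}$ does not depend on the chosen preimage when $g\notin\mathcal{D}$, and then I show $h$ is locally constant, which suffices since $\mathcal{A}$ is finite. Fixing $g\in\mathcal{D}^{c}$ and a point $x\in f_{eq}^{-1}(g)$, the remark preceding the statement (valid because $\mathcal{A}$ is finite and $g\notin\mathcal{D}$) yields a neighbourhood $U_{g}$ of $g$ in $G_{eq}$ such that $f_{eq}(w)\in U_{g}$ forces $w_{0}=x_{0}$. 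Hence for every $g'\in U_{g}\cap\mathcal{D}^{c}$ and every $z\in f_{eq}^{-1}(g')$ one gets $z_{0}=x_{0}$, so $h(g')=x_{0}=h(g)$; thus $h$ is constant on $U_{g}\cap\mathcal{D}^{c}$ and therefore continuous.

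The heart of the second claim is the implication that if the $T_{eq}$-orbit of $g:=f_{eq}(y)$ misses $\mathcal{D}$ (this is what $o_{\sigma_{eq}}(y)\cap\mathcal{D}=\emptyset$ means, interpreting $o_{\sigma_{eq}}(y)=\{T_{eq}^{i}g:i\in\mathbb{G}\}\subset G_{eq}$), then $f_{eq}^{-1}(g)=\{y\}$. I would argue coordinatewise: for $z\in f_{eq}^{-1}(g)$ and $i\in\mathbb{G}$, applying $\sigma^{i}$ gives $f_{eq}(\sigma^{i}z)=T_{eq}^{i}g=f_{eq}(\sigma^{i}y)$; since $T_{eq}^{i}g\notin\mathcal{D}$, the points $\sigma^{i}z$ and $\sigma^{i}y$ lie in a common fibre over $\mathcal{D}^{c}$, so $z_{i}=(\sigma^{i}z)_{0}=(\sigma^{i}y)_{0}=y_{i}$. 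As $i$ was arbitrary, $z=y$. I emphasise that this argument only pushes points forward under $\sigma$, so it remains valid for the non-invertible action of $\mathbb{G}=\mathbb{Z}_{+}^{d}$; this is the single place where non-invertibility must be respected, and it is the main point to get right (the reverse implication, which would require backing up along the shift, is not needed for the existence statement).

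Finally I produce a $g$ whose entire orbit avoids $\mathcal{D}$. Setting $\mathcal{D}^{*}:=\bigcup_{i\in\mathbb{G}}T_{eq}^{-i}(\mathcal{D})$, each summand is closed and nowhere dense and $\mathbb{G}$ is countable, so $\mathcal{D}^{*}$ is meager in the compact metric group $G_{eq}$; by the Baire category theorem $G_{eq}\setminus\mathcal{D}^{*}$ is residual, in particular nonempty. For any $g$ in this set we have $T_{eq}^{i}g\notin\mathcal{D}$ for all $i$, and choosing $y\in f_{eq}^{-1}(g)$ (nonempty by surjectivity of $f_{eq}$) gives $o_{\sigma_{eq}}(y)\cap\mathcal{D}=\emptyset$, whence $f_{eq}^{-1}(g)=\{y\}$ by the previous paragraph. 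Since a transitive almost automorphic system is minimal, every point of $X$ is transitive, so in particular this $y$ is transitive, completing the proof. Apart from the coordinatewise uniqueness argument, which is the genuine obstacle, the remainder is just a category count together with the finiteness of $\mathcal{A}$.
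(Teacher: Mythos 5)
Your argument is correct, but it takes a genuinely different route from the paper: the paper disposes of this proposition in one line, by citing Theorem 6.4 of \cite{surveyodometers}, whereas you give a self-contained proof built only on facts the paper establishes just before the statement. Your three steps are sound. Local constancy of $h$ uses exactly the paper's remark that, $\mathcal{A}$ being finite, each $g\notin\mathcal{D}$ has a neighbourhood $U_{g}$ forcing the $0$th coordinate of every preimage; the coordinatewise identity $z_{i}=(\sigma^{i}z)_{0}=(\sigma^{i}y)_{0}=y_{i}$ is valid because $f_{eq}\circ\sigma^{i}=T_{eq}^{i}\circ f_{eq}$, and, as you rightly stress, it only pushes points forward, so the one-sided $\mathbb{Z}_{+}^{d}$-action causes no trouble; and the Baire count works because $\mathcal{D}$ is closed and nowhere dense for almost automorphic subshifts (established in the paper), each $T_{eq}^{i}$ is a translation of the compact metric group $G_{eq}$, hence a homeomorphism, and $\mathbb{G}$ is countable, so $G_{eq}\setminus\bigcup_{i\in\mathbb{G}}T_{eq}^{-i}(\mathcal{D})$ is residual and in particular nonempty. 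Your appeal to the paper's earlier assertion that transitive almost automorphic systems are minimal (to conclude $y$ is transitive) is legitimate, since that assertion precedes the proposition. What your route buys: it is elementary, it is explicitly adapted to the non-invertible setting, and it yields both formulations in the statement at once, correctly isolating that only the implication ``orbit of $f_{eq}(y)$ avoids $\mathcal{D}$ implies singleton fibre'' is needed (the converse is the direction that would require backing up along the shift). What the paper's citation buys: brevity, and placement of the result inside the standard theory of almost automorphic symbolic systems and semicocycles, where the statement is known.
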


\begin{proof}
This follows from Theorem 6.4 in \cite{surveyodometers}.
\end{proof}

\begin{lemma}
Let $(X,\sigma)$ be a transitive almost automorphic subshift and
$w=a_{0}...a_{n-1}\in\mathcal{A}^{n}$ and $U=\left\{  x:x_{0}...x_{n-1}%
=w\right\}  \subset X$ a non-empty set . There exists $p\in U $ such that
$p^{\prime}=f_{eq}(p)$ is generic for $\mathcal{D}$ (with respect to $\nu
_{eq}$) and $\sigma_{eq}^{i}p^{\prime}\in\mathcal{D}^{c}$ for $i=0,...,n-1.$
\end{lemma}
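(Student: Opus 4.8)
The plan is to realize the desired point $p$ as an $f_{eq}$-preimage of a carefully chosen $p'\in G_{eq}$, selected so that $p'$ is Birkhoff generic for the indicator of $\mathcal{D}$ while its first $n$ iterates avoid $\mathcal{D}$ and spell out the word $w$. Recall from Proposition \ref{y} that $h(g)=(f_{eq}^{-1}(g))_{0}$ is a well-defined continuous map on $\mathcal{D}^{c}$, and that whenever $\sigma_{eq}^{i}g\notin\mathcal{D}$ every preimage of $g$ has $i$-th coordinate equal to $h(\sigma_{eq}^{i}g)$. Hence the requirement that a preimage of $g$ land in $U$ can be read entirely off the factor, and I would introduce
\[
V:=\left\{g\in G_{eq}:\sigma_{eq}^{i}g\in\mathcal{D}^{c}\text{ and }h(\sigma_{eq}^{i}g)=a_{i}\text{ for }i=0,\dots,n-1\right\}.
\]
Since $\mathcal{D}$ is closed, $\mathcal{A}$ is finite (so each singleton $\{a_{i}\}$ is clopen), and $h$ is continuous on $\mathcal{D}^{c}$, each of the finitely many defining conditions is open; thus $V$ is open, and by construction every $g\in V$ has all of its $f_{eq}$-preimages inside $U$.

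The first main step would be to show $V\neq\emptyset$. For this I would invoke the transitive point $y$ furnished by Proposition \ref{y}, whose whole $\sigma_{eq}$-orbit $\{\sigma_{eq}^{i}f_{eq}(y)\}$ misses $\mathcal{D}$. Because $U$ is a non-empty open cylinder and $y$ is transitive, some iterate $\sigma^{k}y$ lies in $U$; putting $g_{0}=f_{eq}(\sigma^{k}y)$, one checks that $\sigma_{eq}^{i}g_{0}\notin\mathcal{D}$ for all $i$ and that $h(\sigma_{eq}^{i}g_{0})=(\sigma^{k}y)_{i}=a_{i}$ for $i=0,\dots,n-1$, so $g_{0}\in V$. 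As $\nu_{eq}$ has full support, the non-empty open set $V$ has positive measure, i.e. $\nu_{eq}(V)>0$.

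The second main step uses ergodicity of $(G_{eq},T_{eq},\nu_{eq})$. Applying the pointwise ergodic theorem to $1_{\mathcal{D}}\in L^{1}(\nu_{eq})$ shows that the set $\mathcal{G}$ of points generic for $\mathcal{D}$, namely those $g$ with $\overline{D}\{i:\sigma_{eq}^{i}g\in\mathcal{D}\}=\underline{D}\{i:\sigma_{eq}^{i}g\in\mathcal{D}\}=\nu_{eq}(\mathcal{D})$, has full measure. Therefore $\nu_{eq}(V\cap\mathcal{G})=\nu_{eq}(V)>0$, so I may pick $p'\in V\cap\mathcal{G}$ and then any $p\in f_{eq}^{-1}(p')$. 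Membership $p'\in V$ gives $p\in U$ and $\sigma_{eq}^{i}p'\in\mathcal{D}^{c}$ for $i=0,\dots,n-1$, while $p'\in\mathcal{G}$ gives that $f_{eq}(p)=p'$ is generic for $\mathcal{D}$, as required.

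The main obstacle — and the reason the statement is stronger than Proposition \ref{y} itself — is that the transitive point of Proposition \ref{y} is typically \emph{not} generic for $\mathcal{D}$: in the non-regular case $\nu_{eq}(\mathcal{D})>0$ its orbit avoids $\mathcal{D}$ entirely and so has visit-density $0\neq\nu_{eq}(\mathcal{D})$. The device that resolves this is to use that point only to certify that the open set $V$ is non-empty, hence of positive measure, and then to extract a genuinely generic element from $V$ using the full-measure supply of generic points. The step I would check most carefully is the openness bookkeeping for $V$ (continuity of $h$ on $\mathcal{D}^{c}$ together with discreteness of $\mathcal{A}$), and confirming that "generic for $\mathcal{D}$" is indeed meant in the Birkhoff sense for $1_{\mathcal{D}}$, the only reading under which such points form a full-measure set.
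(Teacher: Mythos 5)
Your proposal is correct and follows essentially the same route as the paper: your set $V$ is exactly the paper's $\cap_{i=0}^{n-1}\sigma_{eq}^{-i}U_{a_{i}}$ with $U_{a}=\left\{ g\notin\mathcal{D}:(f_{eq}^{-1}(g))_{0}=a\right\}$, openness comes from continuity of $h$ on $\mathcal{D}^{c}$, non-emptiness from the transitive point of Proposition \ref{y} whose $f_{eq}$-image has orbit disjoint from $\mathcal{D}$, and the generic point is then extracted using full support of $\nu_{eq}$ together with the full-measure set of Birkhoff-generic points. The only difference is that you spell out explicitly the pointwise ergodic theorem step and the observation that all $f_{eq}$-preimages of points of $V$ lie in $U$, both of which the paper leaves implicit.
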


\begin{proof}
Let $U_{a}=\left\{  g\in X_{eq}:g\notin\mathcal{D},(f_{eq}^{-1}(g))_{0}%
=a\right\}  $ and $h:\mathcal{D}^{c}\rightarrow\mathcal{A}$ the continuous
function from Proposition \ref{y}. This implies that for every $a,$ $U_{a}$ is
an open set. Hence $\cap_{i=0}^{n-1}\sigma_{eq}^{-i}U_{a_{i}}$ is an open set.

Let $y\in X$ be the point given by Proposition \ref{y}. Since $U$ is non-empty
and $y$ transitive, there exists $z\in o_{\sigma_{eq}}(y)\cap U.$ Considering
that $o_{\sigma_{eq}}(y)\cap\mathcal{D=\emptyset}$ we obtain $f_{eq}(z)\in$
$\cap_{i=0}^{n-1}\sigma_{eq}^{-i}U_{a_{i}}.$ Thus $\cap_{i=0}^{n-1}\sigma
_{eq}^{-i}U_{a_{i}}$ is a non-empty open set. Since $\nu_{eq\text{ }}$ is
fully supported it contains a generic point for $\mathcal{D}$.
\end{proof}

\begin{proposition}
\label{strongbar}Let $(X,\sigma)$ be a transitive almost automorphic subshift.
If $(X,\sigma)$ is not regular then it is \underline{diam-mean} sensitive.
\end{proposition}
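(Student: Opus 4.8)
The plan is to prove the contrapositive-flavored statement directly: assuming $(X,\sigma)$ is a transitive almost automorphic subshift that is not regular, I will exhibit the sensitivity constant $\varepsilon$ and, for each open set $U$, produce a point whose orbit forces the diameter of $\sigma^i U$ to exceed $\varepsilon$ on a set of lower density exceeding $\varepsilon$. Since $(X,\sigma)$ is not regular, the set $\mathcal{D}$ satisfies $\nu_{eq}(\mathcal{D})>0$ by the remark following the definition of $\mathcal{D}$; call this value $c>0$. Because $\mathcal{A}$ is finite and $\mathcal{D}=\cup_n\{g:\exists x,y,\ f_{eq}(x)=f_{eq}(y)=g,\ d_{\mathcal{A}}(x_0,y_0)>1/n\}$, by choosing $n$ large enough we may fix a single $\varepsilon>0$ (say $\varepsilon < \min\{1/n, c\}$) and a positive-measure subset $\mathcal{D}_\varepsilon\subseteq\mathcal{D}$ of fibers on which two points of $X$ disagree in the $0$th coordinate by more than $\varepsilon$.

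First I would reduce to cylinder sets: it suffices to verify the diam-mean sensitivity condition for a basis of open sets, so I take $U=\{x:x_0\cdots x_{n-1}=w\}$ to be a nonempty cylinder. The key input is the preceding Lemma, which provides a point $p\in U$ whose image $p'=f_{eq}(p)$ is generic for $\mathcal{D}$ with respect to $\nu_{eq}$ and avoids $\mathcal{D}$ on the initial window $i=0,\dots,n-1$. By genericity and the pointwise ergodic theorem applied along the orbit of $p'$, the lower density of $\{i:\sigma_{eq}^i p'\in\mathcal{D}_\varepsilon\}$ equals $\nu_{eq}(\mathcal{D}_\varepsilon)>\varepsilon$. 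For each such index $i$, the fiber over $\sigma_{eq}^i p'$ contains two points disagreeing by more than $\varepsilon$ in coordinate $0$; I pull this disagreement back into $U$ using that $p\in U$ and that both preimage points lie in a fiber over $o_{\sigma_{eq}}(p')$, so that shifting exhibits two elements of $U$ (or of $X$ agreeing with $p$ on $w$) whose $i$th coordinates differ by more than $\varepsilon$. This shows $diam(\sigma^i U)>\varepsilon$ for all such $i$, giving $\underline{D}\{i:diam(\sigma^i U)>\varepsilon\}\geq\nu_{eq}(\mathcal{D}_\varepsilon)>\varepsilon$.

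The main obstacle I anticipate is the bookkeeping that connects a disagreement in a fiber over $\sigma_{eq}^i p'$ to an actual diameter bound for $\sigma^i U$: the two points realizing the fiber disagreement need not both lie in $U$, so I must use the continuity function $h$ from Proposition \ref{y} together with almost automorphy to locate genuine elements of $X$ that agree with $w$ on the window $[0,n-1]$ yet split at coordinate $i$. Concretely, since $\sigma_{eq}^i p'\in\mathcal{D}_\varepsilon$ means the fiber $f_{eq}^{-1}(\sigma_{eq}^i p')$ is not a singleton in coordinate $0$, there are $x,y$ with $f_{eq}(x)=f_{eq}(y)=\sigma_{eq}^i p'$ and $d_{\mathcal{A}}(x_0,y_0)>\varepsilon$; shifting back by $i$ and invoking minimality (transitive almost automorphic systems are minimal) to control the window ensures both shifted points remain in $U$, or at worst that $\sigma^i U$ contains the two disagreeing coordinates. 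Making this pull-back uniform over $i$, rather than point-by-point, is the delicate step, and I would handle it by working with the single transitive generic point $p$ and its orbit closure throughout, so that all the needed preimages come from a fixed fiber structure rather than varying with $i$.
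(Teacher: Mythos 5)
Your proposal follows essentially the same route as the paper's proof: reduce to nonempty cylinder sets $U$, invoke the preceding lemma to obtain $p\in U$ whose image $p'=f_{eq}(p)$ is generic for $\mathcal{D}$ and avoids $\mathcal{D}$ on the window $0,\dots,n-1$, use genericity to get lower density $\nu_{eq}(\mathcal{D})>0$ for the set of times $i$ with $\sigma_{eq}^{i}p'\in\mathcal{D}$, and for each such $i$ lift the fiber disagreement back to a point $q\in U$ with $q_{i}\neq p_{i}$ (the window property forcing $q$ to agree with $w$). The differences are cosmetic: since $\mathcal{A}$ is finite, your $\mathcal{D}_{\varepsilon}$ coincides with $\mathcal{D}$ once $\varepsilon$ is below the minimal distance between distinct letters (which is what makes genericity for $\mathcal{D}$ suffice), and the pull-back need not be made ``uniform over $i$'' --- the paper carries it out point-by-point, exactly as your fiber argument does.
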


\begin{proof}
Assume $(X,\sigma)$ is not regular. This means that $\nu_{eq}(\mathcal{D})>0.$

Let $w\in\mathcal{A}^{n}$ and $U=\left\{  x:x_{0}...x_{n-1}=w\right\}  \subset
X$ non-empty (these sets form a base of the topology)$.$ Let $p\in U$ be the
point given by the previous lemma. Let $S:=\left\{  i\in\mathbb{G}:\sigma
_{eq}^{i}p^{\prime}\in\mathcal{D}\right\}  .$ Since $p^{\prime}$ is generic
for $\mathcal{D}$ we have that \underline{$D$}$(S)=\nu_{eq}(\mathcal{D}).$
Furthermore for every $i\in S$ there exists $q\in X$ such that $f_{eq}%
(p)=f_{eq}(q)$ and $p_{i}\neq q_{i}.$ Since $\sigma_{eq}^{j}p^{\prime}%
\in\mathcal{D}^{c}$ for $j=0,...,n-1$ we have that $q\in U.$ Hence
$(X,\sigma)$ is \underline{diam-mean} sensitive.
\end{proof}

\begin{corollary}
\label{regularity}Let $(X,\sigma)$ be a minimal almost automorphic subshift.
The following conditions are equivalent:

$1)(X,\sigma)$ is diam-mean equicontinuous;

$2)(X,\sigma)$ is not diam-mean sensitive;

$3)(X,\sigma)$ is regular;

$4)(X,\sigma)$ is \underline{diam-mean} equicontinuous;

$5)(X,\sigma)$ is not \underline{diam-mean} sensitive.
\end{corollary}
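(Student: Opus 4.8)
The plan is to assemble the five equivalences from the dichotomies and structural theorems already proved, closing a single cycle of implications through four of the conditions and attaching the fifth separately. Concretely, I would establish $3)\Rightarrow 1)\Rightarrow 4)\Rightarrow 5)\Rightarrow 3)$, which forces $1),3),4),5)$ to be equivalent, and then append $1)\Leftrightarrow 2)$. Throughout I use that a minimal almost automorphic subshift is in particular minimal, hence transitive, so that each cited result applies without modification.

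The equivalence $1)\Leftrightarrow 3)$ is exactly Theorem \ref{aa}, so I would simply invoke it; this supplies both $3)\Rightarrow 1)$ and the return arrow $1)\Rightarrow 3)$. For $1)\Leftrightarrow 2)$ I would use Theorem \ref{dichotomy}, which asserts that a minimal system is either diam-mean equicontinuous or diam-mean sensitive. These two properties are mutually exclusive: a $\delta$ realizing the equicontinuity estimate $\overline{D}\{i:diam(T^{i}B_{\delta}(x))>\varepsilon\}<\varepsilon$ contradicts the sensitivity estimate $\overline{D}\{i:diam(T^{i}U)>\varepsilon\}>\varepsilon$ applied with $U=B_{\delta}(x)$. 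Hence exactly one of the two holds, giving $1)\Leftrightarrow 2)$. The identical mutual-exclusivity argument, now with the lower density $\underline{D}$ in place of $\overline{D}$, combines with Proposition \ref{lowerd} to yield $4)\Leftrightarrow 5)$.

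It then remains to splice the lower-density conditions into the cycle. The implication $1)\Rightarrow 4)$ is immediate: passing to the equivalent uniform formulation of diam-mean equicontinuity, for each $\varepsilon>0$ I obtain a single $\delta>0$ with $\overline{D}\{i:diam(T^{i}B_{\delta}(x))>\varepsilon\}<\varepsilon$ for every $x$, and since $\underline{D}\le\overline{D}$ this is precisely \underline{diam-mean} equicontinuity. The implication $5)\Rightarrow 3)$ is the contrapositive of Proposition \ref{strongbar}: a non-regular transitive almost automorphic subshift is \underline{diam-mean} sensitive, so a system that is not \underline{diam-mean} sensitive must be regular. Chaining $3)\Rightarrow 1)\Rightarrow 4)\Rightarrow 5)\Rightarrow 3)$ closes the loop.

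I expect no serious obstacle internal to this corollary, since the substantive content lives entirely in Theorems \ref{aa}, \ref{dichotomy} and Propositions \ref{lowerd}, \ref{strongbar}; the proof is a careful bookkeeping of arrows. The two points needing genuine care are verifying the mutual exclusivity that upgrades the two ``either/or'' dichotomies to honest equivalences, and recording the elementary inequality $\underline{D}\le\overline{D}$ that drives $1)\Rightarrow 4)$. The most delicate link is $5)\Rightarrow 3)$, where I must confirm that the transitivity hypothesis of Proposition \ref{strongbar} is satisfied (via minimality) before taking the contrapositive.
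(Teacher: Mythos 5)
Your proposal is correct and takes essentially the same route as the paper, which likewise gets $1)\Leftrightarrow 2)$ from Theorem \ref{dichotomy}, $1)\Leftrightarrow 3)$ from Theorem \ref{aa}, $1)\Rightarrow 4)$ by definition, $4)\Leftrightarrow 5)$ from Proposition \ref{lowerd}, and $5)\Rightarrow 3)$ from Proposition \ref{strongbar}. The only difference is that you make explicit the mutual-exclusivity arguments and the inequality $\underline{D}\le\overline{D}$ that the paper leaves implicit.
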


\begin{proof}
Apply Theorem \ref{dichotomy} to get $1)\Leftrightarrow2).$

Apply Theorem \ref{aa} to get $2)\Leftrightarrow3).$

By definition $1)\Rightarrow4).$

Proposition \ref{strongbar} implies $5)\Rightarrow3).$

Proposition \ref{lowerd} implies $4)\Leftrightarrow5).$
\end{proof}

We do not know if this result holds in general for TDS.

\subsection{Topological sequence entropy}

Let $\mathcal{U\ }$and $\mathcal{V}$ be two open covers of $X.$ We define
$\mathcal{U\ }\vee\mathcal{V}:=\left\{  U\cap V:U\in\mathcal{U\ }%
,V\in\mathcal{V}\right\}  $ and $N(\mathcal{U})$ as the minimum cardinality of
a subcover of $\mathcal{U}.$

\begin{definition}
[\cite{goodman1974topological}]\label{topologicalsequence}Let $(X,T)$ be a
TDS, $S=\left\{  s_{m}\right\}  _{m=1}^{\infty}\subset\mathbb{G}$, and
$\mathcal{U}$ an open cover. We define%

\[
h_{top}^{S}(T,\mathcal{U)}:=\lim_{n\rightarrow\infty}\sup\frac{1}{n}\log
N(\vee_{m=1}^{n}T^{-s_{m}}(\mathcal{U))}.
\]
\newline

The \textbf{topological entropy along the sequence S} is defined by%
\[
h_{top}^{S}(T\mathcal{)}:=\sup_{\text{open covers }\mathcal{U}}h_{top}%
^{S}(T,\mathcal{U)}
\]

A TDS is \textbf{null} if the topological entropy along every sequence is zero.
\end{definition}

\begin{lemma}
\label{abc}Let $K$ be a finite set, $\varepsilon>0,$ and $h:K\rightarrow
2^{\mathbb{G}}$ such that $\underline{D}(h(k))>\varepsilon$ for every $k\in
F.$ There exist $K^{\prime}\subset K$ and $i\in\mathbb{G}$ with $\left\vert
K^{\prime}\right\vert \geq\varepsilon\left\vert K\right\vert /2$ such that
$i\in h(k)$ for every $k\in K^{\prime}.$
\end{lemma}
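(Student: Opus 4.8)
The plan is to prove this by a double-counting (averaging) argument carried out on a single large averaging window $F_n$. The hypothesis is phrased in terms of lower densities, so the first step is to convert it into a uniform count over a fixed finite cube. Since $\underline{D}(h(k))>\varepsilon$ is a $\liminf$ strictly exceeding $\varepsilon$, for each $k\in K$ there is an index $N_k$ such that $|h(k)\cap F_n|>\varepsilon|F_n|$ for all $n\geq N_k$. Because $K$ is finite I can set $N:=\max_{k\in K}N_k$ and obtain a single scale: for every $n\geq N$ this inequality holds simultaneously for all $k\in K$.

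Fix one such $n$. The heart of the argument is to count, in two ways, the incidences between elements of $F_n$ and the sets $h(k)$. On one hand, summing the uniform lower bound from the previous step,
\[
\sum_{k\in K}|h(k)\cap F_n| > \varepsilon|K|\,|F_n|.
\]
On the other hand, interchanging the order of summation,
\[
\sum_{k\in K}|h(k)\cap F_n| = \sum_{i\in F_n}\bigl|\{k\in K:i\in h(k)\}\bigr|.
\]
Comparing the two expressions shows that the average of $\bigl|\{k\in K:i\in h(k)\}\bigr|$ over the $|F_n|$ indices $i\in F_n$ exceeds $\varepsilon|K|$.

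By the pigeonhole principle there is then some $i\in F_n$ with $\bigl|\{k\in K:i\in h(k)\}\bigr|>\varepsilon|K|\geq\varepsilon|K|/2$. Setting $K':=\{k\in K:i\in h(k)\}$ yields the required pair $(K',i)$, since $i\in h(k)$ holds for every $k\in K'$ by the very definition of $K'$.

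I expect no real obstacle here. The only point needing mild care is the passage from the $\liminf$ hypothesis to the uniform finite-window bound, and even there the generous target $\varepsilon|K|/2$ (rather than the $\varepsilon|K|$ the argument actually delivers) leaves ample slack, so one need not track strictness of inequalities or settle for a slightly smaller density threshold. I note in passing that the ``$k\in F$'' appearing in the hypothesis should read ``$k\in K$''.
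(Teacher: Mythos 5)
Your proof is correct and is essentially the paper's own argument: pass from the $\liminf$ hypothesis to a single finite window $F_{n}$ on which the count $\left\vert h(k)\cap F_{n}\right\vert$ is bounded below simultaneously for all $k\in K$ (using finiteness of $K$), double-count incidences between $F_{n}$ and the sets $h(k)$, and apply the pigeonhole principle. The only cosmetic difference is that the paper takes the uniform threshold $\varepsilon/2$ on the window whereas you keep the sharper threshold $\varepsilon$ and absorb the factor of $2$ only at the end; you are also right that ``$k\in F$'' in the statement is a typo for ``$k\in K$''.
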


\begin{proof}
There exists $n_{0}\in\mathbb{N}$ such that%
\[
\frac{\left\vert h(k)\cap F_{n_{0}}\right\vert }{\left\vert F_{n_{0}%
}\right\vert }\geq\varepsilon/2\text{ }\ \ \text{for every }k\in K.
\]

This means that
\[
\sum_{j\in F_{n_{0}}}\left\vert k\in K:j\subset h(k)\right\vert =\sum
_{k\in\mathcal{K}}\left\vert j\in F_{n_{0}}:j\in h(k)\right\vert \geq
\frac{\varepsilon}{2}\left\vert K\right\vert \left\vert F_{n_{0}}\right\vert
.
\]
Hence there exists $j\in F_{n_{0}}$ such that $\left\vert k\in K:j\subset
h(k)\right\vert \geq\frac{\varepsilon}{2}\left\vert K\right\vert .$
\end{proof}

\begin{theorem}
\label{seqentr}Let $(X,T)$ be a TDS. If $(X,T)$ is \ \underline{diam-mean}
sensitive then there exists $S\subset\mathbb{G}$ such that $h_{top}%
^{S}(T\mathcal{)>}0.$
\end{theorem}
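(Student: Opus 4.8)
The plan is to exhibit a single finite open cover $\mathcal{U}$ and an infinite sequence $S=\{s_1<s_2<\cdots\}\subset\mathbb{G}$ along which the join complexity grows exponentially. Fix the sensitivity constant $\varepsilon>0$ from \underline{diam-mean} sensitivity and let $\mathcal{U}=\{U_1,\dots,U_r\}$ be a finite open cover of $X$ each of whose elements has diameter $<\varepsilon/2$. The key reduction is the following: if $E\subset X$ is a set such that for every pair $p\neq q$ in $E$ there is some $m\le n$ with $d(T^{s_m}p,T^{s_m}q)>\varepsilon/2$, then $N(\vee_{m=1}^{n}T^{-s_m}\mathcal{U})\ge|E|$. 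Indeed, an atom $\bigcap_{m=1}^{n}T^{-s_m}U_{\ell_m}$ of the join cannot contain two such points $p,q$: at the separating time $s_m$ both $T^{s_m}p$ and $T^{s_m}q$ would lie in the single set $U_{\ell_m}$ of diameter $<\varepsilon/2$, contradicting $d(T^{s_m}p,T^{s_m}q)>\varepsilon/2$. Hence every atom meets $E$ in at most one point, so any subcover needs at least $|E|$ atoms. It therefore suffices to produce, for each $n$, such a separated set $E_n$ with $|E_n|\ge(1+\varepsilon/2)^{n}$, all referring to the same $S$.

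These sets will be the leaves of a tree of open ``cells'' built inductively. I would maintain a finite family $\mathcal{C}_k$ of open sets together with times $s_1,\dots,s_k$ satisfying the invariant: any two distinct cells $C,C'\in\mathcal{C}_k$ have set-wise separated images, i.e. $d(T^{s_m}a,T^{s_m}b)>\varepsilon/2$ for all $a\in C$, $b\in C'$ and some $m\le k$. Start with $\mathcal{C}_0=\{X\}$. To pass from level $k$ to $k+1$, apply \underline{diam-mean} sensitivity to each cell: with $F_M\supset\{s_1,\dots,s_k\}$, the set $h(C):=\{i\in\mathbb{G}\setminus F_M:diam(T^{i}C)>\varepsilon\}$ still has lower density $>\varepsilon$ (deleting a finite set does not change the lower density, which keeps the $s_m$ strictly increasing). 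Now Lemma \ref{abc}, applied with $K=\mathcal{C}_k$ and this $h$, yields one common time $s_{k+1}\notin F_M$ and a subfamily $K'\subseteq\mathcal{C}_k$ with $|K'|\ge(\varepsilon/2)|\mathcal{C}_k|$ such that $diam(T^{s_{k+1}}C)>\varepsilon$ for every $C\in K'$. For each $C\in K'$ pick $a,b\in C$ with $d(T^{s_{k+1}}a,T^{s_{k+1}}b)>\varepsilon$ and, by continuity of $T^{s_{k+1}}$, open subsets $C^{(0)}\ni a$, $C^{(1)}\ni b$ of $C$ whose images under $T^{s_{k+1}}$ have diameter $<\varepsilon/8$; then $T^{s_{k+1}}C^{(0)}$ and $T^{s_{k+1}}C^{(1)}$ are set-wise $>\varepsilon/2$ apart. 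Let $\mathcal{C}_{k+1}$ consist of the two children $C^{(0)},C^{(1)}$ of each $C\in K'$ together with the cells of $\mathcal{C}_k\setminus K'$ unchanged. Since children are subsets of their parents, the separation invariant is inherited for all old pairs and holds for the two new children of a common parent via $s_{k+1}$, so it persists.

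This yields $|\mathcal{C}_{k+1}|=2|K'|+(|\mathcal{C}_k|-|K'|)=|\mathcal{C}_k|+|K'|\ge(1+\varepsilon/2)|\mathcal{C}_k|$, hence $|\mathcal{C}_n|\ge(1+\varepsilon/2)^{n}$. Choosing one point from each cell of $\mathcal{C}_n$ gives a set $E_n$ which, by the invariant, is pairwise separated along $s_1,\dots,s_n$ in the sense required above, so $N(\vee_{m=1}^{n}T^{-s_m}\mathcal{U})\ge|E_n|\ge(1+\varepsilon/2)^{n}$. Taking $S=\{s_m\}$ we conclude $h_{top}^{S}(T,\mathcal{U})=\limsup_n\frac1n\log N(\vee_{m=1}^{n}T^{-s_m}\mathcal{U})\ge\log(1+\varepsilon/2)>0$, so $h_{top}^{S}(T)>0$.

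The step I expect to be the crux is the passage from level $k$ to $k+1$: a naive full binary branching would require a fresh separating time for every branch, producing exponentially many times and hence no entropy. The role of Lemma \ref{abc} is precisely to force a single time $s_{k+1}$ to separate a fixed positive fraction ($\ge\varepsilon/2$) of all current cells at once, which is what converts per-set positive-density sensitivity into a uniform exponential growth rate $(1+\varepsilon/2)$ against a bounded number of times. The other delicate point, uniformity of the separation threshold as the cells shrink, is supplied automatically by the single constant $\varepsilon$ in the definition of \underline{diam-mean} sensitivity, which guarantees separation by a fixed amount no matter how small the cell.
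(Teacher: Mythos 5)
Your proof is correct, and its engine is the same as the paper's: both arguments fix a finite cover $\mathcal{U}$ by sets of diameter less than $\varepsilon/2$, build $S$ inductively, and at each step invoke Lemma \ref{abc} to extract a single new time $s_{k+1}$ that $\varepsilon$-splits at least an $(\varepsilon/2)$-fraction of the current collection, giving multiplicative growth $1+\varepsilon/2$ and hence sequence entropy at least $\log(1+\varepsilon/2)$. Where you genuinely diverge is the bookkeeping object. The paper tracks the minimal subcover $L_n^{\prime}$ of the join $\vee_{i\leq n}T^{-s_i}\mathcal{U}$ directly: it applies sensitivity to the disjointified pieces $A_k\setminus\cup_{j<k}\overline{A}_j$ of the subcover elements and then asserts that the common splitting time forces $N(L_{n+1})/N(L_n)>1+\varepsilon/2$. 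You instead grow a tree of open cells that are pairwise set-wise $(\varepsilon/2)$-separated at some time $s_m$, and only at the end convert cell counts into covering numbers via the standard fact that a set meeting each element of the join in at most one point lower-bounds $N$. Your route buys rigor at two spots the paper glosses over: the paper needs each $A_k\setminus\cup_{j<k}\overline{A}_j$ to be a non-empty open set in order to apply \underline{diam-mean} sensitivity to it (minimality of the subcover only guarantees $A_k\not\subset\cup_{j\neq k}A_j$, not disjointness from the closures of the earlier sets), and it leaves implicit why splitting pairs of points inside those pieces increases the minimal subcover cardinality by the stated factor; your set-wise separation invariant, automatically inherited by nested cells, makes both steps transparent. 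The only extra cost is the small technical step of shrinking each pair of children to sets whose images have diameter $<\varepsilon/8$, so that separation of two points upgrades to separation of two sets.
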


\begin{proof}
Let $(X,T)$ be \underline{diam-mean} sensitive with sensitive constant
$\varepsilon.$ Let $\mathcal{U}:=\left\{  U_{1},...U_{N}\right\}  $ be a
finite open cover with balls with diameter smaller than $\varepsilon/2.$

We will define the sequence $S^{n}=\left\{  s_{1},...,s_{n}\right\}  $
inductively with $s_{1}=1$. For every $n\in\mathbb{N}$ we define
$L_{n}:=\left\{  \cap_{i=1}^{n}T^{-s_{i}}U_{v_{i}}\neq\emptyset:v\in\left\{
1,...,N\right\}  ^{S^{n}}\right\}  $ ($N$ is the size of the cover)$.$ We
denote by $L_{n}^{\prime}:=\left\{  A_{k}^{n}\right\}  _{k\leq N(L_{n})}$ a
subcover of $L_{n}$ of minimal cardinality$.$ We define the function
$f:L_{n}^{\prime}\rightarrow2^{\mathbb{G}}$ as follows; $m\in f(A_{k})$ if and
only if there exists $x,y\in A_{k}\diagdown\cup_{j<k}\overline{A}_{j}$ such
that $d(T^{m}x,T^{m}y)>\varepsilon.$

Assume $S^{n}$ is defined. Since $(X,T)$ is \underline{diam-mean} sensitive we
have that $\underline{D}(f(U))>\varepsilon$ for every $U\in L_{n}^{\prime}.$
By Lemma \ref{abc} there exists $g\in\mathbb{G}$ such that $\frac{\left\vert
\left\{  U\in L_{n}^{\prime}:g\in f(U)\right\}  \right\vert }{N(L_{n}%
)}>\varepsilon/2;$ we define $s_{n+1}:=g.$ The definition of $f$ implies that
$\frac{N(L_{n+1})}{N(L_{n})}>1+\varepsilon/2.$ Let $S^{\infty}:=\cup
_{n\in\mathbb{N}}S^{n}.$ Since $N(L_{n})=N(\vee_{i\in S^{n}}T^{-s_{i}%
}(\mathcal{U))},$ we conclude that $h_{top}^{S^{\infty}}(T,\mathcal{U)}>0.$
\end{proof}

\begin{corollary}
\label{nullweak}Let $(X,T)$ be a minimal TDS. If $(X,T)$ is null then it is
\underline{diam-mean} equicontinuous.
\end{corollary}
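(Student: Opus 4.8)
The plan is to obtain this immediately from the dichotomy for \underline{diam-mean} equicontinuity (Proposition \ref{lowerd}) together with the sequence-entropy lower bound for \underline{diam-mean} sensitive systems (Theorem \ref{seqentr}). First I would invoke Proposition \ref{lowerd}: since $(X,T)$ is minimal, it is either \underline{diam-mean} equicontinuous or \underline{diam-mean} sensitive, and these two alternatives are mutually exclusive. It therefore suffices to rule out the second alternative under the nullness hypothesis.

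To do this I would argue by contradiction. Suppose $(X,T)$ were \underline{diam-mean} sensitive. Then Theorem \ref{seqentr} produces a sequence $S\subset\mathbb{G}$ with $h_{top}^{S}(T)>0$. But by Definition \ref{topologicalsequence} a null system satisfies $h_{top}^{S}(T)=0$ for \emph{every} sequence $S\subset\mathbb{G}$, so the existence of such an $S$ contradicts the assumption that $(X,T)$ is null. Hence $(X,T)$ cannot be \underline{diam-mean} sensitive, and by the dichotomy it must be \underline{diam-mean} equicontinuous.

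There is no substantial obstacle at this stage: all of the genuine work has already been carried out, in Proposition \ref{lowerd} (the Auslander--Yorke-style dichotomy for the lower-density variant) and in Theorem \ref{seqentr} (the inductive construction of a cover $\mathcal{U}$ and a sequence $S^{\infty}$ witnessing positive topological sequence entropy). The only point to keep straight is that nullness is the assertion that $h_{top}^{S}(T)$ vanishes for all $S$, so producing a single bad sequence $S$ already violates it; minimality is used only insofar as it is the hypothesis required by Proposition \ref{lowerd}.
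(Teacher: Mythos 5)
Your proof is correct and is exactly the paper's argument: the paper's own proof of Corollary \ref{nullweak} consists precisely of applying Theorem \ref{seqentr} and Proposition \ref{lowerd}, and your contradiction argument (nullness rules out the \underline{diam-mean} sensitive horn of the dichotomy, since Theorem \ref{seqentr} would yield a sequence $S$ with $h_{top}^{S}(T)>0$) is the intended way to combine them. Nothing further is needed.
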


\begin{proof}
Apply Theorem \ref{seqentr} and Proposition \ref{lowerd}.
\end{proof}

Every minimal null TDS is almost automorphic (see \cite{huang2003null} and
\cite{huang2006tame}). Using the previous corollary and Corollary
\ref{regularity} we obtain a stronger result for subshifts.

\begin{corollary}
\label{nullquasi}Let $(X,\sigma)$ be a minimal subshift. If $(X,\sigma)$ is
null then it is a regular almost automorphic subshift and hence diam-mean equicontinuous.
\end{corollary}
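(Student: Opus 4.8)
The plan is to chain together the results already in hand, with the subshift hypothesis doing the crucial work of upgrading the weaker (underlined) notion of mean equicontinuity to the full one. First I would record that $(X,\sigma)$, being a minimal null TDS, is almost automorphic: this is exactly the theorem of Huang--Li--Shao--Ye \cite{huang2003null} (see also \cite{huang2006tame}) quoted in the paragraph preceding the statement. Hence $(X,\sigma)$ is a minimal almost automorphic subshift, which is precisely the hypothesis under which Corollary \ref{regularity} is available.

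Next I would apply Corollary \ref{nullweak} directly: since $(X,\sigma)$ is minimal and null, it is \underline{diam-mean} equicontinuous. This is nothing other than condition $4$ in the list of equivalent conditions of Corollary \ref{regularity}.

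Finally, for the minimal almost automorphic subshift $(X,\sigma)$ I would invoke the equivalences of Corollary \ref{regularity}: condition $4$ (\underline{diam-mean} equicontinuity) immediately gives condition $3$ (regularity) and condition $1$ (diam-mean equicontinuity). Thus $(X,\sigma)$ is a regular almost automorphic subshift and is diam-mean equicontinuous, which is the claim.

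The only real subtlety—rather than a genuine obstacle—is that Corollary \ref{nullweak} on its own yields merely the underlined, lower-density form of equicontinuity, which a priori is strictly weaker than diam-mean equicontinuity. The promotion to the non-underlined version is supplied exactly by the subshift-specific Corollary \ref{regularity}, and this is why the conclusion is phrased for subshifts rather than for arbitrary minimal TDS (the text explicitly notes it is unknown whether the analogous promotion holds in general). Consequently the entire argument is a matter of applying the correct prior result at each stage; no new density estimates or entropy computations are required.
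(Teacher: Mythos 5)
Your proposal is correct and follows exactly the paper's route: the paper derives this corollary by combining the fact that minimal null TDS are almost automorphic \cite{huang2003null}, Corollary \ref{nullweak} (giving \underline{diam-mean} equicontinuity), and the equivalences of Corollary \ref{regularity} to upgrade to regularity and diam-mean equicontinuity. Your closing remark about the subshift hypothesis being what permits the promotion from the lower-density notion to the full one is also precisely the point the paper makes.
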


\bigskip The converse of this result is not true (Example \ref{quasinotnull}).

In Corollary \ref{principal} we saw that an ergodic TDS is $\mu-$null if and
only if it is $\mu-$mean equicontinuous. If $(X,T)$ is mean equicontinuous and
$\mu$ is an ergodic measure then $(X,T)$ is $\mu-$mean equicontinuous, and
hence it has zero entropy. This implies that mean equicontinuous and diam-mean
equicontinuous systems have zero topological entropy.

Surprisingly it was shown in \cite{li2013mean} that transitive almost mean
equicontinuous subshifts can have positive entropy.

\bigskip

\subsubsection{\ Tightness and mean distality}

\begin{definition}
\label{tight}Let $(X,T)$ be a TDS and $\mu$ an invariant measure. We say
$(X,T)$ is \textbf{mean distal} if $d_{b}(x,y)>0$ for every $x\neq y\in X,$
and we say $(X,T)$ is $\mu-$\textbf{tight} if there exists $X^{\prime}$ such
that $\mu(X^{\prime})=1$ and $d_{b}(x,y)>0$ for every $x\neq y\in X^{\prime}.$
\end{definition}

Mean distal systems were studied by Ornstein-Weiss in \cite{MR2075122}. They
showed that any tight measure preserving $\mathbb{Z-}$TDS has zero entropy
(assuming the system has finite entropy)\textbf{. }A $\mathbb{Z}_{+}%
\mathbb{-}$TDS has zero topological entropy if and only if it is the factor of
a mean distal TDS (see \cite{downarowicz2012forward} and \cite{MR2075122}).

Let $(X,T)$ be a mean equicontinuous TDS. Since $f_{eq}=f_{b}$ we have that if
$(X,T)$ is mean distal then $f_{eq}$ is 1-1 hence $(X,T)$ is equicontinuous.
So mean equicontinuity and mean distality are both considered rigid
properties, and a TDS satisfies both properties if and only if it is equicontinuous.

\begin{proposition}
A mean equicontinuous TDS\ is mean distal if and only if it is equicontinuous.
\end{proposition}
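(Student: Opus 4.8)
The plan is to reduce both properties to a single statement about the maximal equicontinuous factor map, and then exploit the identification $f_{eq}=f_b$ that holds for mean equicontinuous systems. First I would record the standard fact that a TDS is equicontinuous if and only if its maximal equicontinuous factor map $f_{eq}$ is injective. The reverse direction of this characterization is the only part needing care: since $f_{eq}$ is a continuous surjection between compact metric spaces, injectivity promotes it to a homeomorphism, and because it intertwines the actions it is then a topological conjugacy onto the equicontinuous factor $(X_{eq},T_{eq})$, forcing $(X,T)$ to be equicontinuous. The forward direction is immediate, since an equicontinuous system is isometric in a suitable compatible metric, so its regionally proximal relation is trivial and hence $f_{eq}$ is injective.

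Next I would invoke the hypothesis that $(X,T)$ is mean equicontinuous. By Proposition \ref{feq} this gives $f_{eq}=f_b$, where $f_b$ is the Besicovitch projection of Definition \ref{besi}. Consequently $f_{eq}$ is injective exactly when $f_b$ is injective. By the construction of the Besicovitch space, $f_b(x)=f_b(y)$ holds if and only if $d_b(x,y)=0$, so injectivity of $f_b$ is precisely the requirement that $d_b(x,y)>0$ for all $x\neq y$, which is the definition of mean distality (Definition \ref{tight}).

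Chaining these equivalences finishes the argument: $(X,T)$ is equicontinuous iff $f_{eq}$ is injective iff $f_b$ is injective iff $(X,T)$ is mean distal. I expect the only subtle step to be the equicontinuity characterization, specifically the verification that injectivity of $f_{eq}$ upgrades to a genuine conjugacy; but this is routine once the compactness of $X$ is used. Everything else is a direct translation between $f_{eq}$, $f_b$, and the pseudometric $d_b$.
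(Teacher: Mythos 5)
Your proposal is correct and takes essentially the same route as the paper: the paper's own (very brief) argument likewise uses Proposition \ref{feq} to identify $f_{eq}$ with $f_b$, observes that mean distality is exactly injectivity of $f_b$, and concludes equicontinuity from injectivity of $f_{eq}$. The only point worth adjusting is your justification of ``equicontinuous $\Rightarrow f_{eq}$ injective'': since the paper allows non-invertible actions ($\mathbb{G}=\mathbb{Z}_{+}^{d}$), equicontinuity need not give an invariant metric (e.g.\ $X=\{0\}\cup\{1/n\}$, $T(1/n)=1/(n+1)$, $T(0)=0$ is equicontinuous yet has proximal pairs), so it is safer to get injectivity directly from the universal property of the maximal equicontinuous factor (the identity map on $X$ is already an equicontinuous factor) rather than from an isometric compatible metric and triviality of the regionally proximal relation.
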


A measure theoretic version of mean distality was also defined. Let $(X,T)$ be
a transitive almost automorphic diam-mean equicontinuous TDS and $\mu$ an
ergodic measure$.$ Using Theorem \ref{aa} it is not hard to see that $(X,T)$
is $\mu-$tight.

\bigskip

It is curious that $\mu-$tightness represents rigid motion and that $\mu-$mean
expansiveness ($\mu\times\mu\left\{  (x,y):d_{b}(x,y)>\varepsilon\right\}
=1$) represents very sensitive behaviour.

\subsection{Counter-examples}

The following example shows there are mean equicontinuous not diam-mean
equicontinuous TDS. We do not have a transitive counterexample.

\begin{example}
\label{weaknotquasi}Let $X\subset\left\{  0,1\right\}  ^{\mathbb{Z}_{+}}$ be
the subshift consisting of sequences that contain at most one $1.$ For every
$x,y\in X$, $d_{b}(x,y)=0,$ so $(X,\sigma)$ is mean equicontinuous.
Nonetheless, for every $\varepsilon>0,$ \underline{$D$}$\left\{
i\in\mathbb{Z}_{+}:\exists x\in B_{\varepsilon}(0^{\infty})\text{ s.t. }%
x_{i}=1\right\}  =1$ so $0^{\infty}$ is not a diam-mean equicontinuous point
(even not a \underline{diam}-equicontinuous point).
\end{example}

In \cite{kerrtopological} the relationship between independence and entropy
was studied. We will make use of their characterization of null systems.

\begin{definition}
Let $(X,T)$ be a TDS, and $A_{1},A_{2}\subset X$. \ We say $S\subset
\mathbb{G}$ is an \textbf{independence set for }$\left(  A_{1},A_{2}\right)  $
if for every non-empty finite subset $F\subset S$ we have%
\[
\cap_{i\in F}T^{-i}A_{v(i)}\neq\emptyset
\]
for any $v\in\left\{  1,2\right\}  ^{F}.$
\end{definition}

\begin{theorem}
[\cite{kerrtopological}]\label{kerr}Let $(X,T)$ be TDS. The system $(X,T)$ is
not null if and only if there exists $x,y\in X$ (with $x\neq y$)$,$ such that
for all neighbourhoods $U_{x}$ of $x$ and $U_{y}$ of $y$ there exists an
arbitrarily large finite independence set for $\left(  U_{x},U_{y}\right)  .$
\end{theorem}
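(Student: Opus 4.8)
The plan is to prove both implications, with a Sauer--Shelah type combinatorial lemma providing the bridge between exponential growth of open covers and set-independence. Throughout I use \emph{independence set} as in the statement, and I freely pass between a finite cover and the realizable ``names'' it induces.

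First I would treat the easy direction: assume $(x,y)$ is an off-diagonal pair every neighbourhood pair of which admits arbitrarily large finite independence sets. Choose open sets $U_x\ni x$, $U_y\ni y$ with $\overline{U_x}\cap\overline{U_y}=\emptyset$ and form the two-element open cover $\mathcal{V}=\{X\setminus\overline{U_x},\,X\setminus\overline{U_y}\}$. If $F$ is a finite independence set for $(U_x,U_y)$, then for each $v\in\{x,y\}^{F}$ the independence condition supplies $z_v\in\bigcap_{i\in F}T^{-i}U_{v(i)}$; since $T^{i}z_v\in U_x$ forces $T^{i}z_v\notin X\setminus\overline{U_x}$ (and symmetrically for $y$), the point $z_v$ lies in exactly one atom of $\bigvee_{i\in F}T^{-i}\mathcal{V}$, and distinct $v$ give distinct atoms. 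Hence any subcover must retain all $2^{|F|}$ of these atoms, so $N(\bigvee_{i\in F}T^{-i}\mathcal{V})\geq 2^{|F|}$. To turn arbitrarily large finite independence sets into a \emph{single} sequence of positive entropy, I would concatenate independence sets $F_1,F_2,\dots$ of sizes $n_1,n_2,\dots$ chosen to grow so fast that $n_j/(n_1+\cdots+n_j)\to 1$; writing $S$ for the concatenation and $N_j=n_1+\cdots+n_j$, the join over the first $N_j$ terms refines the join over the block $F_j$, so $\tfrac1{N_j}\log N(\bigvee_{m=1}^{N_j}T^{-s_m}\mathcal{V})\geq \tfrac{n_j}{N_j}\log 2\to\log 2$, giving $h_{top}^{S}(T,\mathcal{V})>0$ and hence non-nullness.

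For the converse I would start from a cover $\mathcal{U}=\{U_1,\dots,U_k\}$ and a sequence $S=\{s_m\}$ with $h_{top}^{S}(T,\mathcal{U})=c>0$, so that along a subsequence of $n$ we have $N(\bigvee_{m=1}^{n}T^{-s_m}\mathcal{U})\geq e^{cn/2}$, whence there are at least that many nonempty atoms; equivalently, the set of realizable $\mathcal{U}$-names in $\{1,\dots,k\}^{n}$ has exponential cardinality. The key step is a Sauer--Shelah/Karpovsky--Milman lemma: there is $\lambda=\lambda(c,k)>0$ such that an exponentially large family of names, restricted to a suitable coordinate set $I$ with $|I|\geq\lambda n$, realizes all patterns in $\{p,q\}^{I}$ for some fixed pair of values $p\neq q$. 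Unwinding the definition of a name, this says exactly that $\{s_m:m\in I\}$ is an independence set of size $\geq\lambda n$ for $(U_p,U_q)$. Since there are only finitely many pairs $(p,q)$, pigeonholing over the subsequence yields a single pair of cover elements carrying arbitrarily large finite independence sets.

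It remains to produce an \emph{off-diagonal pair of points}, and this is the step I expect to be the main obstacle. The naive extraction---take witnesses in $U_p$ and $U_q$ and pass to a limit---can collapse onto the diagonal when the two cover elements are close, and one cannot merely shrink the mesh, since shrinking degrades $\lambda=\lambda(c,k)$ as $k$ grows. The correct route, following \cite{kerrtopological}, is to upgrade the conclusion to a pair of \emph{disjoint closed sets} $(A_0,A_1)$ carrying arbitrarily large independence sets, and then to invoke compactness together with the fact that the set of pairs whose every neighbourhood pair has arbitrarily large independence sets is closed and $T$-invariant. Choosing $x\in A_0$ and $y\in A_1$ as limits of witnesses gives $x\neq y$ because $A_0\cap A_1=\emptyset$, while the required local independence at $(x,y)$ is inherited from that of $(A_0,A_1)$. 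Thus $(x,y)$ is a non-diagonal pair of the desired form, completing the proof.
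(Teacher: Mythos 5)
First, a point of reference: the paper does not prove this statement at all --- Theorem \ref{kerr} is quoted from \cite{kerrtopological}, so your attempt has to be measured against Kerr--Li's argument rather than against anything in the text. Your first direction is correct and complete: the two-element cover $\{X\setminus\overline{U_x},\,X\setminus\overline{U_y}\}$, the observation that each witness point $z_v$ lies in exactly one element of the join so that $N\bigl(\bigvee_{i\in F}T^{-i}\mathcal{V}\bigr)\geq 2^{|F|}$, and the concatenation of independence sets $F_j$ with $n_j/N_j\to 1$ to manufacture a single sequence of positive entropy are all sound.

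The converse direction, however, has a genuine structural gap, and it sits exactly where you flagged it. Your pipeline applies the Karpovsky--Milman lemma to the realizable names of the cover $\mathcal{U}$, obtaining a pair of \emph{cover elements} $(U_p,U_q)$ with arbitrarily large finite independence sets, and then proposes to ``upgrade'' to disjoint closed sets. That intermediate statement is provably too weak to finish from: if $U_p\cap U_q\neq\emptyset$ and contains a recurrent point $z$, then every finite subset of the return times of $z$ to $U_p\cap U_q$ is an independence set for $(U_p,U_q)$, since $z$ itself witnesses every pattern. Thus even an irrational rotation, with the circle covered by two overlapping arcs, satisfies your intermediate conclusion while being null; no argument whatsoever can proceed from that conclusion to a non-diagonal pair. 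The fix is not an extraction trick but a different application of the combinatorics: in \cite{kerrtopological} the counting is done against \emph{complement} data (for a two-element open cover $\{V_1,V_2\}$ the complements $V_1^{c},V_2^{c}$ are automatically disjoint closed sets, and the overlap region must be carried as an extra symbol in the alphabet), so that the Karpovsky--Milman step outputs independence for a pair of disjoint sets directly. After that, passing from a pair of disjoint closed sets with arbitrarily large independence sets to a pair of \emph{points} is itself nontrivial: it uses Kerr--Li's localization, namely a subdivision lemma (if $(A_0,A_1)$ has arbitrarily large finite independence sets and $A_0=A_0'\cup A_0''$, then one of $(A_0',A_1)$, $(A_0'',A_1)$ does too --- another application of the same combinatorial lemma), iterated along shrinking closed covers and combined with compactness. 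The fact you invoke instead --- that the set of pairs with the independence property is closed and $T$-invariant --- is true but inert here; it produces no points. So the second half of your proposal defers precisely the content of the theorem.
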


The following example shows there are transitive non-null mean equicontinuous systems.

\begin{example}
\label{notnull}Let $S=\left\{  2^{n}\right\}  _{n=1}^{\infty}$,
\[
Y:=\left\{  x\in\left\{  0,1\right\}  ^{\mathbb{Z}_{+}}:x_{i}=0\text{ if
}i\notin S\right\}  ,
\]
and $X$ the shift-closure of $Y.$ For every $x,y\in X$ we have $d_{b}(x,y)=0,
$ hence $(X,\sigma)$ is mean equicontinuous. Nonetheless, since $S$ is an
infinite independence set for $\left(  \left\{  x_{0}=0\right\}  ,\left\{
x_{0}=1\right\}  \right)  $ we conclude that $(X,\sigma)$ is not null.
\end{example}

A $\mathbb{Z}_{+}-$subshift is \textbf{Toeplitz} if and only if it is the
orbit closure of a \textbf{regularly recurrent point}, i.e. $x\in X$ such that
for every $j>0$ there exists $m>0$ such that $x_{j}=x_{j+im}$ for all
$i\in\mathbb{Z}_{+}.$ Toeplitz subshifts are precisely the minimal subshifts
that are almost 1-1 extensions of odometers (for $\mathbb{Z}_{+}\mathbb{-}%
$actions see \cite{surveyodometers}, for finitely generated discrete group
actions see \cite{godometers}).

Given a Toeplitz subshift and a regularly recurrent point $x\in X,$ there
exists a set of pairwise disjoint arithmetic progressions $\left\{
S_{n}\right\}  _{n\in\mathbb{N}}$ (called the \textbf{periodic structure})
such that $\cup_{n\in\mathbb{N}}S_{n}=\mathbb{Z}_{+}$, $x_{i}$ is constant for
every $i\in$ $S_{n},$ and every $S_{n}$ is maximal in the sense that there is
no larger arithmetic progression where $x_{i}$ is constant$.$ Let $x $ be a
regularly recurrent point and $X$ the orbit closure. We have that $(X,\sigma)$
is regular if and only if $\sum\limits_{n\in\mathbb{N}}\underline{D}(S_{n})=1$
(see \cite{surveyodometers})$.$

The following example shows that the converse of Corollary \ref{nullquasi}
does not hold.

\begin{example}
\label{quasinotnull}There exists a regular Toeplitz subshift (hence diam-mean
equicontinuous) with positive sequence entropy.
\end{example}

\begin{proof}
For every $n\in\mathbb{N}$ let $w^{n}$ be a finite word that contains all
binary words of size $n.$ We denote the concatenation of $w^{n}$ by
$w^{n,\infty}\in\left\{  0,1\right\}  ^{\mathbb{Z}_{+}}.$

We define the sequence $\left\{  j_{n}\right\}  \subset\mathbb{N}$ inductively
with $j_{1}=0$ and $j_{n+1}:=\min\left\{  \cup_{m\leq n}\cup_{k\in\mathbb{N}%
}\left\{  k2^{m}+j_{m}\right\}  \right\}  ^{c}.$

Let $x\in\left\{  0,1\right\}  ^{\mathbb{Z}_{+}}$ be the point such that for
every $n\in\mathbb{N}$ we have that $x_{j_{n}+i2^{n}}=w_{i}^{n,\infty}$ for
all $i\in\mathbb{Z}_{+}.$

We define $X$ as the orbit closure of $x.$ Since $x$ is regularly recurrent we
obtain that $X$ is a Toeplitz subshift (hence almost automorphic). By using
the condition for regularity using the periodic structure (see comment before
this proposition) we obtain that $(X,\sigma)$ is regular. Hence by Theorem
\ref{aa} we get that $(X,\sigma)$ is diam-mean equicontinuous.

On the other hand $w^{k}$ contains all the binary words of size $k$. This
implies there exists arbitrarily long independence sets for $\left(  \left\{
x_{0}=0\right\}  ,\left\{  x_{0}=1\right\}  \right)  .$ Using Theorem
\ref{kerr} we conclude $(X,\sigma)$ is not null.
\end{proof}

Another class of rigid TDS are the tame systems introduced in
\cite{kohler1995enveloping} and \cite{Glasner_ontame}. These systems were
characterized in \cite{kerrtopological} similarly to Theorem \ref{kerr} but
with infinitely large independence sets. This means that Example \ref{notnull}
is also not tame (note it is not minimal). The example in Section 11 in
\cite{kerrtopological} is a tame non-regular Toeplitz subshift; this means
there are tame minimal systems that are not diam-mean equicontinuous.
Nonetheless we do not know if every minimal mean equicontinuous system is tame.

\section{\bigskip Amenable semigroup actions}

All of our results can be stated for more general group actions. In this
section we state the generality of the results. All the results need
amenability and all the results hold for countable abelian discrete actions.

Amenable groups are usually defined with invariant means (see
\cite{pier1984amenable}). We give an equivalent definition that is more useful
for our paper.

\begin{definition}
\bigskip Let $\left(  \mathbb{G}\text{,}+\right)  $ be a locally compact
semigroup that has an invariant measure $\nu.$ We say $\mathbb{G}$ is
\textbf{amenable} if there exists a\textbf{\ F\o lner sequence, }i.e. a
sequence of measurable sets with finite measure $\left\{  F_{n}\right\}
\subset\mathbb{G}$, such that for any $i\in\mathbb{G}$ we have that%
\[
\lim_{n\rightarrow\infty}\frac{\nu(\left(  i+F_{n}\right)  \bigtriangleup
F_{n})}{\nu(F_{n})}=0.
\]

\end{definition}

If the group is countable then any invariant measure is a counting measure. In
this case if $F$ is compact then $F$ is finite and $\nu(F)=\left\vert
F\right\vert .$

A semigroup $\left(  \mathbb{G}\text{,}+\right)  $ is \textbf{left
cancellative} if whenever $a+b=a+c$ we have that $b=c.$ Every group is a left
cancellative semigroup.

Throughout this section we assume $\left(  \mathbb{G}\text{,}+\right)  $ is an
amenable left cancellative locally compact semigroup with identity ($0$) that
has an invariant measure $\nu$ (a group is always left cancellative and there
exists (left) invariant measures known as Haar measures).

From now on $\mathbb{G}$ implicitly represents a pair consisting of a
semigroup and a F\o lner sequence.

Abelian semigroups are amenable. In particular $\mathbb{Z}_{+}^{d},$ and
$\mathbb{R}_{+}^{d}$ are amenable (and left cancellative); in these cases we
associate the cubes $\left[  0,n\right]  ^{d}$ as the F\o lner sequence. For
$\mathbb{Z}^{d},$ and $\mathbb{R}^{d}$ we associate $\left[  -n,n\right]
^{d}.$

\begin{definition}
Let $S\subset\mathbb{G}$. We define \textbf{lower density} as%
\[
\underline{D}(S):=\liminf_{n\rightarrow\infty}\frac{\nu(S\cap F_{n})}%
{\nu(F_{n})},
\]
and \textbf{upper density }as
\[
\overline{D}(S):=\limsup_{n\rightarrow\infty}\frac{\nu(S\cap F_{n})}{\nu
(F_{n})}.
\]

\end{definition}

\begin{lemma}
Let $S,S^{\prime}\subset\mathbb{G}$, $i\in\mathbb{G}$, and $F\subset
\mathbb{G}$ a finite set$.$ We have that

$\cdot\underline{D}(S)=\underline{D}(i+S)$ and $\overline{D}(S)=\overline
{D}(i+S).$

$\cdot$\underline{$D$}$(S)+\overline{D}(S^{c})=1.$

$\cdot$If $\underline{D}(S)+\overline{D}(S^{\prime})>1$ then $S\cap S^{\prime
}\neq\emptyset.$

$\cdot$If $\mathbb{G}$ is countable%
\begin{align*}
\underline{D}(S)  &  :=\liminf_{n\rightarrow\infty}\frac{\left\vert S\cap
F_{n}\diagdown F\right\vert }{\left\vert F_{n}\diagdown F\right\vert },\text{
and}\\
\overline{D}(S)  &  :=\limsup_{n\rightarrow\infty}\frac{\left\vert S\cap
F_{n}\diagdown F\right\vert }{\left\vert F_{n}\diagdown F\right\vert }.
\end{align*}

\end{lemma}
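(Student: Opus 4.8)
The plan is to follow the proof of Lemma \ref{basico} essentially verbatim, replacing the counting measure $|\cdot|$ by the invariant measure $\nu$, and to isolate the one place where countability was previously used for free but now must be supplied by the F\o lner condition. I would dispatch the four bullets in order, since the third depends on the second and the fourth is a finite-correction estimate.

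The only real work is the first bullet (translation invariance), and this is where the F\o lner property enters. Since $\mathbb{G}$ is left cancellative, the map $x\mapsto i+x$ is injective, and since $\nu$ is invariant we have $\nu(i+A)=\nu(A)$ for every measurable $A$. First I would record the identity $i+(S\cap F_{n})=(i+S)\cap(i+F_{n})$, which holds because left cancellation forces $i+s=i+f$ to imply $s=f$; invariance then gives $\nu((i+S)\cap(i+F_{n}))=\nu(S\cap F_{n})$. The point to notice is that, because $\mathbb{G}$ has no inverses, one cannot simply translate $F_{n}$ back; instead I compare $(i+S)\cap F_{n}$ with $(i+S)\cap(i+F_{n})$. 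These two sets differ only within $F_{n}\bigtriangleup(i+F_{n})$, so
\[
\left\vert \nu((i+S)\cap F_{n})-\nu(S\cap F_{n})\right\vert \leq\nu(F_{n}\bigtriangleup(i+F_{n})).
\]
Dividing by $\nu(F_{n})$ and invoking $\nu(F_{n}\bigtriangleup(i+F_{n}))/\nu(F_{n})\to0$, the normalized sequences $\nu((i+S)\cap F_{n})/\nu(F_{n})$ and $\nu(S\cap F_{n})/\nu(F_{n})$ differ by a null sequence and therefore share the same $\liminf$ and $\limsup$; this yields $\underline{D}(i+S)=\underline{D}(S)$ and $\overline{D}(i+S)=\overline{D}(S)$.

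The remaining bullets are routine. For the second, since $\nu(F_{n})<\infty$ and $\nu(S\cap F_{n})+\nu(S^{c}\cap F_{n})=\nu(F_{n})$, the two normalized quantities sum to $1$ for every $n$; writing $a_{n}=\nu(S\cap F_{n})/\nu(F_{n})$, the elementary identity $\liminf a_{n}=1-\limsup(1-a_{n})$ gives $\underline{D}(S)+\overline{D}(S^{c})=1$. The third bullet is then the contrapositive: if $S\cap S^{\prime}=\emptyset$ then $S^{\prime}\subseteq S^{c}$, so monotonicity of $\overline{D}$ together with the second bullet gives $\underline{D}(S)+\overline{D}(S^{\prime})\leq\underline{D}(S)+\overline{D}(S^{c})=1$, contradicting the hypothesis.

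For the fourth bullet I would use that countability of $\mathbb{G}$ makes $\nu$ the counting measure, so $\nu=|\cdot|$, together with $|F_{n}|\to\infty$ (valid for the infinite $\mathbb{G}$ under consideration). Since $F$ is finite, both $|S\cap F_{n}\cap F|$ and $|F_{n}\cap F|$ are bounded by $|F|$, so after dividing by $|F_{n}|$ they vanish; writing $|S\cap F_{n}\setminus F|/|F_{n}\setminus F|=(|S\cap F_{n}|-|S\cap F_{n}\cap F|)/(|F_{n}|-|F_{n}\cap F|)$ shows this ratio differs from $|S\cap F_{n}|/|F_{n}|$ by a vanishing amount, so the $\liminf$ and $\limsup$ are unchanged. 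The main obstacle is thus entirely in the first bullet: handling the absence of inverses in a semigroup and routing translation invariance through the F\o lner estimate rather than through an exact change of variables.
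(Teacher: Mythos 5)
Your proposal is correct and follows essentially the same route as the paper: an exact change of variables via left cancellation and invariance of $\nu$ (giving $\nu(S\cap F_{n})=\nu((i+S)\cap(i+F_{n}))$), followed by the F\o lner property to pass between $F_{n}$ and $i+F_{n}$, with the remaining bullets treated as routine. In fact you spell out the symmetric-difference estimate that the paper's final equality leaves implicit, so your write-up is if anything more complete than the paper's.
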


\begin{proof}
To prove the first property we have that
\begin{align*}
\underline{D}(S)  &  =\liminf_{n\rightarrow\infty}\frac{\nu(S\cap F_{n})}%
{\nu(F_{n})}\\
&  =\liminf_{n\rightarrow\infty}\frac{\nu(-i+i+S\cap F_{n})}{\nu(F_{n})}\\
&  =\liminf_{n\rightarrow\infty}\frac{\nu(i+S\cap i+F_{n})}{\nu(F_{n})}\\
&  =\liminf_{n\rightarrow\infty}\frac{\nu(i+S\cap i+F_{n})}{\nu(i+F_{n})}%
\frac{\nu(i+F_{n})}{\nu(F_{n})}\\
&  =\underline{D}(i+S).
\end{align*}
The other properties are also easy to show.
\end{proof}

A $\mathbb{G}$\textbf{\ -measure preserving transformation ($\mathbb{G-}$MPT)}
is a triplet $(M,\mu,T)$ where $(M,\mu)$ is a probability space and
$T:=\left\{  T^{i}:i\in\mathbb{G}\right\}  $ is a $\mathbb{G}-$ measure
preserving action on $M.$ When we say a system is ergodic it means it is
measure preserving and ergodic.

For some of the results in this section we will assume the pointwise ergodic
theorem holds.

Let $(M,\mu,T)$ be an ergodic $\mathbb{G}-$MPT. Let $A$ be a $\mu-$measurable
set. We say $x\in M$ is a \textbf{generic point for }$\mathbf{A}$%
\textbf{\ }if
\[
\lim_{n\rightarrow\infty}\frac{\nu\left\{  i\in F_{n}:T^{i}x\in A\right\}
}{\nu(F_{n})}=\mu(A).
\]

\bigskip We say a $\mathbb{G}-$MPT satisfies the \textbf{pointwise ergodic
theorem} if for every measurable set $A$ almost every point is a generic point
for $A$. The pointwise ergodic theorem was originally proved for
$\mathbb{Z}_{+}\mathbb{-}$systems by Birkhoff. It also holds if $\mathbb{G=Z}%
_{+}^{d}$ \cite{gorodnik2009ergodic}$.$ Every second countable amenable group
has a F\o lner sequence that satisfies the pointwise ergodic theorem
\cite{lindenstrauss2001pointwise} (note that this is not satisfied for every
F\o lner sequence). For other conditions when this holds see Chapter 6.4 in
\cite{krengel1985ergodic}.

\bigskip

\subsection{Results that hold for any $\mathbb{G}$}

The following results hold for any $\mathbb{G}$ (that is amenable left
cancellative locally compact with identity ($0$)): Theorem \ref{meaneq},
Theorem \ref{strongsensitive}, Proposition \ref{isomorphism}, Theorem
\ref{sensipairs}, Theorem \ref{discrete}, Theorem \ref{dichotomy}, and Lemma
\ref{abc}.

\bigskip

The proofs of Theorem \ref{meaneq}, Theorem \ref{strongsensitive}, Proposition
\ref{isomorphism}, Theorem \ref{sensipairs}, Theorem \ref{dichotomy} are identical.

\bigskip Lemma \ref{abc} also holds in general but the proof is slightly different.

\begin{lemma}
Let $K$ be a finite set, $\varepsilon>0,$ and $h:K\rightarrow2^{\mathbb{G}} $
such that $\underline{D}(h(k))>\varepsilon$ for every $k\in F.$ There exist
$K^{\prime}\subset K$ and $i\in\mathbb{G}$ with $\left\vert K^{\prime
}\right\vert \geq\varepsilon\left\vert K\right\vert /2$ such that $i\in h(k)$
for every $k\in K^{\prime}.$
\end{lemma}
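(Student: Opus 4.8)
The plan is to replicate the argument for Lemma \ref{abc} verbatim in structure, replacing the counting measure on $\mathbb{Z}_+^d$ by the invariant measure $\nu$ and the discrete pigeonhole sum by a measure-theoretic averaging argument. First I would use that $K$ is finite together with the definition of lower density to fix a single F\o lner set: since $\underline{D}(h(k))>\varepsilon$ for every $k\in K$, for each such $k$ there is $N_k$ with $\nu(h(k)\cap F_n)/\nu(F_n)\geq\varepsilon/2$ for all $n\geq N_k$, and taking $n_0=\max_{k\in K}N_k$ produces one index $n_0$ that works simultaneously for all $k\in K$.

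Next I would count with an integral instead of a sum. Setting $g(j):=\left\vert\left\{k\in K:\ j\in h(k)\right\}\right\vert=\sum_{k\in K}1_{h(k)}(j)$, linearity of the integral (the measure-theoretic analogue of the double counting $\sum_j\vert\{k:j\in h(k)\}\vert=\sum_k\vert\{j:j\in h(k)\}\vert$ used in the $\mathbb{Z}_+^d$ case) gives $\int_{F_{n_0}}g\,d\nu=\sum_{k\in K}\nu(h(k)\cap F_{n_0})\geq \frac{\varepsilon}{2}\,\vert K\vert\,\nu(F_{n_0})$.

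The averaging step then replaces ``there exists $j\in F_{n_0}$'' by ``there is a positive-measure set of such $j$'': were $g(j)<\frac{\varepsilon}{2}\vert K\vert$ for $\nu$-a.e. $j\in F_{n_0}$, integrating would contradict the previous bound, so the set $\left\{j\in F_{n_0}:g(j)\geq\frac{\varepsilon}{2}\vert K\vert\right\}$ has positive $\nu$-measure and in particular is non-empty. Choosing any $i$ in it and putting $K'=\left\{k\in K:i\in h(k)\right\}$ yields $\vert K'\vert=g(i)\geq\varepsilon\vert K\vert/2$ with $i\in h(k)$ for all $k\in K'$, as required.

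I expect the only genuine subtlety to be bookkeeping rather than a real obstacle. One must know that each $h(k)$ is $\nu$-measurable --- which is implicit, since $\underline{D}(h(k))$ is already assumed to be defined --- so that $g$ is measurable and the interchange $\int_{F_{n_0}}\big(\sum_k 1_{h(k)}\big)\,d\nu=\sum_k\nu(h(k)\cap F_{n_0})$ is justified by finite linearity (no Fubini across $\mathbb{G}\times\mathbb{G}$ is needed). The passage from the discrete pigeonhole ``a point $j$ exists'' to the measure-theoretic ``a positive-measure set exists, pick a point'' is the one place where the proof departs from the $\mathbb{Z}_+^d$ version, and it is precisely what lets the statement survive the move to a general amenable $\mathbb{G}$.
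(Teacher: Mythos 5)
Your proof is correct, but it follows a genuinely different route from the paper's proof of this general-$\mathbb{G}$ version. The paper keeps the argument combinatorial: after fixing the same $n_{0}$, it introduces a finite family $\mathcal{B}$ of disjoint subsets of $F_{n_{0}}$, all of the same $\nu$-measure, chosen so that each $h(k)\cap F_{n_{0}}$ corresponds to a subfamily $\mathcal{B}_{k}\subset\mathcal{B}$, and then reruns the discrete double count of the $\mathbb{Z}_{+}^{d}$ case with the cells of $\mathcal{B}$ playing the role of the elements of $F_{n_{0}}$: $\sum_{B\in\mathcal{B}}\left\vert\{k\in K:B\subset h(k)\}\right\vert=\sum_{k\in K}\left\vert\{B\in\mathcal{B}:B\subset h(k)\}\right\vert\geq\frac{\varepsilon}{2}\left\vert K\right\vert\left\vert\mathcal{B}\right\vert$, so some cell $B$ lies in $h(k)$ for at least $\frac{\varepsilon}{2}\left\vert K\right\vert$ values of $k$, and any $i\in B$ works. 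Your integration argument replaces that decomposition by the linearity bound $\int_{F_{n_{0}}}\sum_{k}1_{h(k)}\,d\nu\geq\frac{\varepsilon}{2}\left\vert K\right\vert\nu(F_{n_{0}})$ plus averaging. What your route buys: it sidesteps the construction of $\mathcal{B}$, which is precisely the delicate point of the paper's proof --- the existence of an equal-measure family adapted to all the $h(k)$ needs justification (non-atomicity of $\nu$, or discreteness as in the counting case), and as written the paper only matches measures, $\nu(h(k)\cap F_{n_{0}})=\nu(\cup_{B\in\mathcal{B}_{k}}B)$, whereas its count actually uses the containments $B\subset h(k)$; your argument needs none of this. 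What the paper's route buys: a literal reduction to the already-proved discrete pigeonhole lemma. One step of yours should be tightened: from ``$g(j)<\frac{\varepsilon}{2}\left\vert K\right\vert$ for a.e.\ $j$'' you infer a strict inequality of integrals, which is not automatic for general functions; here it is justified because $g=\sum_{k}1_{h(k)}$ is integer-valued (a simple function), so $g<c$ a.e.\ forces $g\leq\lceil c\rceil-1<c$ a.e., and then $\int_{F_{n_{0}}}g\,d\nu<c\,\nu(F_{n_{0}})$ since $0<\nu(F_{n_{0}})<\infty$. With that observation, and your (correct) remark that measurability of each $h(k)\cap F_{n}$ is already implicit in the hypothesis that $\underline{D}(h(k))$ is defined, the argument is complete.
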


\begin{proof}
There exists $n_{0}\in\mathbb{N}$ such that%
\[
\frac{\nu(h(k)\cap F_{n_{0}})}{\nu(F_{n_{0}})}\geq\varepsilon/2\text{
}\ \ \text{for every }k\in K.
\]

Let $\mathcal{B}$ be a finite family of disjoint subsets of $F_{n_{0}},$ such
that $\nu(B)$ is constant for every $B\in\mathcal{B}$ and for every $k\in K$
there exists $\mathcal{B}_{k}\subset\mathcal{B}$ such that
\[
\nu(h(k)\cap F_{n_{0}})=\nu(\cup_{B\in\mathcal{B}_{k}}B).
\]

This means that
\[
\sum_{B\in\mathcal{B}}\left\vert k\in K:B\subset h(k)\right\vert =\sum
_{k\in\mathcal{K}}\left\vert B\in\mathcal{B}:B\subset h(k)\right\vert
\geq\frac{\varepsilon}{2}\left\vert K\right\vert \left\vert \mathcal{B}%
\right\vert .
\]

\end{proof}

Using this we obtain Theorem \ref{seqentr}, and Corollary \ref{nullquasi} for
any $\mathbb{G}$ such that $(G_{eq},\nu_{eq},T_{eq})$ satisfies the pointwise
ergodic theorem.

A $\mathbb{G-}$measure preserving transformation on a probability space
$(M,\mu,T)$ generates a family of unitary linear operator on the Hilbert space
$L^{2}(M,\mu),$ by $U_{T^{i}}:f\mapsto f\circ T^{i}.$ We say $(M,\mu,T)$ has
\textbf{discrete spectrum} if $L^{2}(M,\mu)$ is the direct sum of finite
dimensional $U_{T}$-invariant subspaces. Mackey proved Halmos and Von Neumann
holds for locally compact group actions \cite{mackey1964ergodic}. This implies
Theorem \ref{discrete} holds for any $\mathbb{G}$.

\subsection{Results that hold for countable discrete abelian $\mathbb{G}$}

The following results hold for countable discrete abelian $\mathbb{G}$:
Corollary \ref{principal}, Theorem \ref{aa}, and Corollary \ref{nullquasi}.
The proof of Proposition \ref{entropy} uses a result from
\cite{KerrMeasurable} that holds for discrete actions, and the pointwise
ergodic theorem. Also it uses the fact that the action is abelian. This
implies this result holds for countable discrete abelian $\mathbb{G}$ that
satisfy the pointwise ergodic theorem.

For $\mathbb{Z}_{+}$-actions, Kushnirenko proved that an ergodic system is
$\mu-$null if and only if it has discrete spectrum \cite{0036-0279-22-5-R02}.
This result was generalized for discrete actions by Kerr-Li in
\cite{KerrMeasurable}. With this we obtain that Corollary \ref{principal}
holds for discrete countable abelian actions that satisfy the pointwise
ergodic theorem.

Generalized shift systems are defined for countable groups. Theorem \ref{aa}
holds whenever $\mathbb{G}$ is countable.

If $\mathbb{G}$ is an abelian group then every minimal null $\mathbb{G}-$TDS
is almost automorphic (see \cite{huang2003null} and \cite{huang2006tame}).
This implies Corollary \ref{nullquasi} holds for countable abelian groups.

\bibliographystyle{aabbrv}
\bibliography{acompat,camel}

\end{document}